\newtheorem{theorem}{Theorem}[section]
\newtheorem{corollary}[theorem]{Corollary}
\newtheorem{lemma}[theorem]{Lemma}
\theoremstyle{definition}
\newtheorem{definition}[theorem]{Definition}
\newtheorem{example}[theorem]{Example}
\DeclareMathOperator{\Ap}{Ap}
\DeclareMathOperator{\F}{F}
\DeclareMathOperator{\g}{g}
\DeclareMathOperator{\e}{e}
\DeclareMathOperator{\m}{m}
\title{Numerical Semigroups Generated by Quadratic Sequences}
\author{Mara Hashuga \and Megan Herbine \and Alathea Jensen}
\begin{document}

\maketitle

\begin{abstract}
We investigate numerical semigroups generated by any quadratic sequence with initial term zero and an infinite number of terms.  We find an efficient algorithm for calculating the Ap\'ery set, as well as bounds on the elements of the Ap\'ery set.  We also find bounds on the Frobenius number and genus, and the asymptotic behavior of the Frobenius number and genus.  Finally, we find the embedding dimension of all such numerical semigroups.
\end{abstract}

\section{Introduction}

The investigation of numerical semigroups generated by particular kinds of sequences dates back to at least 1942, when Brauer \cite{brauer42} found the Frobenius number for numerical semigroups generated by sequences of consecutive integers.  Roberts \cite{roberts56} followed in 1956 with the Frobenius number of numerical semigroups generated by generic arithmetic sequences.

It might seem natural that after conquering arithmetic sequences, work would proceed apace on other common types of sequences, especially geometric sequences and polynomial sequences, which are the other two types of sequences most frequently encountered in mathematics education.  However, this was not the case.

Instead, reseachers such as Lewin \cite{lewin75} and Selmer \cite{selmer77} turned their attention to generalized arithmetic sequences---sequences which are arithmetic except for one term.  Work on generalized arithmetic sequences continues to the present day, in, for example, \cite{omidali12}, \cite{chapman17}, and \cite{lee19}.

Work on geometric sequences did not appear in the literature until 2008, when Ong and Ponomarenko \cite{ong08} found the Frobenius number of a numerical semigroup generated by a geometric sequence.  Work on generalized geometric sequences, called compound sequences, also continues to the present day, in, for example, \cite{kiers16}.  Some work has also been done on other, more exotic types of sequences, such as the Fibonacci sequence \cite{marin07}, sequences of repunits \cite{rosales16}, sequences of Mersenne numbers \cite{rosales17}, and sequences of Thabit numbers \cite{rosales15}.

Only a very small amount of work has appeared on numerical semigroups generated by polynomial sequences, and only for particular instances of polynomials, not for generic polynomials.  This includes numerical semigroups generated by three consecutive squares or cubes \cite{lepilov15}, infinite sequences of squares \cite{moscariello15}, and sequences of three consecutive triangular numbers or four consecutive tetrahedral numbers \cite{perez18}.  In \cite{jurgen17}, the authors tantalizingly defined something called a quadratic numerical semigroup; however, the quadratic object in question is an associated algebraic ideal, not a sequence of generators.

Thus, to date, no one has investigated the numerical semigroups generated by a generic quadratic sequence, a generic cubic sequence, nor any generic polynomial sequence of higher degree.  This work is important not only because these are common sequences worthy of investigation in their own right, but also because every numerical semigroup is generated by a subset of a polynomial sequence of sufficient degree.  This is so because a polynomial formula can be fitted to any finite set of numbers.  Hence, an understanding of numerical semigroups generated by polynomial sequences would contribute to the understanding of all numerical semigroups.

In this article, we begin the investigation of numerical semigroups generated by generic quadratic sequences, and lay out a framework for its continuation.

\section{Background: Numerical Semigroups}

In this section, we will define the most important objects and parameters associated with numerical semigroups, as well as common facts about these objects, given here as lemmas.  These definitions and lemmas are taken from the standard reference text \cite{rosales09}.

Before we begin, it is very important to note that throughout this article, we will use $\mathbb{N}$ to denote $\mathbb{N}=\Set{1,2,3,\ldots}$, and $\mathbb{N}_0$ to denote $\mathbb{N}_0=\Set{0,1,2,3,\ldots}$.

A \textbf{monoid} is a set $M$, together with a binary operation $+$ on $M$, such that $+$ is closed, associative, and has an identity element in $M$.  A subset $N$ of $M$ is a \textbf{submonoid} of $M$ if and only if $N$ is also a monoid using the same operation as $M$.

Given a monoid $M$ and a subset $A$ of $M$, the smallest submonoid of $M$ containing $A$ is 
\[\langle A\rangle =\Set{\lambda_{1}a_1+\ldots+\lambda_na_n | n\in \mathbb{N}_0, \lambda_1, \ldots, \lambda_n \in \mathbb{N}_0 \text{ and } a_1, \ldots,a_n \in A}.\]
The elements of $A$ are called \textbf{generators} of $\langle A\rangle$ or a \textbf{system of generators} of $\langle A\rangle$, and we accordingly say that $\langle A\rangle$ is \textbf{generated by} $A$.

Clearly, $\mathbb{N}_0$ is a monoid under the standard addition operation.  A submonoid $S$ of $\mathbb{N}_0$ is a \textbf{numerical semigroup} if and only if it has a finite complement in $\mathbb{N}_0$.

\begin{lemma}
Let $A$ be a nonempty subset of $\mathbb{N}_0$. Then $\langle A\rangle$ is a numerical semigroup if and only if $\gcd(A)=1$.
\end{lemma}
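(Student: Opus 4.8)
The plan is to prove the biconditional by treating its two directions separately, noting first that $\langle A\rangle$ is automatically a submonoid of $\mathbb{N}_0$: it is closed under addition, inherits associativity, and contains $0$ as the empty sum. Hence the entire content of the statement lies in relating the finite-complement condition to the value of $\gcd(A)$.

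For the forward direction I would argue by contrapositive. Suppose $d=\gcd(A)>1$. Since every $a\in A$ is a multiple of $d$, every nonnegative integer combination of elements of $A$ is again a multiple of $d$, so $\langle A\rangle\subseteq d\mathbb{N}_0$. Then every positive integer not divisible by $d$---of which there are infinitely many---lies in the complement of $\langle A\rangle$, so $\langle A\rangle$ is not a numerical semigroup. This direction is routine.

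The substantive direction is to show that $\gcd(A)=1$ forces the complement of $\langle A\rangle$ to be finite. First I would reduce to a finite generating set: the gcd of a finite subset of $A$ can only decrease as more elements are adjoined, so this value stabilizes at $\gcd(A)$, and since $\gcd(A)=1$ there exist $a_1,\dots,a_k\in A$ with $\gcd(a_1,\dots,a_k)=1$. By B\'ezout's identity there are integers $z_1,\dots,z_k$ with $\sum_i z_i a_i=1$. Separating the positive and negative coefficients, I would write $P-Q=1$ where $P=\sum_{z_i>0}z_i a_i$ and $Q=-\sum_{z_i<0}z_i a_i$ are both nonnegative combinations of the $a_i$, hence both lie in $\langle A\rangle$. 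If $Q=0$ then $1\in\langle A\rangle$ and the complement is empty, so we may assume $Q\ge 1$.

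The key step---and the main obstacle---is bootstrapping from this single integer relation to a covering of all large integers, since B\'ezout only supplies a combination with possibly negative coefficients. I would exploit $P\equiv 1\pmod{Q}$ as follows: for each residue $r\in\{0,1,\dots,Q-1\}$ the element $rP$ lies in $\langle A\rangle$ and satisfies $rP\equiv r\pmod{Q}$, so every residue class modulo $Q$ contains an element of $\langle A\rangle$ of size at most $(Q-1)P$. Given any integer $n\ge(Q-1)P$, choosing $r$ with $r\equiv n\pmod{Q}$ and $0\le r<Q$ makes $n-rP$ a nonnegative multiple of $Q$, so $n=rP+tQ$ for some $t\ge 0$ is a nonnegative combination of $P$ and $Q$ and therefore lies in $\langle A\rangle$. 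Thus every integer at least $(Q-1)P$ belongs to $\langle A\rangle$, the complement is finite, and the proof closes.
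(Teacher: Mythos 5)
Your argument is correct, but there is nothing in the paper to compare it against: this lemma appears in the Background section as a quoted fact from the standard reference of Rosales and Garc\'ia-S\'anchez, and the authors give no proof of it. Taken on its own terms, your proof is sound and essentially the classical one. The contrapositive for the forward direction is fine, and the substantive direction is handled correctly: reducing to a finite subset with $\gcd$ equal to $1$, splitting the B\'ezout relation into $P-Q=1$ with $P,Q\in\langle A\rangle$, and then covering every residue class modulo $Q$ by the elements $0,P,2P,\dots,(Q-1)P$ so that every integer $n\geq (Q-1)P$ lies in $\langle P,Q\rangle\subseteq\langle A\rangle$. This is exactly the two-generator (Sylvester--Frobenius) covering argument, and all the inequalities check out, including the verification that $n-rP\geq (Q-1-r)P\geq 0$. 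One pedantic remark: in the contrapositive you only treat $\gcd(A)>1$, which omits the degenerate case $A=\{0\}$, where $\gcd(A)=0$; there $\langle A\rangle=\{0\}$ trivially fails to be a numerical semigroup, so a single sentence disposes of it, but strictly speaking the case split ``$\gcd(A)\neq 1$'' should cover it.
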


A system of generators of a numerical semigroup is said to be \textbf{minimal} if and only if none of its proper subsets generate the numerical semigroup.

\begin{lemma}
Every numerical semigroup has a unique, finite, minimal system of generators.  Furthermore, any set which generates the numerical semigroup contains this minimal system of generators as a subset.
\end{lemma}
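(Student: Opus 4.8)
The plan is to exhibit the minimal system of generators explicitly as the set of \emph{irreducible} elements of $S$, and then verify it has the properties claimed: it generates $S$, it is contained in every generating set, and it is finite. Write $S^{\ast} = S \setminus \{0\}$ and define
\[ G = S^{\ast} \setminus (S^{\ast} + S^{\ast}), \qquad S^{\ast} + S^{\ast} = \Set{a + b | a, b \in S^{\ast}}, \]
so that $G$ consists of the nonzero elements of $S$ that cannot be written as a sum of two nonzero elements of $S$. The whole argument reduces to showing that this particular $G$ does the job.

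First I would show that $G$ generates $S$, by strong induction on the elements of $S^{\ast}$. If $s \in S^{\ast}$ lies in $G$ we are done; otherwise $s = a + b$ with $a, b \in S^{\ast}$, and since $a, b \geq 1$ we have $a, b < s$, so by the inductive hypothesis $a, b \in \langle G\rangle$ and hence $s = a + b \in \langle G\rangle$. Together with $0 \in \langle G\rangle$ this gives $S \subseteq \langle G\rangle$, and the reverse inclusion is immediate from $G \subseteq S$.

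Next I would prove the ``Furthermore'' clause, namely $G \subseteq A$ for every generating set $A$ of $S$, which also delivers minimality and uniqueness at once. Fix $g \in G$ and write $g = \lambda_1 a_1 + \cdots + \lambda_n a_n$ with each $a_i \in A \subseteq S^{\ast}$ and each $\lambda_i \in \mathbb{N}_0$; such an expression exists because $A$ generates $S$ and $g \in S^{\ast}$. The total coefficient $\lambda_1 + \cdots + \lambda_n$ cannot be $0$ (else $g = 0$), and if it were at least $2$ we could split the sum into two nonempty parts, each a sum of positive elements of $S$ and hence in $S^{\ast}$, exhibiting $g \in S^{\ast} + S^{\ast}$ and contradicting $g \in G$. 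So exactly one coefficient equals $1$ and the rest vanish, giving $g = a_i \in A$. From this containment, minimality of $G$ follows (a proper subset of $G$ that generated $S$ would have to contain $G$, a contradiction), as does uniqueness (any minimal generating set contains $G$ by this step, and since $G$ itself generates $S$ it cannot be a proper subset, so the minimal set equals $G$).

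Finally, finiteness is the step where the defining property of a \emph{numerical} semigroup is genuinely used, and I expect it to be the main point to get right: the generation and containment arguments above hold for any submonoid of $\mathbb{N}_0$, so the hypothesis of finite complement must enter here. Since $S$ has finite complement in $\mathbb{N}_0$, it contains a smallest positive element $m$. I would then show no two elements of $G$ are congruent modulo $m$: if $g_1, g_2 \in G$ with $g_1 < g_2$ and $g_2 \equiv g_1 \pmod m$, then $g_2 = g_1 + km$ for some $k \in \mathbb{N}$, writing $g_2$ as a sum of the nonzero elements $g_1$ and $km$ of $S$ and contradicting $g_2 \in G$. By the pigeonhole principle $\lvert G\rvert \leq m$, so $G$ is finite, which completes the proof.
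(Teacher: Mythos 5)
Your proof is correct. The paper itself does not prove this lemma---it is quoted as background from the standard reference \cite{rosales09}---and your argument is essentially the standard one found there: identify the minimal system as $G=S^{\ast}\setminus(S^{\ast}+S^{\ast})$, show it generates by strong induction, show it sits inside every generating set (which gives minimality and uniqueness for free), and get finiteness by observing that distinct elements of $G$ are incongruent modulo the multiplicity. One cosmetic slip: you write $a_i\in A\subseteq S^{\ast}$, but a generating set may contain $0$; just discard any terms with $a_i=0$ before running the coefficient-counting argument. Also, as you half-suspect, the finite-complement hypothesis is not really needed for finiteness---any submonoid of $\mathbb{N}_0$ other than $\{0\}$ has a smallest positive element, which is all your pigeonhole step uses---but invoking it does no harm.
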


The least element in the minimal system of generators of a numerical semigroup $S$ is called the \textbf{multiplicity} of $S$, and is denoted by $\m(S)$.  The cardinality of the minimal system of generators is called the \textbf{embedding dimension} of $S$ and is denoted by $\e(S)$.

\begin{lemma}\label{multembedlemma}
Let $S$ be a numerical semigroup. Then $\m(S)=\min(S \setminus \Set{0})$ and $\e(S) \leq \m(S)$.
\end{lemma}

The greatest integer not in a numerical semigroup $S$ is known as the \textbf{Frobenius number} of $S$ and is denoted by $\F(S)$.
The set of elements in $\mathbb{N}_0$ that are not in $S$ is known as the \textbf{gap set} of $S$, and is denoted by ${\rm G}(S)$.
The cardinality of the gap set is known as the \textbf{genus} of $S$ and is denoted by $\g(S)$.

The \textbf{Ap\'ery Set} of $n$ in $S$, where $n$ is a nonzero element of the numerical semigroup $S$, is
\[\Ap(S,n)=\Set{s\in S\mid s-n\notin S}\]

\begin{lemma}
Let $S$ be a numerical semigroup and let $n$ be a nonzero element of $S$. Then $\Ap(S,n)=\Set{0=w(0), w(1), \ldots ,w(n-1)}$, where $w(i)$ is the least element of $S$ congruent with $i$ modulo $n$, for all $i\in \Set{0, \ldots ,n-1}$.
\end{lemma}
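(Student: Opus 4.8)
The plan is to prove the final lemma, which characterizes the Apéry set $\Ap(S,n)$ as consisting of exactly one least element from each residue class modulo $n$. The statement has two parts that I would establish in sequence: first, that the set $\Set{w(0), w(1), \ldots, w(n-1)}$ of minimal-in-$S$ representatives of each residue class is well-defined and contained in $\Ap(S,n)$; and second, that $\Ap(S,n)$ contains nothing else, so the two sets coincide.

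First I would verify that each $w(i)$ is well-defined, i.e., that for every residue $i \in \Set{0, \ldots, n-1}$ there genuinely is a least element of $S$ in that class. This follows from the numerical semigroup property: since $S$ has finite complement in $\mathbb{N}_0$, each residue class modulo $n$ contains cofinitely many elements of $S$, so in particular it contains at least one, and any nonempty subset of $\mathbb{N}_0$ has a least element by well-ordering. Note $w(0)=0$ because $0 \in S$ is the least nonnegative integer congruent to $0$. Then I would show $w(i) \in \Ap(S,n)$ for each $i$: by definition of $\Ap(S,n)$, I must check that $w(i) - n \notin S$. Suppose for contradiction that $w(i) - n \in S$. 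Since $w(i) - n \equiv i \pmod{n}$ and $w(i) - n < w(i)$, this contradicts the minimality of $w(i)$ as the least element of $S$ in its residue class. (One should also confirm $w(i) - n \geq 0$ is handled correctly; if $w(i) - n < 0$ then certainly $w(i)-n \notin S \subseteq \mathbb{N}_0$, which is fine.)

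For the reverse containment, I would take an arbitrary $s \in \Ap(S,n)$ and show it equals $w(i)$ where $i \equiv s \pmod n$. By definition of $w(i)$ as the least element of $S$ in the class of $s$, we have $w(i) \leq s$. If $s \neq w(i)$, then $s > w(i)$, and since both lie in the same residue class modulo $n$, their difference $s - w(i)$ is a positive multiple of $n$, so $s - n \geq w(i)$. Because $s - n \equiv i \pmod n$ and $s - n \geq w(i) \geq 0$ lies in $S$ or not — here I would argue that $s-n$ lies in $S$: since $s-n$ is congruent to $i$ and is at least $w(i)$, I would invoke that every element of $S$ in this class that is at least the minimum should be reconsidered; more directly, the cleanest route is to note that $s \in S$ with $s - n \notin S$ forces $s$ to be the smallest element of its class, hence $s = w(i)$.

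The main obstacle I anticipate is the reverse-containment step, specifically justifying cleanly that $s - n \in S$ whenever $s > w(i)$ in the same class. The subtle point is that being congruent to $i$ and exceeding $w(i)$ does not by itself place an integer in $S$. The correct and cleanest argument avoids this pitfall entirely: rather than showing $s-n \in S$ for every non-minimal $s$, I would directly use the contrapositive of the first part. If $s \in \Ap(S,n)$ then $s - n \notin S$; I claim $s$ is the least element of its residue class in $S$. Indeed, any element $t \in S$ with $t \equiv s \pmod n$ and $t < s$ would satisfy $s - t$ being a positive multiple of $n$, hence $s - n \geq t$; writing $s = t + kn$ with $k \geq 1$ and using that $t \in S$ and $n \in S$ gives $s - n = t + (k-1)n \in S$, contradicting $s \in \Ap(S,n)$. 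Therefore $s$ is minimal in its class, so $s = w(i)$. This completes both containments, and I would conclude that the indexing yields exactly $n$ distinct elements, one per residue class.
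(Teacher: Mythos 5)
Your proof is correct: well-definedness of each $w(i)$ follows from the finiteness of $\mathbb{N}_0\setminus S$, the containment $w(i)\in\Ap(S,n)$ follows from minimality, and your final argument for the reverse containment (writing $s=t+kn$ with $k\geq 1$ so that $s-n=t+(k-1)n\in S$ by closure under addition, contradicting $s-n\notin S$) correctly sidesteps the pitfall you flagged in your middle paragraph. The paper states this lemma without proof, citing it as a standard fact from its reference text, and your argument is essentially the standard one given there.
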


There is no known general formula for the Frobenius number or the genus for numerical semigroups. However, we can compute both values if the Apéry set of any nonzero element of the semigroup is known.

\begin{lemma}\label{FgLemma}
Let $S$ be a numerical semigroup and let $n$ be a nonzero element of $S$. Then \[\F(S)=(\max{\Ap(S,n)})-n\]
and
\[\g(S)= \frac{1}{n} \left(\sum_{w\in \Ap(S,n)}w\right)- \frac{n-1}{2}\]
\end{lemma}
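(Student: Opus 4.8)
The plan is to prove both formulas in Lemma~\ref{FgLemma} using the characterization of the Apéry set from the preceding lemma, namely that $\Ap(S,n) = \Set{w(0), w(1), \ldots, w(n-1)}$ where $w(i)$ is the least element of $S$ congruent to $i$ modulo $n$. The key structural fact I would extract first is this: for each residue class $i$ modulo $n$, the elements of $S$ lying in that class are exactly $w(i), w(i)+n, w(i)+2n, \ldots$, while the elements of $\mathbb{N}_0 \setminus S$ in that class are exactly $i, i+n, \ldots$ up to (but not including) $w(i)$. This works because $w(i)$ is the \emph{least} element of $S$ in the class, and because $S$ is closed under addition by $n$ (so once we reach $w(i)$, every larger element of the class is in $S$). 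Establishing this ``staircase'' picture for each residue class is the real engine of the proof, and everything else follows by counting.

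For the Frobenius number, I would argue that the largest gap must be the largest element of $\mathbb{N}_0\setminus S$ across all residue classes. Within class $i$, the largest gap is $w(i)-n$ (the element just below $w(i)$), provided $w(i) \neq i$; and in fact $w(i)-n$ is always the largest non-element in its class, including the degenerate case. Taking the maximum over all classes, the overall Frobenius number is $\max_i (w(i)-n) = \left(\max \Ap(S,n)\right) - n$, since subtracting the constant $n$ commutes with taking the maximum. I would need to handle the $i=0$ class with a small remark, since $w(0)=0$ and that class contributes no gaps, but this is consistent with the formula as long as some other class realizes the maximum (which it does, since $S$ has finite complement and hence $\F(S)\geq 0$ unless $S=\mathbb{N}_0$).

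For the genus, the idea is to count the gaps class by class. In residue class $i$, the gaps are $i, i+n, \ldots, w(i)-n$, and there are exactly $\frac{w(i)-i}{n}$ of them. Summing over all $n$ residue classes gives
\[
\g(S) = \sum_{i=0}^{n-1} \frac{w(i)-i}{n} = \frac{1}{n}\left(\sum_{i=0}^{n-1} w(i)\right) - \frac{1}{n}\sum_{i=0}^{n-1} i.
\]
The first sum is precisely $\sum_{w \in \Ap(S,n)} w$, and the second evaluates to $\frac{1}{n}\cdot\frac{n(n-1)}{2} = \frac{n-1}{2}$, yielding the claimed formula.

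I expect the main obstacle to be stating the residue-class staircase cleanly and justifying the gap count $\frac{w(i)-i}{n}$ rigorously, since this hinges on $w(i)\equiv i \pmod n$ (so the difference is divisible by $n$) together with the minimality of $w(i)$ (so that nothing below it in the class lies in $S$). Once that counting lemma is nailed down, both formulas drop out by elementary summation, so the difficulty is concentrated entirely in the careful setup rather than in any subsequent computation.
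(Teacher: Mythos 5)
Your proof is correct: the residue-class ``staircase'' decomposition, the identification of $w(i)-n$ as the largest gap in each class, and the count of $\frac{w(i)-i}{n}$ gaps per class together give exactly the two stated formulas (these are the classical Selmer identities). Note that the paper offers no proof of its own here --- Lemma~\ref{FgLemma} is quoted as a standard fact from the reference text --- and your argument is precisely the standard one, so there is nothing to reconcile.
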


\section{Generating a Numerical Semigroup from a \\ Quadratic Sequence}

In this section, we will establish definitions and notation for the particular kind of numerical semigroups that we are investigating.

A \textbf{quadratic sequence} is a sequence whose terms are given by a quadratic function $y_n=c_0+c_1 n+c_2 n^2$, where $n\in\mathbb{N}_0$ and $c_0,c_1,c_2\in\mathbb{R}$.  Clearly, there are several associated parameters that will affect a numerical semigroup generated by a quadratic sequence: namely, the constants $c_0,c_1,c_2$, but also the number of terms from the quadratic sequence that are used as generators.

In this paper, we will only study numerical semigroups generated by infinite quadratic sequences with initial term $y_0=0$.  However, it would be interesting in future to study numerical semigroups generated by quadratic sequences in greater generality.

Below, we define precisely the numerical semigroups that we will investigate in this paper.

\begin{definition}
We say that a numerical semigroup $S$ is \textbf{generated by an infinite quadratic sequence with initial term zero} if and only if there exist some $c_0,c_1,c_2\in\mathbb{R}$ such that $y_n=c_0+c_1 n+c_2 n^2$ for all $n\in\mathbb{N}_0$ and $y_0=0$ and $S=\langle y_0,y_1,y_2,\ldots \rangle$.  We denote the set of all numerical semigroups generated by an infinite quadratic sequence with initial term zero by the name $\mathcal{Q}_0^\infty$.
\end{definition}

Clearly, since we are choosing to set $y_0=0$, we must have $c_0=0$.  We would now like to specify conditions on $c_1$ and $c_2$ that guarantee both that $S=\langle y_0,y_1,y_2,\ldots \rangle$ is a numerical semigroup, as well as that $S$ could be any numerical semigroup in $\mathcal{Q}_0^\infty$.

First and most obvious, we need for all the $y_n$ terms to be in $\mathbb{N}_0$ in order for $S$ to be a numerical semigroup.  It might be tempting to suppose that all the terms of the sequence $y_n$ are in $\mathbb{N}_0$ if and only if $c_0, c_1, c_2$ are in $\mathbb{N}_0$.  However, this is not the case.  For example, when $c_1=-1.5$ and $c_2=2.5$, $y_n\in\mathbb{N}_0$ for all $n\in\mathbb{N}_0$.

Because of this difficulty, we have chosen to express the formula for our quadratic sequence $y_n$ in quite a different form than $y_n=c_0+c_1 n+c_2 n^2$.  We will first define what it means for a sequence to be quadratic in an alternative manner, and then build up to the formula for $y_n$ from there.

It is a well known fact that a sequence $y_n$ is quadratic if and only if its sequence of first differences $y_{n}-y_{n-1}$ is arithmetic.  Moreover, a sequence of elements in $\mathbb{N}_0$ is quadratic if and only if its sequence of first differences is in $\mathbb{N}_0$ and is arithmetic.

We will use a sequence of first differences, called $x_n$, to define our quadratic sequence $y_n$.  Let $x_n=a+bn$, where $a,b\in\mathbb{N}_0$. Then, in terms of $x_n$, our quadratic sequence, which will be denoted $y_n$, is defined by $y_n=y_{n-1}+x_{n-1}$, with initial term $y_0=0$.

\begin{lemma}
If $y_0=0$, then $y_n=na+{n\choose 2}b$.
\end{lemma}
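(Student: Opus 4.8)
The plan is to prove the closed form by induction on $n$, using the recurrence $y_n = y_{n-1} + x_{n-1}$ together with the explicit formula $x_{n-1} = a + b(n-1)$. The underlying intuition is that the recurrence merely accumulates the first differences, so that $y_n = \sum_{k=0}^{n-1} x_k$; evaluating this sum directly would give $na + b\sum_{k=0}^{n-1} k = na + b\binom{n}{2}$, and the induction is simply a clean way to package this computation while respecting the convention $\binom{n}{2} = 0$ for $n \in \{0,1\}$.

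First I would check the base case $n = 0$: by hypothesis $y_0 = 0$, and the proposed formula gives $0 \cdot a + \binom{0}{2} b = 0$, so the two agree. One could equally start at $n = 1$, where $y_1 = y_0 + x_0 = a$ matches $1 \cdot a + \binom{1}{2} b = a$.

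For the inductive step, I would assume $y_{n-1} = (n-1)a + \binom{n-1}{2} b$ and substitute into the recurrence:
\[
y_n = y_{n-1} + x_{n-1} = (n-1)a + \binom{n-1}{2} b + a + b(n-1) = na + \left( \binom{n-1}{2} + (n-1) \right) b.
\]
It then remains to verify the identity $\binom{n-1}{2} + (n-1) = \binom{n}{2}$, which is exactly Pascal's rule $\binom{n}{2} = \binom{n-1}{1} + \binom{n-1}{2}$ together with $\binom{n-1}{1} = n-1$, and the desired formula follows.

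The computation is entirely routine, so there is no serious obstacle; the only points requiring care are the edge cases in the base step, where one must confirm that the binomial coefficients vanish as expected when $n$ is small, and the bookkeeping of the shifted index $x_{n-1} = a + b(n-1)$ rather than $x_n$. If one instead prefers the direct summation approach, the analogous care is needed only in the standard evaluation of $\sum_{k=0}^{n-1} k = \binom{n}{2}$.
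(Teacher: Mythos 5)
Your proof is correct and follows essentially the same route as the paper's: induction on $n$ using the recurrence $y_n = y_{n-1} + x_{n-1}$ and the arithmetic formula for $x_n$, with the inductive step reducing to the identity $\binom{n-1}{2} + (n-1) = \binom{n}{2}$ (which the paper verifies by direct algebra rather than citing Pascal's rule). The only cosmetic differences are your choice of base case $n=0$ instead of $n=1$ and the aside about direct summation.
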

\begin{proof}
Let $y_0=0$. We know that $y_n=y_{n-1}+x_{n-1}$ and that $x_n=a+bn$. So, $y_1=y_0+x_0=0+a=a=1a+{1 \choose 2}b$. 

Next, assume that $y_n=na+{n \choose 2}b$ for $n=k$. We will show that when this is the case, it will also be true for $n=k+1$. Since $y_{k+1}=y_k+x_k$, we know that $y_{k+1}=ka+{k \choose 2}b+a+kb=(k+1)a+\frac{k(k-1)}{2}b+kb=(k+1)a+\frac{k^2-k+2k}{2}b=(k+1)a+\frac{k(k+1)}{2}b=(k+1)a+{{k+1} \choose 2}b$. Therefore, $y_n=na+{n \choose 2}b$ for $n\geq 0$.
\end{proof}

Now that we have an expression for our quadratic sequence, we would like to know when this sequence generates a numerical semigroup.  First, we will establish some notation.

Regardless of whether it generates a numerical semigroup, we can always use the quadratic sequence $y_n$ to generate a monoid.  Clearly the elements of the monoid depend on $a$ and $b$, so we will refer to the monoid generated by $y_n$ for the particular values of $a$ and $b$ as $S(a,b)$.  The following definition gives this notation more formally.

\begin{definition}
Let $a,b\in\mathbb{N}_0$.  Then
\[S(a,b):=\Braket{na+{n \choose 2}b | n\in\mathbb{N}_0}\]
\end{definition}

At times, we may refer to $S(a,b)$ simply as $S$, if the values of $a$ and $b$ are fixed and are clear from context.

It is clear from the definition that $S(a,b)$ is always a submonoid of $\mathbb{N}_0$, however, we would like to know when $S(a,b)$ is a numerical semigroup.

\begin{theorem}
For all $a,b\in\mathbb{N}_0$, $S(a,b)$ is a numerical semigroup if and only if $\gcd(a,b)=1$.
\begin{proof}
Let $S=\langle y_0, y_1, y_2, \ldots \rangle$ such that $y_n=na+\frac{n(n-1)}{2}b$ and let $A=\Set{y_0, y_1, y_2, \ldots}$. Assume $\gcd(a,b)\neq1$. This means $\gcd(a,b)=w$ for some $w\in \mathbb{Z}$. So $a=wc$ and $b=wd$ for $c,d\in \mathbb{Z}$. Now we can say $y_n=n(wc)+\frac{n(n-1)}{2}(wd)=w\left(nc+\frac{n(n-1)}{2}d\right)$. So if $\gcd(a,b)=w$, all elements of the generating set of $S$ will have a factor of $w$, meaning that if $\gcd(a,b)\neq1$, then $\gcd(A)\neq1$. So if $\gcd(A)=1$, then $\gcd(a,b)=1$.

Now, assume $\gcd(A)\neq1$. This means all elements of the generating set of $S$ share a common factor, say $u\in \mathbb{Z}$. We know that $y_1=a$ and $y_2=2a+b$ are elements of $A$. So, $a=ku$ and $2a+b=\ell u$ for $k,\ell\in\mathbb{Z}$. We can substitute $a=ku$ into $2a+b=\ell u$ and we get $2(ku)+b=\ell u$. Simplifying, we obtain $b=u(\ell -2k)$. Since $\ell -2k\in\mathbb{Z}$, $b$ is divisible by $u$. Since we showed that $a$ and $b$ are both divisible by $u$, we can say that $\gcd(a,b)\neq1$. Therefore, if $\gcd(A)\neq1$, then $\gcd(a,b)\neq1$, meaning that if $\gcd(a,b)=1$, then $\gcd(A)=1$.
\end{proof}
\end{theorem}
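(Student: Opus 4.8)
The plan is to reduce the whole statement to the earlier lemma asserting that, for a nonempty $A\subseteq\mathbb{N}_0$, the monoid $\langle A\rangle$ is a numerical semigroup if and only if $\gcd(A)=1$. Writing $A=\Set{y_n \mid n\in\mathbb{N}_0}$ for the generating set of $S(a,b)$, with $y_n=na+\binom{n}{2}b$, it then suffices to prove the single equality $\gcd(A)=\gcd(a,b)$. Once this is in hand the theorem is immediate, since $\gcd(A)=1$ holds exactly when $\gcd(a,b)=1$, and $S(a,b)=\langle A\rangle$.

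To establish $\gcd(A)=\gcd(a,b)$ I would argue by mutual divisibility. In one direction, every generator $y_n=na+\binom{n}{2}b$ is an integer combination of $a$ and $b$, so $\gcd(a,b)$ divides each $y_n$ and hence divides $\gcd(A)$. For the reverse direction I would use that $\gcd(A)$ divides the gcd of any finite subcollection of $A$; in particular it divides both $y_1=a$ and $y_2=2a+b$. From these two facts $\gcd(A)\mid a$ and $\gcd(A)\mid (2a+b)-2a=b$, so $\gcd(A)\mid\gcd(a,b)$. Combining the two divisibilities yields $\gcd(A)=\gcd(a,b)$, as desired.

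The only point that needs a little care is the bookkeeping around $y_0=0$: since $\gcd(0,k)=k$, the term $y_0=0$ contributes nothing to $\gcd(A)$, so $A$ is effectively $\Set{y_n \mid n\geq 1}$, and the lemma's nonemptiness hypothesis is met by $y_1=a$. I do not expect a serious obstacle here; the essential observation—doing almost all the work—is that the two lowest nontrivial generators $y_1$ and $y_2$ already pin down the gcd, so the entire infinite family of generators collapses to the pair $\Set{a,\,2a+b}$, after which the conclusion follows directly from the cited lemma. The authors' own argument amounts to the same computation phrased through two contrapositives, one for each implication.
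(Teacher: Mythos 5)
Your proposal is correct and is essentially the paper's argument: both directions rest on the same two computations, namely that every $y_n=na+\binom{n}{2}b$ is an integer combination of $a$ and $b$, and that $y_1=a$ and $y_2=2a+b$ recover $a$ and $b$. You merely package the two contrapositive implications of the paper's proof as the single cleaner statement $\gcd(A)=\gcd(a,b)$ before invoking the cited lemma.
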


Note that there are three special cases of numerical semigroups in $\mathcal{Q}_0^\infty$ that will be excluded from some the theorems in the remainder of this paper, because some of our techniques and formulas do not work on them.  These special cases are when $a=0$, when $a=1$, and when $b=0$.

When $a=0$, in order to have $\gcd(a,b)=1$, we must have $b=1$.  In this case, $y_2=2a+{2\choose 2}b=1$, hence $S(0,1)=\mathbb{N}_0$.  Similarly, when $a=1$, we have $y_1=1a+{1\choose 2}b=1$, so $S(1,b)=\mathbb{N}_0$.  Finally, when $b=0$, in order to have $\gcd(a,b)=1$, we must have $a=1$, and so $S(1,0)=\mathbb{N}_0$ as well.

Let us look at an example of a numerical semigroup in $\mathcal{Q}_0^\infty$ to clarify the definitions and concepts we have just discussed.

\begin{example}
Let $a=2$ and $b=1$.  Then
\[y_n=na+{n \choose 2}b=2n+\frac{n(n-1)}{2}=\frac{n(n+3)}{2}\]
Hence,
\[S(2,1)=\langle y_0,y_1,y_2,\dots\rangle=\langle 0, 2, 5, 9, 14, 20, 27, \dots\rangle\]
As $S(2,1)$ includes $2$, it must include all even natural numbers, and, as it includes $5$ and $2$, it must include all odd numbers beginning with $5$.  In fact, $1$ and $3$ are the only natural numbers that cannot be made with these generators, hence
\[S(2,1)=\langle 2, 5\rangle=\mathbb{N}_0\setminus \Set{1, 3}\]
\end{example}

\section{The $\mu_{a,b}$ Sequence and the Ap\'ery Set}

One of the most important questions that we can ask about $S(a,b)$ is the following: for a given coefficient $n$ of $b$, what is the minimum coefficient $m$ of $a$ such that $ma+nb\in S(a,b)$?  We will use the notation $\mu_{a,b}(n)$ to denote the answer to this question.  The following definition formalizes this notion.

\begin{definition}
Let $S(a,b)\in\mathcal{Q}_0^\infty$ and let $n\in\mathbb{Z}$.  Then
\[\mu_{a,b}(n):=\min\Set{m\in\mathbb{Z} | ma+nb\in S(a,b)}\]
\end{definition}

Note that we have defined the quantity $\mu_{a,b}$ as being a map on $\mathbb{Z}$ rather than on $\mathbb{N}_0$.  This is so because it is possible for $ma+nb$ to be in $S(a,b)$ when $m$ or $n$ are negative, although not, of course, when both $m$ and $n$ are negative.  For example, when $m=b$ and $n=-a$, $ma+nb=0\in S(a,b)$, and likewise when $m=-b$ and $n=a$.

The $\mu_{a,b}$ values are very important, because they give us the Ap\'ery set $\Ap(S(a,b),a)$, as shown by the following theorem.

\begin{theorem}\label{AperyS(a,b)}
For all $S(a,b)\in\mathcal{Q}_0^\infty$ where $a\geq 1$,
\[\Ap(S(a,b),a)=\Set{\mu_{a,b}(n)a+nb | n=0,1,2,\dots,a-1}\]
\end{theorem}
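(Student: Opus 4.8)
The plan is to compare the proposed set with the description of the Ap\'ery set furnished by the earlier lemma on least residues, which tells us that $\Ap(S(a,b),a)=\Set{w(0),w(1),\ldots,w(a-1)}$, where $w(i)$ is the least element of $S(a,b)$ congruent to $i$ modulo $a$. Thus it suffices to show that the $a$ numbers $\mu_{a,b}(n)a+nb$, for $n=0,1,\ldots,a-1$, are exactly these least residue representatives.

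First I would record two structural facts. Expanding a generic element of $S(a,b)$ as a nonnegative combination $\sum_k \lambda_k\!\left(ka+\binom{k}{2}b\right)$ shows that every element of $S(a,b)$ can be written as $ma+nb$ with $m,n\in\mathbb{N}_0$. Second, because $\gcd(a,b)=1$, the map $n\mapsto nb \bmod a$ is a bijection of $\Set{0,1,\ldots,a-1}$ onto itself; since $\mu_{a,b}(n)a+nb\equiv nb \pmod a$, the $a$ elements on the right-hand side lie in pairwise distinct residue classes modulo $a$ and hence represent every class exactly once. Matching cardinalities, equality of the two sets will follow once I show that each $\mu_{a,b}(n)a+nb$ is the minimal element of $S(a,b)$ in its own residue class.

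The heart of the argument is a reduction on the coefficient of $b$. Fix $n\in\Set{0,\ldots,a-1}$ and let $i=nb \bmod a$. The class of $i$ meets $S(a,b)$ (the complement is finite), so $w(i)$ exists; writing $w(i)=m'a+n'b$ with $m',n'\in\mathbb{N}_0$ and using $n'\equiv n\pmod a$, we have $n'=n+ka$ for some $k\in\mathbb{N}_0$, whence $w(i)=(m'+kb)a+nb$ with $m'+kb\in\mathbb{N}_0$. This exhibits an element of $S(a,b)$ whose coefficient of $b$ is exactly $n$, so the set $\Set{m\in\mathbb{Z} | ma+nb\in S(a,b)}$ is nonempty, $\mu_{a,b}(n)$ is well-defined, and $\mu_{a,b}(n)\le m'+kb$, giving $\mu_{a,b}(n)a+nb\le w(i)$. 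Conversely, $\mu_{a,b}(n)a+nb$ lies in $S(a,b)$ and is congruent to $i$ modulo $a$, so it is at least $w(i)$. Hence $\mu_{a,b}(n)a+nb=w(i)=w(nb \bmod a)$, and combining this with the bijection above completes the proof.

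The one step that requires genuine care---and the only real obstacle---is the reduction $n'\mapsto n$: one must notice that lowering the coefficient of $b$ to its least residue modulo $a$ forces the coefficient of $a$ to increase by a multiple of $b$ while leaving the underlying element of $S(a,b)$ unchanged, so membership in $S(a,b)$ is preserved for free. Everything else is bookkeeping with the residue-class bijection guaranteed by $\gcd(a,b)=1$.
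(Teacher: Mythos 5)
Your proof is correct and takes essentially the same route as the paper's: both arguments rest on the fact that $\gcd(a,b)=1$ makes the residues $nb \bmod a$ pairwise distinct, combined with the minimality in the definition of $\mu_{a,b}(n)$. You are somewhat more careful than the paper, verifying that $\mu_{a,b}(n)$ is well-defined and identifying $\mu_{a,b}(n)a+nb$ as exactly $w(nb \bmod a)$ via a two-sided inequality, where the paper simply notes that subtracting $a$ once leaves $S(a,b)$ and then counts.
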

\begin{proof}
We assumed $\gcd(a,b)=1$, so $0b,1b,\ldots,(a-1)b$ form all of the different congruence classes modulo $a$. It follows that all elements of the form $\mu_{a,b}(n)a+nb$ where $n=0,1,2,\dots,a-1$ are in different congruence classes modulo $a$.

We defined $\mu_{a,b}(n)=\min\Set{m\in\mathbb{Z} | ma+nb\in S(a,b)}$, so $\mu_{a,b}(n)a+nb-a=(\mu_{a,b}(n)-1)a+nb$ is not in $S(a,b)$. Therefore, each $\mu_{a,b}(n)a+nb$ is the smallest element of $S(a,b)$ in its congruence class modulo $a$, so each $\mu_{a,b}(n)a+nb$ for $n=0,1,2,\dots,a-1$ is in $\Ap(S(a,b),a)$.
\end{proof}

In fact, only the values $\mu_{a,b}(0), \mu_{a,b}(1), \ldots, \mu_{a,b}(a-1)$ are needed to find any value of $\mu_{a,b}$, as the next theorem will show.  In a later section, we will show an efficent way of calculating these values.

\begin{theorem}\label{mu(a,b) of n+ia}
For all $S(a,b)\in\mathcal{Q}_0^\infty$ and all $i,n\in\mathbb{Z}$,
\[\mu_{a,b}(n+ia)=\mu_{a,b}(n)-ib\]
\end{theorem}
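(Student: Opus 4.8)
The plan is to derive the identity directly from the definition of $\mu_{a,b}$ by a single change of variables, with no induction or case analysis needed. The one computation that does all the work is the algebraic identity
\[
ma + (n+ia)b = (m+ib)a + nb,
\]
valid for all integers $m,n,i$. Read correctly, this says that the element of $S(a,b)$ produced by the coefficient pair $(m,\,n+ia)$ is literally the same element produced by the pair $(m+ib,\,n)$. Since $\mu_{a,b}(n+ia)$ is the least $m$ making the left-hand side lie in $S(a,b)$, while $\mu_{a,b}(n)$ is the least coefficient $m'$ of $a$ making $m'a+nb$ lie in $S(a,b)$, the two defining minimization problems should be related by exactly the shift $m \mapsto m+ib$.

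First I would dispense with well-definedness: for $a\geq 1$ the set $\Set{m\in\mathbb{Z} | ma+nb\in S(a,b)}$ is bounded below, because $S(a,b)\subseteq\mathbb{N}_0$ forces $ma+nb\geq 0$, and it is nonempty, because $S(a,b)$ is a numerical semigroup and hence contains all sufficiently large integers, so $ma+nb\in S(a,b)$ once $m$ is large enough. Hence the minimum defining $\mu_{a,b}(n)$ exists as an honest integer, and the same reasoning applies with $n$ replaced by $n+ia$. This is the one point where the standing hypothesis $a\geq 1$ genuinely enters; when $a=0$ the quantity $\mu_{a,b}$ is not even well-defined, so the theorem should be read under that restriction. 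With finiteness in hand, I would apply the identity: the condition $ma+(n+ia)b\in S(a,b)$ is equivalent to $(m+ib)a+nb\in S(a,b)$, and since $m\mapsto m+ib$ is a bijection of $\mathbb{Z}$, the integers $m$ admissible for $\mu_{a,b}(n+ia)$ are exactly the integers of the form $m'-ib$ where $m'$ is admissible for $\mu_{a,b}(n)$. Because subtracting the constant $ib$ commutes with taking a minimum, this yields
\[
\mu_{a,b}(n+ia) = \mu_{a,b}(n) - ib.
\]

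I do not expect a serious obstacle: once the identity $ma+(n+ia)b=(m+ib)a+nb$ is written down, the rest is bookkeeping. The only places demanding care are confirming the minima are finite---so that subtracting $ib$ is a legitimate operation rather than arithmetic on $\pm\infty$---and observing that the reindexing $m\mapsto m+ib$ is a bijection of $\mathbb{Z}$, which holds precisely because $b$, and hence $ib$, is an integer. I would therefore expect the finiteness verification, rather than the substitution, to be the only step where the hypotheses on $a$ and $b$ are actually used.
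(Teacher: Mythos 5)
Your proposal is correct and follows essentially the same route as the paper: both proofs rest on the identity $ma+(n+ia)b=(m+ib)a+nb$ followed by the reindexing $m'=m+ib$ and the fact that shifting by a constant commutes with taking a minimum. Your additional check that the minimum is actually attained (nonempty, bounded below) is a point the paper's proof passes over silently, and your observation that $a\geq 1$ is needed for $\mu_{a,b}$ to be well-defined is a legitimate refinement of the stated hypotheses.
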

\begin{proof}
Let $S(a,b)\in\mathcal{Q}_0^\infty$ and suppose $i,n\in\mathbb{Z}$. We know from the definition of $\mu_{a,b}(n)$ that $\mu_{a,b}(n+ia)=\min\Set{m\in\mathbb{Z} | ma+(n+ia)b\in S(a,b)}$. So, we can collect the copies of $a$ to get 
\[\mu_{a,b}(n+ia)=\min\Set{m\in\mathbb{Z} | (m+ib)a+nb\in S(a,b)}\]
Now we can add $ib$ to both sides to get 
\[\mu_{a,b}(n+ia)+ib=ib+\min\Set{m\in\mathbb{Z} | (m+ib)a+nb\in S(a,b)}\]
Since $ib$ is not dependent on $a$, we can move $ib$ inside the set to obtain 
\[\mu_{a,b}(n+ia)+ib=\min\Set{(m+ib)\in\mathbb{Z} | (m+ib)a+nb\in S(a,b)}\]
Now, let $m'=m+ib$. Then,
\[\mu_{a,b}(n+ia)+ib=\min\Set{m'\in\mathbb{Z} | m'a+nb\in S(a,b)}\]
We know from the definition of $\mu_{a,b}(n)$ that 
\[\min\Set{m'\in\mathbb{Z} | m'a+(n+ia)b\in S(a,b)}=\mu_{a,b}(n)=\mu_{a,b}(n+ia)+ib\]
Therefore, $\mu_{a,b}(n+ia)=\mu_{a,b}(n)-ib$ for all $i,n\in\mathbb{Z}$.
\end{proof}

\section{The Lifting to $\mathbb{N}_0^2$ and the $\mu$ Sequence}

Now we will define a monoid in $\mathbb{N}_0^2$ that will allow us to unify all numerical semigroups generated by infinite quadratic sequences with initial term zero.

Let $T$ be the monoid generated by all linear combinations over $\mathbb{N}_0$ of the generators $z_i=\left(i, {i \choose 2}\right)$ where $i\in\mathbb{N}_0$.  That is,
\begin{align*}
    T&=\Set{\sum_{i=1}^{\infty} \lambda_iz_i | \lambda_i \in \mathbb{N}_0} \\
    &=\Set{\left(1\lambda_{1}+2\lambda_{2}+\ldots,{1 \choose 2}\lambda_{1}+{2 \choose 2}\lambda_{2}+\ldots\right) | \lambda_i \in \mathbb{N}_0}.
\end{align*}

The following theorem shows how the $T$ monoid is connected to our numerical semigroups $S(a,b)$.

\begin{theorem}\label{PhiFunction}
Let $S(a,b)\in\mathcal{Q}_0^\infty$, let $T$ be as previously defined, and let $\phi_{a,b}: \mathbb{N}_0^2 \to \mathbb{N}_0$ be given by $\phi_{a,b}(m,n)=ma+nb$.  Then $\phi_{a,b}[T]=S(a,b)$.
\end{theorem}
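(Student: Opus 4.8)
The plan is to exploit the fact that $\phi_{a,b}$ is a monoid homomorphism and that it carries the generators of $T$ onto the generators of $S(a,b)$; the equality of images then follows from the general principle that a homomorphism maps a generated submonoid onto the submonoid generated by the images of the generators. So the whole argument reduces to one computation plus a standard generators-to-generators bookkeeping step, and I do not expect any genuine obstacle here.

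First I would check that $\phi_{a,b}\colon(\mathbb{N}_0^2,+)\to(\mathbb{N}_0,+)$ is a monoid homomorphism. This is immediate from linearity: for any $(m,n),(m',n')\in\mathbb{N}_0^2$ we have $\phi_{a,b}((m,n)+(m',n'))=(m+m')a+(n+n')b=\phi_{a,b}(m,n)+\phi_{a,b}(m',n')$, and $\phi_{a,b}(0,0)=0$, so $\phi_{a,b}$ respects the operation and the identity. The central observation, which I would record next, is that $\phi_{a,b}$ sends the $i$th generator of $T$ to the $i$th generator of $S(a,b)$: evaluating on $z_i=\left(i,{i\choose 2}\right)$ gives $\phi_{a,b}(z_i)=ia+{i\choose 2}b=y_i$. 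Thus the images of the generating set of $T$ are precisely the generators $y_0,y_1,y_2,\dots$ of $S(a,b)$.

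From here I would establish the two containments directly. For $\phi_{a,b}[T]\subseteq S(a,b)$, take an arbitrary element of $T$, which by definition has the form $\sum_i\lambda_i z_i$ with $\lambda_i\in\mathbb{N}_0$ and only finitely many nonzero; applying the homomorphism and using $\phi_{a,b}(z_i)=y_i$ yields $\sum_i\lambda_i y_i$, a nonnegative integer combination of the generators of $S(a,b)$, hence an element of $S(a,b)$. For the reverse containment $S(a,b)\subseteq\phi_{a,b}[T]$, any element of $S(a,b)$ is likewise a combination $\sum_i\lambda_i y_i$, and its preimage $\sum_i\lambda_i z_i$ lies in $T$ and maps to it, so it lies in the image.

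The only point requiring a word of care is the formally infinite index set in the definition of $T$, but since all but finitely many $\lambda_i$ vanish, each such sum is genuinely finite and the homomorphism property applies termwise with no convergence issues. I therefore expect the proof to be short and entirely routine, with the lone ingredient of substance being the identity $\phi_{a,b}(z_i)=y_i$ that pins the lifted generators $z_i$ to the original quadratic generators $y_i$.
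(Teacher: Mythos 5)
Your proof is correct and takes essentially the same approach as the paper: the paper's argument is precisely the set-level computation that distributes $a$ and $b$ over the $\lambda_i$ and regroups to obtain $\lambda_i\left(ia+{i\choose 2}b\right)=\lambda_i y_i$, which is your identity $\phi_{a,b}(z_i)=y_i$ combined with linearity. Your phrasing via the homomorphism-sends-generators-to-generators principle is just a cleaner packaging of the same computation.
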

\begin{proof}
The image of $T$ under $\phi_{a,b}$ is
\begin{align*}
&\Set{ma+nb | (m,n) \in T} \\
&= \Set{ma+nb | m=1\lambda_{1}+2\lambda_{2}+\ldots, n={1 \choose 2}\lambda_{1}+{2 \choose 2}\lambda_{2}+\ldots, \lambda_i \in \mathbb{N}_0} \\
&= \Set{(1\lambda_{1}+2\lambda_{2}+\ldots)a+\left({1 \choose 2}\lambda_{1}+{2 \choose 2}\lambda_{2}+\ldots\right)b | \lambda_i \in \mathbb{N}_0} \\
&= \Set{\lambda_{1}\left(1a+{1 \choose 2}b\right)+\lambda_{2}\left(2a+{2 \choose 2}b\right)+\ldots | \lambda_i \in \mathbb{N}_0} \\
&= \Set{\lambda_{1}y_1+\lambda_{2}y_2+\ldots | \lambda_i \in \mathbb{N}_0}=S(a,b)
\end{align*}
\end{proof}

Hence, the monoid $T$ unifies all numerical semigroups $S(a,b)\in\mathcal{Q}_0^\infty$ in the sense that each such $S(a,b)$ is a particular projection of $T$.

\begin{figure}
  \centering
      \includegraphics[width=0.75\textwidth]{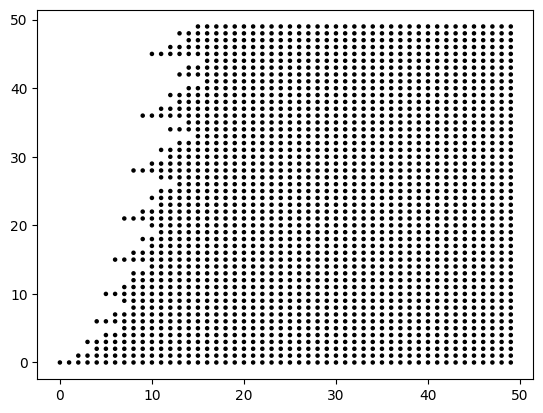}
  \caption{$\mathbb{N}_0^2$ with elements of $T$ colored black.}
   \label{Tpic}
\end{figure}

Now we will discuss the elements of $T$.  Figure \ref{Tpic} shows the elements $(m,n)\in T$ for $m,n<50$.  As can be seen in the figure, within each row, the color changes from white to black exactly once.  To say this more formally, for each value of $n\in \mathbb{N}_0$, there is some value $\mu(n)\in\mathbb{N}_0$ for which $m<\mu(n)$ implies $(m,n)\notin T$ and $m\geq \mu(n)$ implies $(m,n)\in T$.  This is so because $z_1=(1,0)$ is one of the generators of $T$, thus, for any $(m,n)\in T$, we have $(m+1,n)\in T$ as well.

This behavior is remarkably similar to that of an Ap\'ery set for a numerical semigroup, as well as to the behavior of $\mu_{a,b}$, and we will show in Section \ref{mu=mu_a,b section} that, in fact, $\mu_{a,b}(n)=\mu(n)$ when $0\leq n<a$, except for eight particular values of $(a,n)$.  Before we can show that, however, we need to establish many properties of $\mu$, in this and the next section.

We begin with a formal definition of $\mu(n)$:

\begin{definition}
Let $n\in\mathbb{N}_0$.  Then $\mu(n)$ is defined as
\[\mu(n):=\min\Set{m\in\mathbb{N}_0 | (m,n)\in T}\]
\end{definition}

Note that here, unlike with $\mu_{a,b}$, we have defined $\mu$ as a map on $\mathbb{N}_0$ rather than on $\mathbb{Z}$ because it is not possible for either $m$ or $n$ to be negative when $(m,n)\in T$.

Now we will establish some properties of $\mu$ that will allow us to prove an efficient method of calculating the $\mu$ values.

\begin{theorem}[Recursive bound on $\mu$]\label{recursiveBounds}
For all $S(a,b)\in\mathcal{Q}_0^\infty$ and for all $n_1,n_2\in\mathbb{N}_0$,
\[\mu(n_1+n_2)\leq\mu(n_1)+\mu(n_2)\]
\end{theorem}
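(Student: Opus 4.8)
The plan is to exploit the single structural fact that makes this inequality almost immediate: $T$ is a monoid, hence closed under the coordinatewise addition it inherits from $\mathbb{N}_0^2$. The quantity $\mu(n)$ is defined as the least $m$ with $(m,n)\in T$, so the key consequence of this definition is not merely that $\mu(n)$ exists but that the point $(\mu(n),n)$ itself lies in $T$. This is legitimate because the defining set $\Set{m\in\mathbb{N}_0 | (m,n)\in T}$ is a nonempty subset of the well-ordered set $\mathbb{N}_0$ (for instance, $n$ copies of the generator $z_2=(2,1)$ yield $(2n,n)\in T$), so its minimum is attained.

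The steps I would carry out are then the following. First, read off the two witnessing memberships directly from the definition of $\mu$: we have $(\mu(n_1),n_1)\in T$ and $(\mu(n_2),n_2)\in T$. Second, add these two elements inside $T$; by closure of the monoid under addition, their sum $\bigl(\mu(n_1)+\mu(n_2),\,n_1+n_2\bigr)$ is again an element of $T$. This exhibits a concrete value of $m$, namely $m=\mu(n_1)+\mu(n_2)$, satisfying $(m,\,n_1+n_2)\in T$. Finally, since $\mu(n_1+n_2)$ is by definition the \emph{least} such $m$, it cannot exceed this particular witness, and we conclude $\mu(n_1+n_2)\le\mu(n_1)+\mu(n_2)$.

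I do not expect a genuine obstacle in this argument: its entire content is the closure of $T$ together with the minimality built into the definition of $\mu$, and the whole proof is only a few lines. The one point that warrants a moment's care is the passage from ``$\mu(n)$ is defined as a minimum'' to ``$(\mu(n),n)\in T$''; this is exactly where one must invoke that the set in question is a nonempty subset of $\mathbb{N}_0$, so that the minimum is genuinely achieved and supplies an honest element of $T$ to work with, rather than a mere infimum.
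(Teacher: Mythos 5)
Your proposal is correct and follows essentially the same route as the paper's proof: both take the witnesses $(\mu(n_1),n_1),(\mu(n_2),n_2)\in T$, add them using the closure of the monoid $T$, and conclude by the minimality in the definition of $\mu(n_1+n_2)$. Your extra remark justifying that the minimum is attained (via the nonempty subset of $\mathbb{N}_0$, witnessed by $n$ copies of $z_2$) is a small addition the paper leaves implicit, but it does not change the argument.
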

\begin{proof}
Since $\mu(n_1)=\min\Set{m \in \mathbb{N}_0 | (m,(n_1)) \in T}$, that means $(\mu(n_1),n_1) \in T$. The same logic holds true for $\mu(n_2)$. Hence we can add together these two elements in $T$ to get another element that is in $T$. 
\[(\mu(n_1),n_1) + (\mu(n_2),n_2)=(\mu(n_1)+\mu(n_2),n_1+n_2) \in T.\] 
By definition, $\mu(n_1+n_2)=$min$\Set{m \in \mathbb{N}_0 | (m,n_1+n_2) \in T}$. Since $\mu(n_1)+\mu(n_2) \in \mathbb{N}_0$ and $(\mu(n_1)+\mu(n_2),n_1+n_2) \in T$,
\[\mu(n_1)+\mu(n_2) \in \Set{m \in \mathbb{N}_0 | (m,n_1+n_2) \in T}.\] 
Since $\mu(n_1)+\mu(n_2)$ is in the set we are taking the minimum over to get $\mu(n_1+n_2)$, we can say that
\[\mu(n_1+n_2)\leq\mu(n_1)+\mu(n_2).\]
\end{proof}

A particular application of the previous theorem is the following.

\begin{corollary}\label{recursiveBound2}
For all $n,i\in\mathbb{N}_0$, if ${i \choose 2}\leq n$, then
\[\mu(n)\leq \mu\left(n-{i \choose 2}\right)+i\]
\end{corollary}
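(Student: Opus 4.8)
The plan is to recognize this as a direct specialization of Theorem \ref{recursiveBounds}, obtained by splitting $n$ in a way that makes one of the two resulting $\mu$-terms collapse to a clean bound. The only genuinely delicate point is a domain check, which is exactly what the hypothesis ${i \choose 2}\leq n$ supplies.

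First I would set $n_1=n-{i \choose 2}$ and $n_2={i \choose 2}$. Here the hypothesis ${i \choose 2}\leq n$ is precisely what guarantees that $n_1\in\mathbb{N}_0$, so that $\mu(n_1)$ is even defined (recall that, unlike $\mu_{a,b}$, the function $\mu$ is defined only on $\mathbb{N}_0$). Since $n_1+n_2=n$, Theorem \ref{recursiveBounds} immediately yields
\[\mu(n)=\mu(n_1+n_2)\leq\mu(n_1)+\mu(n_2)=\mu\!\left(n-{i \choose 2}\right)+\mu\!\left({i \choose 2}\right).\]

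Second, I would bound the term $\mu\!\left({i \choose 2}\right)$ from above by $i$. This follows straight from the construction of $T$: the pair $z_i=\left(i,{i \choose 2}\right)$ is one of the defining generators of $T$, so $\left(i,{i \choose 2}\right)\in T$, which means $i$ belongs to the set $\Set{m\in\mathbb{N}_0 | (m,{i \choose 2})\in T}$ whose minimum defines $\mu\!\left({i \choose 2}\right)$. Hence $\mu\!\left({i \choose 2}\right)\leq i$, and substituting this into the inequality above gives the desired conclusion $\mu(n)\leq\mu\!\left(n-{i \choose 2}\right)+i$.

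I do not expect a serious obstacle here, since the statement is essentially Theorem \ref{recursiveBounds} read in a particular direction. The one step meriting care is verifying $n-{i \choose 2}\geq 0$ before applying $\mu$ to it, which is the sole reason the hypothesis ${i \choose 2}\leq n$ appears. The key observation that drives the whole argument is that the second coordinate of the generator $z_i$ is exactly ${i \choose 2}$, so choosing $n_2={i \choose 2}$ forces $\mu(n_2)\leq i$ and makes the second summand explicit rather than an unknown $\mu$-value.
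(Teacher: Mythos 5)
Your proof is correct and follows essentially the same route as the paper's: split $n$ as $\left(n-{i\choose 2}\right)+{i\choose 2}$, apply Theorem \ref{recursiveBounds}, and then use the generator $z_i=\left(i,{i\choose 2}\right)\in T$ to conclude $\mu\left({i\choose 2}\right)\leq i$. Your explicit remark about why the hypothesis ${i\choose 2}\leq n$ is needed for $\mu$ to be defined at $n-{i\choose 2}$ is a nice touch that the paper makes only implicitly.
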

\begin{proof}
Assume $i\in\mathbb{N}_0$ and ${i\choose 2}\leq n$.  The previous theorem states that if $n_1,n_2\in\mathbb{N}_0$, then $\mu(n_1+n_2) \leq \mu(n_1)+ \mu(n_2)$. Let $n_1=n-{i \choose 2}$ and $n_2={i \choose 2}$. By the assumptions about $i$, we can see that $n_1,n_2\in\mathbb{N}_0$.  By plugging these values into the inequality, we get 
\[\mu\left(n-{i \choose 2}+{i \choose 2}\right) \leq \mu\left(n-{i \choose 2}\right)+ \mu\left({i \choose 2}\right)\] 
Simplifying, we obtain 
\[\mu\left(n\right) \leq \mu\left(n-{i \choose 2}\right)+\mu\left({i \choose 2}\right)\]
We know that $z_i=\left(i,{i\choose 2}\right)\in T$. So, by the definition of $\mu(i)$, it must be that $\mu\left({i\choose 2}\right)\leq i$. Therefore, 
\[\mu\left(n\right) \leq \mu\left(n-{i \choose 2}\right)+i\]
\end{proof}

The next theorem provides a computationally efficient way of calculating $\mu$ values.  Note that the theorem statement refers to $\mathbb{N}$, not $\mathbb{N}_0$. Values of $\mu$ that were calculated using this theorem, as well as a Python implementation of the theorem, can be found at \cite{OEIS}.

\begin{theorem}\label{recursiveExact}
For all $n\in\mathbb{N}$,
\[\mu(n)=\min\Set{\mu\left(n-{i \choose 2}\right)+i | i\in\mathbb{N}, {i \choose 2}\leq n}\]
\end{theorem}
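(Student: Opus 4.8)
The plan is to prove the two inequalities $\mu(n)\le\min\{\cdots\}$ and $\mu(n)\ge\min\{\cdots\}$ separately, where the minimum is taken over all $i\in\mathbb{N}$ with $\binom{i}{2}\le n$. The first inequality is essentially free: Corollary \ref{recursiveBound2} already asserts that whenever $i\in\mathbb{N}_0$ and $\binom{i}{2}\le n$ we have $\mu(n)\le\mu\!\left(n-\binom{i}{2}\right)+i$. Restricting to $i\in\mathbb{N}$ and taking the minimum of the right-hand side over all admissible $i$ immediately yields $\mu(n)\le\min\{\mu(n-\binom{i}{2})+i\mid i\in\mathbb{N},\ \binom{i}{2}\le n\}$. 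Thus the real content of the theorem is the reverse inequality, which says that this minimum is actually attained by an honest decomposition of a minimal element of $T$.

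For the reverse inequality I would work directly in the lifting $T$. By definition $(\mu(n),n)\in T$, so I can write it as a finite nonnegative integer combination of generators,
\[(\mu(n),n)=\sum_{i}\lambda_i z_i=\left(\sum_i\lambda_i\, i,\ \sum_i\lambda_i\binom{i}{2}\right),\qquad \lambda_i\in\mathbb{N}_0.\]
Because $\binom{0}{2}=\binom{1}{2}=0$, the second coordinate equals $\sum_{i\ge 2}\lambda_i\binom{i}{2}$, and since $n\ge 1$ this forces $\lambda_{i^*}\ge 1$ for at least one index $i^*\ge 2$. Removing a single copy of $z_{i^*}$ leaves a combination with nonnegative coefficients, so
\[\left(\mu(n)-i^*,\ n-\binom{i^*}{2}\right)=\sum_i\lambda_i z_i-z_{i^*}\in T.\]
In particular the second coordinate is a nonnegative integer, which gives $\binom{i^*}{2}\le n$, and $i^*\in\mathbb{N}$, so $i^*$ is an admissible index.

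Having produced a point of $T$ in row $n-\binom{i^*}{2}$ whose first coordinate is $\mu(n)-i^*$, the definition of $\mu$ as the least first coordinate occurring in that row gives $\mu\!\left(n-\binom{i^*}{2}\right)\le\mu(n)-i^*$, that is, $\mu\!\left(n-\binom{i^*}{2}\right)+i^*\le\mu(n)$. Since $i^*$ is admissible, the minimum on the right-hand side is no larger than this particular term, so $\min\{\cdots\}\le\mu(n)$. Combined with the first inequality, this yields the desired equality.

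The only genuine obstacle is this reverse direction, and within it the crucial point is the existence of a generator $z_{i^*}$ with $i^*\ge 2$ whose removal both keeps the combination inside $T$ and lowers the row index by $\binom{i^*}{2}\ge 1$. The hypothesis $n\in\mathbb{N}$ rather than $n\in\mathbb{N}_0$ is exactly what guarantees such a generator is present; for $n=0$ the minimal element $(\mu(0),0)=(0,0)$ uses no nontrivial generator, and the recursion would be vacuous, which is why the statement is restricted to $\mathbb{N}$.
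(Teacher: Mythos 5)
Your proof is correct and follows essentially the same route as the paper: the upper bound comes from Corollary \ref{recursiveBound2}, and the lower bound comes from decomposing the minimal element $(\mu(n),n)$ of $T$ and removing one generator. Your explicit observation that the positive-coefficient generator can be chosen with index $i^*\ge 2$ (since $\binom{0}{2}=\binom{1}{2}=0$) is in fact slightly more careful than the paper's wording, which only notes that some $c_i>0$.
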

\begin{proof}
Let $n\in\mathbb{N}$.  We know that $\mu(n)$ is defined as the minimum value of $1c_1+2c_2+\ldots$ such that $c_1,c_2,\ldots \in \mathbb{N}_0$ and $n={1\choose2}c_1+{2\choose2}c_2+\ldots$. So, there exist some $c_1,c_2,\ldots c_k \in \mathbb{N}_0$ where $k\in\mathbb{N}$ such that 
\[\mu(n)=1c_1+2c_2+\ldots+kc_k\] 
\[n={1\choose2}c_1+{2\choose2}c_2+\ldots+{k\choose2}c_k\]
We know that ${k\choose 2}\leq n$, because if ${k\choose 2}>n$, then the right hand side of the second equation from above will be greater than the left hand side. Also, since $n\neq 0$, then there exists some $i\in\{1,2,\ldots,k\}$ such that $c_i> 0$, because all $c_i\geq 0$ and not all of them can be 0 because $n\neq 0$. By subtracting $i$ from both sides of our $\mu(n)$ equation, we obtain 
\begin{align*}
    \mu(n)-i
    &=-i+1c_1+2c_2+\ldots+kc_k \\
    &=1c_1+2c_2+\ldots+(i-1)c_{i-1}+i(c_i-1)+(i+1)c_{i+1}+\ldots+kc_k
\end{align*}
Since $c_i > 0,$ $c_i-1\in\mathbb{N}_0.$
Next, we can subtract ${i\choose2}$ from both sides of our $n$ equation to obtain
\begin{align*}
   n-{i\choose2}
   &=-{i\choose2}+{1\choose2}c_1+{2\choose2}c_2+\ldots+{k\choose2}c_k \\
   &={1\choose2}c_1+{2\choose2}c_2+\ldots+{i-1\choose 2}c_{i-1}+{i\choose 2}(c_1-1)+ \\
   &\;\;\;\;\; {i+1 \choose 2}c_{i+1}+\ldots+{k\choose2}c_k
\end{align*}
Since $c_i> 0$, $c_i-1\in\mathbb{N}_0$. Therefore, all of our coefficients will still be in $\mathbb{N}_0$, so 
\[\left(\mu(n)-i, n-{i\choose2}\right) \in T.\] 
So then, by the definition of $\mu\left(n-{i\choose2}\right)$,
\[\mu(n)-i\geq \mu\left(n-{i\choose2}\right)\] 
which implies that 
\[\mu(n)\geq \mu\left(n-{i\choose2}\right)+i.\] 
However, Corollary \ref{recursiveBound2} states that for all $n,i\in\mathbb{N}_0$, if ${i \choose 2}\leq n$, then
\[\mu(n) \leq \mu\left(n-{i\choose2}\right)+i\] 
Therefore, $\mu(n)=\mu\left(n-{i \choose 2}\right)+i$ for some $i\in\mathbb{N}$ such that ${i \choose 2}\leq n$, while at the same time $\mu(n)\leq \mu\left(n-{i \choose 2}\right)+i$ for all $i\in\mathbb{N}_0$. So, it must be the case that 
\[\mu(n)=\min\Set{\mu\left(n-{i \choose 2}\right)+i | i\in\mathbb{N}, {i \choose 2}\leq n}.\]
\end{proof}

\section{Bounding the $\mu$ Sequence}

As noted previously, we will show in Section \ref{mu=mu_a,b section} that, in fact, $\mu_{a,b}(n)=\mu(n)$ when $0\leq n<a$, except for eight particular values of $(a,n)$.  This implies that the Ap\'ery set, the Frobenius number, and the genus of $S(a,b)\in\mathcal{Q}_0^\infty$ can be written purely in terms of $\mu$ rather than $\mu_{a,b}$.  However, in order to prove that, we need more information than we presently have about the values of $\mu$.

In addition to that upcoming application of $\mu(n)$, the $\mu(n)$ sequence is of some interest in its own right, as it is related to integer partitions, so it is a worthwhile exercise to investigate its values.

From the definition of $\mu(n)$, we can see that each $\mu(n)$ is the optimal solution of an integer linear program:

\[\mu(n)=\min\Set{1\lambda_{1}+2\lambda_{2}+\ldots |  \lambda_1,\lambda_2,\ldots\in\mathbb{N}_0, n={1 \choose 2}\lambda_{1}+{2 \choose 2}\lambda_{2}+\ldots}.\]

Finding the optimum value of an integer linear program is known to be NP-hard \cite{papadimitriou81}, so it is not reasonable to expect that we can find a closed formula for $\mu(n)$ as a function of $n$.  We only know of one case in which a closed formula is known, which is shown in Corollary \ref{muOfIChoose2}.  Thus, we turn our attention now to developing upper and lower bounds for $\mu$.

First, let us  define and look at the properties of a function that will be of much use throughout the remainder of this section and the next.  This function is the inverse of the $x \choose 2$ function for $x\in\mathbb{N}$.

\begin{definition}
The function $f:\mathbb{N}\to\mathbb{R}$ is given by $f(x)=\frac{1+\sqrt{8x+1}}{2}$.
\end{definition}

\begin{lemma}\label{flemma}
If $x\in\mathbb{N}$, then $f\left({x \choose 2}\right)=x$ and ${f(x) \choose 2}=x$.
\end{lemma}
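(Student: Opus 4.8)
The plan is to verify both identities by direct algebraic computation, interpreting ${y \choose 2}$ as the real-valued expression $\frac{y(y-1)}{2}$ so that it remains meaningful even when the argument $f(x)$ is irrational (which it is unless $x$ happens to itself be a value of ${k \choose 2}$). The two claims together say precisely that $f$ and ${\cdot \choose 2}$ are mutual inverses, so the whole lemma reduces to simplifying two nested expressions and confirming each collapses to $x$.

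First I would prove $f\left({x \choose 2}\right) = x$. Substituting ${x \choose 2} = \frac{x(x-1)}{2}$ into the radicand appearing in $f$ gives $8{x \choose 2} + 1 = 4x(x-1) + 1 = 4x^2 - 4x + 1 = (2x-1)^2$. The crucial observation is that this radicand is a perfect square. Taking the square root yields $\sqrt{(2x-1)^2} = |2x-1|$, and since $x \in \mathbb{N}$ we have $2x - 1 \geq 1 > 0$, so the absolute value is simply $2x - 1$. Plugging back into the definition then gives $f\left({x \choose 2}\right) = \frac{1 + (2x-1)}{2} = x$, as desired.

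Next I would prove ${f(x) \choose 2} = x$. Writing $t = f(x) = \frac{1 + \sqrt{8x+1}}{2}$, I would compute $t(t-1)$, noting that $t - 1 = \frac{-1 + \sqrt{8x+1}}{2}$, so the product simplifies by difference of squares: $t(t-1) = \frac{(\sqrt{8x+1})^2 - 1}{4} = \frac{8x}{4} = 2x$. Dividing by $2$ then gives ${f(x) \choose 2} = \frac{t(t-1)}{2} = x$.

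The computations are routine, so there is no deep obstacle here; the one point demanding care is the sign of the square root in the first identity, where one must invoke $x \geq 1$ in order to replace $|2x-1|$ by $2x-1$. This is exactly why the hypothesis is $x \in \mathbb{N}$ rather than $x \in \mathbb{N}_0$: the value $x = 0$ would force $|2x-1| = 1 \neq 2x-1$, so restricting to $\mathbb{N}$ is precisely what keeps $2x-1$ positive and makes the inversion clean.
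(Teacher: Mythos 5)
Your proof is correct and follows essentially the same route as the paper's: both verify $f\left({x\choose 2}\right)=x$ by recognizing the radicand as $(2x-1)^2$ and using $x\geq 1$ to resolve the absolute value, and both verify ${f(x)\choose 2}=x$ by a difference-of-squares computation. Your closing remark on why the hypothesis must be $x\in\mathbb{N}$ rather than $x\in\mathbb{N}_0$ is a nice touch not made explicit in the paper, but the argument is the same.
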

\begin{proof}
First we will show that if $x\geq 1$, then $f\left({x \choose 2}\right)=x$. Since ${ x \choose 2}=\frac{x(x-1)}{2}$, $f\left({ x \choose 2}\right)=f\left(\frac{x(x-1)}{2}\right)$.
\begin{align*}
f\left(\frac{x(x-1)}{2}\right)&=\frac{1+\sqrt{8\left(\frac{x(x-1)}{2}\right)+1}}{2} =\frac{1+\sqrt{4x^2-4x+1}}{2} \\
&=\frac{1+\sqrt{(2x-1)^2}}{2} =\frac{1+|2x-1|}{2}=\frac{1+2x-1}{2}  =\frac{2x}{2}=x
\end{align*}
Next we will show that if $x\geq 1$, then ${f(x) \choose 2}=x$. We know that ${f(x) \choose 2}=\frac{(f(x))(f(x)-1)}{2}$. So,
\begin{align*}
\frac{1}{2}(f(x))(f(x)-1)&=\frac{1}{2}\left(\frac{1+\sqrt{8x+1}}{2}\right)\left(\frac{1+\sqrt{8x+1}}{2}-1\right) \\
&=\frac{1}{2}\left(\frac{1+\sqrt{8x+1}}{2}\right)\left(\frac{-1+\sqrt{8x+1}}{2}\right) \\
&=\frac{-1+8x+1}{8} =\frac{8x}{8}=x
\end{align*}
\end{proof}

With the $f$ function at our disposal, a lower bound on $\mu$ is quite straightforward to find, and this bound is in fact tight for infinitely many values of $n$, as will be shown in Corollary \ref{muOfIChoose2}.

\begin{theorem}\label{fBound}
For all $n\in\mathbb{N}$, $\mu(n)\geq f(n)$.
\end{theorem}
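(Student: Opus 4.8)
The plan is to reduce the bound $\mu(n)\ge f(n)$ to an inequality purely about the binomial coefficient $\binom{\cdot}{2}$. Since $f(x)=\frac{1+\sqrt{8x+1}}{2}$ is monotonically increasing as a real function and, by Lemma \ref{flemma}, satisfies $f\!\left(\binom{m}{2}\right)=m$ for every $m\in\mathbb{N}$, it suffices to show that $\binom{\mu(n)}{2}\ge n$. Applying $f$ to both sides of that inequality then yields $\mu(n)=f\!\left(\binom{\mu(n)}{2}\right)\ge f(n)$ in one stroke.

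To prove $\binom{\mu(n)}{2}\ge n$, I would unpack the definition of membership in $T$. Writing $m=\mu(n)$, the fact that $(m,n)\in T$ means there exist $\lambda_i\in\mathbb{N}_0$ with
\[ m=\sum_i i\,\lambda_i \qquad\text{and}\qquad n=\sum_i \binom{i}{2}\lambda_i. \]
The key fact I would invoke is the superadditivity of $\binom{\cdot}{2}$: for nonnegative integers $a$ and $b$, $\binom{a+b}{2}=\binom{a}{2}+\binom{b}{2}+ab\ge\binom{a}{2}+\binom{b}{2}$. Iterating this over all the generators counted with multiplicity---that is, regarding $n$ as a sum of $\binom{i}{2}$ taken $\lambda_i$ times---gives
\[ n=\sum_i\binom{i}{2}\lambda_i\le\binom{\sum_i i\,\lambda_i}{2}=\binom{m}{2}, \]
which is exactly the inequality needed. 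Equivalently, I could argue directly from $m^2=\left(\sum_i i\lambda_i\right)^2\ge\sum_i i^2\lambda_i$ (using $\lambda_i^2\ge\lambda_i$ for integers together with the nonnegative cross terms) and then subtract $m=\sum_i i\lambda_i$ to obtain $m(m-1)\ge\sum_i i(i-1)\lambda_i$, i.e.\ $\binom{m}{2}\ge n$.

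The main obstacle is really just this superadditivity step, and it is mild; the only care required is to handle the multiplicities $\lambda_i$ correctly and to confirm the two edge conditions that make the final application of $f$ legitimate. These are that $\mu(n)\ge 1$ for $n\in\mathbb{N}$, so that Lemma \ref{flemma} applies (this holds because $(0,n)\in T$ forces $n=0$, whereas $n\ge 1$), and that $\binom{m}{2}\ge n\ge 1$ lies in the range where $f$ is increasing and inverts $\binom{\cdot}{2}$. With those checks in place, the chain $\mu(n)=f\!\left(\binom{\mu(n)}{2}\right)\ge f(n)$ closes the argument.
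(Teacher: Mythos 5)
Your proposal is correct and rests on exactly the same key fact as the paper's proof: the superadditivity of $\binom{\cdot}{2}$ applied to the decomposition of $(\mu(n),n)$ into generators of $T$, which yields $\binom{\mu(n)}{2}\ge n$. The only difference is presentational—the paper argues by contradiction (assuming $\mu(n)<f(n)$ and deriving $\binom{\mu(n)}{2}<n$) while you argue directly and then apply the increasing function $f$—so the two arguments are essentially identical.
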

\begin{proof}
Assume there is some $n\in \mathbb{N}_0$ such that $\mu(n)<f(n)$. Let $\mu(n)=m$. Then since $(m,n)\in T$, from the definition of $T$, there must be some $c_1,c_2,\ldots,c_k$ for $k\in\mathbb{N}_0$ such that 
\[m=1c_1+2c_2+\ldots+kc_k\] and \[n=c_1{1 \choose 2}+c_2{2\choose 2}+\ldots+c_k{k \choose 2}.\]

Since $m<f(n)$, and the function ${x\choose 2}$ is increasing when $x\geq 1$, and $m,f(n)\geq 1$, we can apply this function to both sides of $m<f(n)$ to get 
${m\choose 2}<n$. From our $m$ and $n$ equations, we can write ${m \choose 2}<n$ as \[{1c_1+2c_2+\ldots+kc_k \choose 2}<c_1{1\choose2}+c_2{2\choose 2}+\ldots+c_k{k \choose 2}.\] However, the ${x \choose 2}$ function is convex, so it is superadditive, which means that that ${x+y \choose 2} \geq {x \choose 2}+{y \choose 2}$ for all $x,y\in\mathbb{R}$, so \[{1c_1+2c_2+\ldots+kc_k \choose 2} \geq c_1{1 \choose 2}+ c_2{2\choose 2}+\ldots+c_k{k\choose 2}.\] So we have a contradiction, meaning our assumption was false. So $\mu(n) \geq f(n)$.
\
\end{proof}

In fact, the bound in the previous theorem is tight for infinitely many values of $n$, due to the following corollary.  This is the only infinite family of values $n$ for which we can calculate the exact value of $\mu(n)$ without resorting to recursion.

\begin{corollary}\label{muOfIChoose2}
For all $i\in\mathbb{N}$ where $i\geq 2$, $\mu\left(i\choose 2\right)=i$.
\end{corollary}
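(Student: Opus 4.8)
The plan is to prove the equality by sandwiching $\mu\left({i \choose 2}\right)$ between $i$ from below and $i$ from above, with essentially all of the analytic work already carried out in Theorem \ref{fBound}. First I would restrict attention to $i \geq 2$, which guarantees ${i \choose 2} \geq 1$, so that ${i \choose 2} \in \mathbb{N}$ and Theorem \ref{fBound} is applicable to the argument $n = {i \choose 2}$.

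For the lower bound, I would set $n = {i \choose 2}$ in Theorem \ref{fBound} to obtain $\mu\left({i \choose 2}\right) \geq f\left({i \choose 2}\right)$, and then invoke Lemma \ref{flemma}, which states that $f\left({i \choose 2}\right) = i$ for $i \in \mathbb{N}$. This immediately yields $\mu\left({i \choose 2}\right) \geq i$. For the upper bound, I would recall that $z_i = \left(i, {i \choose 2}\right)$ is one of the defining generators of $T$, so $\left(i, {i \choose 2}\right) \in T$; since $\mu\left({i \choose 2}\right)$ is defined as the minimum of all $m \in \mathbb{N}_0$ with $\left(m, {i \choose 2}\right) \in T$, the value $m = i$ is a candidate in the set over which the minimum is taken, forcing $\mu\left({i \choose 2}\right) \leq i$. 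Combining the two inequalities gives $\mu\left({i \choose 2}\right) = i$.

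There is no substantial obstacle remaining at this stage: the analytic content lives entirely in the superadditivity argument behind Theorem \ref{fBound} and in the algebraic identity of Lemma \ref{flemma}, both of which are already available. The only point requiring care is the hypothesis $i \geq 2$, which is exactly what places the argument ${i \choose 2}$ in the domain $\mathbb{N}$ of $f$ and of Theorem \ref{fBound}; for $i = 1$ one has ${1 \choose 2} = 0$ and $\mu(0) = 0 \neq 1$, so the restriction is genuinely necessary rather than cosmetic.
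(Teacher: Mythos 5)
Your proposal is correct and follows essentially the same route as the paper: the lower bound $\mu\left({i \choose 2}\right)\geq i$ comes from Theorem \ref{fBound} together with Lemma \ref{flemma} in both cases, and your upper bound via the generator $z_i=\left(i,{i\choose 2}\right)\in T$ is exactly the observation underlying the paper's invocation of Corollary \ref{recursiveBound2} with $n={i\choose 2}$, just applied directly rather than through the corollary. Your remark that $i\geq 2$ is needed because $\mu(0)=0\neq 1$ is also a correct and worthwhile observation.
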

\begin{proof}
Let $i\in\mathbb{N}$ and $i\geq 2$.  Then ${i\choose 2}\in\mathbb{N}$.  Theorem \ref{fBound} states that for all $n \in \mathbb{N}$, $\mu(n) \geq f(n)$. So, $\mu\left({i \choose 2}\right)\geq f\left({i \choose 2}\right)$. Also, according to Lemma \ref{flemma}, $f\left({i \choose 2}\right)=i$. Therefore, $\mu\left({i \choose 2}\right) \geq i$. However, Corollary \ref{recursiveBound2} states that for all $n,i\in\mathbb{N}_0$, if ${i \choose 2}\leq n$, then
\[\mu(n)\leq \mu\left(n-{i \choose 2}\right)+i\]
So, when $n={i\choose 2}$, 
\[\mu\left({i \choose 2}\right)\leq \mu\left({i \choose 2}-{i \choose 2}\right)+i\]
Simplifying, we obtain
\[\mu\left({i \choose 2}\right)\leq \mu(0)+i\]
Since $\mu(0)=0$, we have $\mu\left({i \choose 2}\right)\leq i$.  Therefore, we can conclude that $\mu({i\choose 2})=i$.
\end{proof}

A tight upper bound is far more difficult to find.  We will begin by proving what we refer to as the Gauss bound, because we will use the so-called ``Eureka'' theorem of Gauss (see \cite{nathanson87} for a modern treatment) to prove it.  This is a celebrated result of Gauss which says that any natural number can be written as the sum of three triangular numbers.  Triangular numbers are those numbers which are equal to $i \choose 2$ for some $i\in\mathbb{N}_0$.

Although this bound is not tight, it is closed and non-recursive, so it is easy to work with, and it is sufficient to prove various useful results that appear in the following sections.

\begin{theorem}[Gauss bound]\label{GaussBound}
For all $n\in\mathbb{N}_0$, $\mu(n)\leq 3f\left(\frac{n}{3}\right)$.
\end{theorem}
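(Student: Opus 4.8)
The plan is to combine Gauss's Eureka theorem with the subadditivity of $\mu$ (Theorem \ref{recursiveBounds}) and the concavity of $f$. First I would use the Eureka theorem to write $n = \binom{i}{2} + \binom{j}{2} + \binom{k}{2}$ for some $i,j,k \in \mathbb{N}_0$, and I would arrange without loss of generality that $i,j,k \geq 1$. This costs nothing, since $\binom{0}{2} = \binom{1}{2} = 0$, so any index equal to $0$ can be raised to $1$ without changing the triangular number it contributes to the sum. The rough idea is then that each triangular summand $\binom{i}{2}$ contributes at most $i$ to $\mu(n)$, and the sum $i+j+k$ is controlled by the average of the summands through the concavity of $f$.

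Next, applying the subadditive bound of Theorem \ref{recursiveBounds} twice gives $\mu(n) \leq \mu\left(\binom{i}{2}\right) + \mu\left(\binom{j}{2}\right) + \mu\left(\binom{k}{2}\right)$. Corollary \ref{muOfIChoose2} evaluates $\mu\left(\binom{i}{2}\right) = i$ whenever $i \geq 2$, while for $i = 1$ the argument is $\binom{1}{2} = 0$ and $\mu(0) = 0 \leq 1$; in either case $\mu\left(\binom{i}{2}\right) \leq i$, so that $\mu(n) \leq i + j + k$. It then remains to bound $i+j+k$ by $3f\left(\frac{n}{3}\right)$. Because $i,j,k \geq 1$, Lemma \ref{flemma} lets me rewrite each index as $i = f\left(\binom{i}{2}\right)$, turning the target inequality into $f\left(\binom{i}{2}\right) + f\left(\binom{j}{2}\right) + f\left(\binom{k}{2}\right) \leq 3 f\left(\frac{\binom{i}{2} + \binom{j}{2} + \binom{k}{2}}{3}\right)$, which is exactly Jensen's inequality for the concave function $f$ evaluated at the three points $\binom{i}{2}, \binom{j}{2}, \binom{k}{2}$ whose average is $\frac{n}{3}$. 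Chaining the two inequalities yields $\mu(n) \leq i+j+k \leq 3f\left(\frac{n}{3}\right)$, as desired.

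The main thing to pin down is the \emph{concavity of $f$}, since the whole argument rests on Jensen's inequality. I would establish it by differentiating the defining formula: $f'(x) = 2(8x+1)^{-1/2}$ and $f''(x) = -8(8x+1)^{-3/2} < 0$ on the relevant range, so $f$ is concave (equivalently, $f$ is concave because it is the inverse of the increasing convex map $x \mapsto \binom{x}{2}$). The only other delicate point is the behavior at small indices: Corollary \ref{muOfIChoose2} requires $i \geq 2$ and Lemma \ref{flemma} requires $i \geq 1$, so the cases $i \in \{0,1\}$ must be handled by hand exactly as above, and the initial reduction to $i,j,k \geq 1$ is precisely what makes Lemma \ref{flemma} applicable in the concavity step. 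I would also remark that $f\left(\frac{n}{3}\right)$ is interpreted via the formula $\frac{1+\sqrt{8x+1}}{2}$ extended from $\mathbb{N}$ to the nonnegative reals, which is harmless since this expression is defined for all $x \geq 0$.
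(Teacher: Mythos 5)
Your proof is correct, and it shares the paper's main idea---invoking Gauss's Eureka theorem to write $n=\binom{i}{2}+\binom{j}{2}+\binom{k}{2}$ and then bounding $\mu(n)$ by $i+j+k$---but it executes both halves of the argument differently. For the first half, the paper gets $\mu(n)\leq i+j+k$ directly from the integer-linear-program description of $\mu$ (the Eureka decomposition is itself an admissible representation of $(i+j+k,n)$ in $T$), whereas you route it through the subadditivity of $\mu$ (Theorem \ref{recursiveBounds}) and the exact values $\mu\bigl(\binom{i}{2}\bigr)=i$; both are valid and neither introduces circularity, since Corollary \ref{muOfIChoose2} is proved independently of the Gauss bound, though your route is slightly longer than necessary. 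For the second half, the paper bounds $i+j+k$ by maximizing $x+y+z$ over the real sphere $\binom{x}{2}+\binom{y}{2}+\binom{z}{2}=n$ via a tangent-plane argument, while you observe that $i+j+k=f\bigl(\binom{i}{2}\bigr)+f\bigl(\binom{j}{2}\bigr)+f\bigl(\binom{k}{2}\bigr)$ and apply Jensen's inequality to the concave function $f$. These are two faces of the same optimization, but your version is cleaner: it avoids the geometric digression, needs only $f''<0$, and makes explicit the small-index bookkeeping (the reduction to $i,j,k\geq 1$ and the extension of $f$ to the nonnegative reals) that the paper glosses over. Your handling of the edge cases $i\in\{0,1\}$ and of $n=0$ is sound.
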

\begin{proof}
By definition,
\[\mu(n)=\min\Set{1\lambda_{1}+2\lambda_{2}+\ldots |  \lambda_1,\lambda_2,\ldots\in\mathbb{N}_0, n={1 \choose 2}\lambda_{1}+{2 \choose 2}\lambda_{2}+\ldots}\]

Hence, if there exist $\lambda_1,\lambda_2,\ldots\in\mathbb{N}_0$ such that $n={1 \choose 2}\lambda_1+{2 \choose 2}\lambda_2+\dots$, we have $\mu(n)\leq 1\lambda_1+2\lambda_2+\dots$.  Gauss' Eureka theorem states that there exist $x,y,z\in\mathbb{N}_0$ such that $n={x \choose 2}+{y \choose 2}+{z \choose 2}$, hence

\begin{align*}
\mu(n)
&\leq\min\Set{x+y+z |  x,y,z\in\mathbb{N}_0, n={x \choose 2}+{y \choose 2}+{z \choose 2}} \\
&\leq\max\Set{x+y+z |  x,y,z\in\mathbb{N}_0, n={x \choose 2}+{y \choose 2}+{z \choose 2}} \\
&\leq\max\Set{x+y+z |  x,y,z\in\mathbb{R}, n={x \choose 2}+{y \choose 2}+{z \choose 2}} \\
\end{align*}

We now simply need to find the maximum of $x+y+z$ over the reals, subject to the constraint $n={x \choose 2}+{y \choose 2}+{z \choose 2}$, which is a straightforward optimization problem that can solved with analysis.

We can make this optimization problem easier by noting that the constraint $n={x \choose 2}+{y \choose 2}+{z \choose 2}$ is a sphere.  The level sets of the objective function $x+y+z$, on the other hand, are planes.  Hence, the minimum and maximum values of the objective function will occur at the two level sets of $x+y+z$ whose planes are tangent to the sphere.

The two points at which tangency occurs are when $x=y=z=f(n/3)$, which yields the maximum value of $x+y+z=3f(n/3)$ and $x=y=z=1-f(n/3)$, which yields the minimum value of $x+y+z=3-3f(n/3)$.  Hence $\mu(n)\leq 3f(n/3)$.

\end{proof}

The advantage of the Gauss bound is that the formula is closed and easy to write.  The disadvantage is that the bound does not seem to be very good---in fact, we have not encountered any value of $n$ for which the Gauss bound is tight.  Numerical evidence for the looseness of the Gauss bound is shown in Figure \ref{bounds}.

\begin{figure}
  \centering
      \includegraphics[width=1\textwidth]{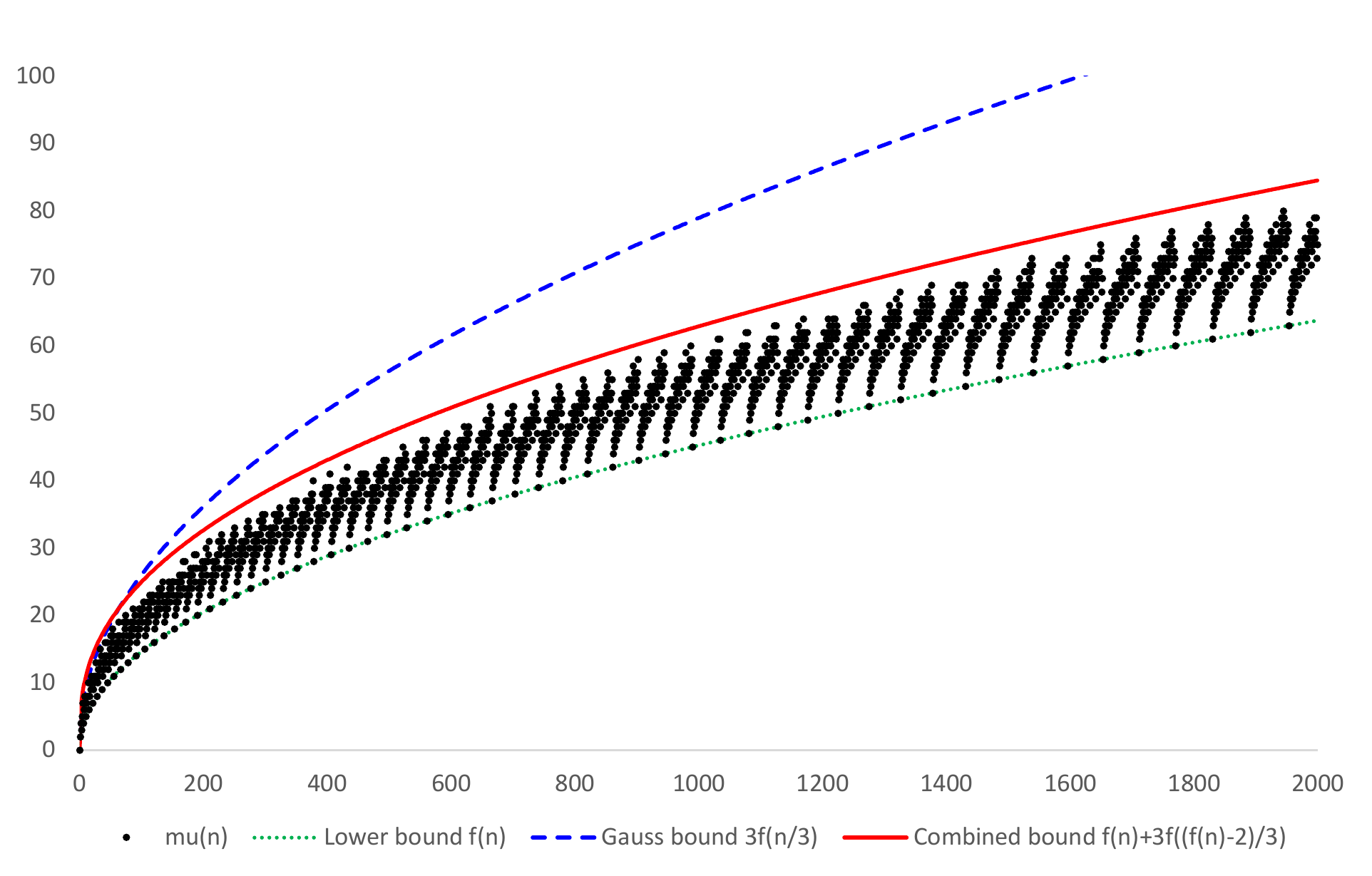}
  \caption{$\mu(n)$ for $n<2000$, along with various upper and lower bounds.}
   \label{bounds}
\end{figure}

The upper recursive bound given in Corollary \ref{recursiveBound2}, on the other hand, seems to be much tighter; however, the difficulty in using the recursive upper bound arises from the fact that it is recursive.  We can, for example, choose the largest value of $i\in\mathbb{N}_0$ such that ${i \choose 2}\leq n$, which is $i=\lfloor f(n)\rfloor$, and then use the recursive bound to say that
\[\mu(n)\leq \lfloor f(n)\rfloor + \mu\left(n-{\lfloor f(n)\rfloor \choose 2}\right)\]

However, we now have to bound the second term, which is another value of $\mu$, using either the Gauss bound or another instance of the recursive upper bound.  If we use the recursive upper bound repeatedly, the formula becomes increasingly unwieldy and difficult to understand.  Applying the recursive bound repeatedly is essentially similar to whittling away at the argument of $\mu$, subtracting as large a chunk as possible at each application of the recursive bound.  It is also difficult to know how many applications of the recursive upper bound are necessary before the argument is whittled down to zero.

In spite of these difficulties, we did pursue this approach at some length.  Unfortunately, the formula for the upper bound that results from exhaustive applications of the recursive upper bound is also recursive, as we have just demonstrated.  Furthermore, the formula is so complicated and difficult to calculate that, from a human perspective, one might as well just compute the exact values of $\mu$ using the recursive formula in Theorem \ref{recursiveExact}.  The only possible advantage to computing the bound rather than $\mu$ itself would be that the time complexity of computing the bound is of a lower order than that of computing $\mu$, so from a computational perspective, it might have some benefit.  Furthermore, all our attempts to express the resulting bound in a closed form by loosening it resulted in worse bounds than the Gauss bound.

Nevertheless, we will give one final bound in this section, which is the result of combining one application of the recursive bound with the Gauss bound.  The formula for this bound is complicated, but it is not recursive, and it is much tighter than the Gauss bound, as shown in Figure \ref{bounds}.

\begin{theorem}[Combined bound]\label{combined bound}
For all $n\in\mathbb{N}$,
\[\mu(n)\leq f(n)+3f\left(\frac{f(n)-2}{3}\right)\]
\end{theorem}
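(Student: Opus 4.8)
The plan is to apply the recursive upper bound of Corollary \ref{recursiveBound2} exactly once, stripping off the largest triangular number below $n$, and then to control the single leftover $\mu$-value with the Gauss bound of Theorem \ref{GaussBound}. Concretely, set $i=\lfloor f(n)\rfloor$; by Lemma \ref{flemma} together with the monotonicity of $f$, this is the largest $i\in\mathbb{N}_0$ with $\binom{i}{2}\le n$, so Corollary \ref{recursiveBound2} gives $\mu(n)\le i+\mu\!\left(n-\binom{i}{2}\right)$. Applying Theorem \ref{GaussBound} to the remaining term yields the intermediate inequality
\[\mu(n)\le \lfloor f(n)\rfloor+3f\!\left(\frac{n-\binom{\lfloor f(n)\rfloor}{2}}{3}\right),\]
so the entire task reduces to bounding this right-hand side by the claimed closed form.

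For that reduction I would write $t=f(n)$ and $\theta=t-\lfloor t\rfloor\in[0,1)$. The first term is handled immediately by $\lfloor f(n)\rfloor\le f(n)$, at a cost of exactly $\theta$ of slack. For the residual, I would use the identity $\binom{f(n)}{2}=n$ from Lemma \ref{flemma} to write $n-\binom{i}{2}=\binom{t}{2}-\binom{i}{2}$ and factor it as $\frac{(t-i)(t+i-1)}{2}=\frac{\theta(2t-\theta-1)}{2}$. After substituting and invoking the monotonicity of $f$, the target inequality collapses to the single scalar statement
\[3\left[f\!\left(\frac{\theta(2t-\theta-1)}{6}\right)-f\!\left(\frac{t-2}{3}\right)\right]\le\theta.\]

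The main obstacle is this scalar inequality, and its subtlety is that one cannot prove it by bounding the two terms of the intermediate inequality separately: the residual $n-\binom{\lfloor f(n)\rfloor}{2}$ can strictly exceed $f(n)-2$ (for instance at $n=5$ the residual is $2$ while $f(5)-2\approx1.70$), so the required cancellation must come from the slack $\theta$ gained in replacing $\lfloor f(n)\rfloor$ by $f(n)$. Since $f$ is concave (its second derivative $-8(8x+1)^{-3/2}$ is negative), I would bound the bracketed difference above by the tangent-line estimate $f'\!\left(\frac{t-2}{3}\right)\bigl(r-(t-2)\bigr)$, where $r=\frac{\theta(2t-\theta-1)}{2}$. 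Writing $f'\!\left(\frac{t-2}{3}\right)=\frac{2\sqrt{3}}{\sqrt{8t-13}}$, this slope is at most $1$ once $t\ge\frac{25}{8}$, and an elementary monotonicity argument shows $r-(t-2)\le\theta$ for $\theta\in[0,1)$ in the same range; together these give $f'\!\left(\frac{t-2}{3}\right)(r-(t-2))\le\theta$, as needed. Since $t=f(n)\ge\frac{25}{8}$ already for $n\ge4$, it then remains only to check $n\in\{1,2,3\}$ directly against the exact values furnished by Theorem \ref{recursiveExact}, which is immediate.
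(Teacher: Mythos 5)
Your proposal is correct, and its opening moves are exactly the paper's: apply Corollary \ref{recursiveBound2} with $i=\lfloor f(n)\rfloor$, drop the floor on the first term, and control the residual with the Gauss bound, arriving at the same intermediate inequality. Where you diverge is in passing from that intermediate inequality to the stated closed form, and here your route is not just different but materially better. The paper argues that the residual $n-\binom{\lfloor f(n)\rfloor}{2}$ is an integer strictly less than $f(n)-1$ and can therefore be ``tightened'' to $\leq f(n)-2$; since $f(n)-1$ is generally irrational, that integrality step is invalid, and your example $n=5$ (residual $2$ versus $f(5)-2\approx 1.70$) is in fact a counterexample to the paper's own intermediate claim, not merely to a naive term-by-term comparison. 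Your completion --- retaining the slack $\theta=f(n)-\lfloor f(n)\rfloor$, writing the residual as $\frac{\theta(2t-\theta-1)}{2}$ with $t=f(n)$, and using concavity of $f$ together with the slope bound $f'\left(\frac{t-2}{3}\right)\leq 1$ and the estimate $r-(t-2)\leq\theta$ for $t\geq\frac{25}{8}$ --- correctly shows that the increase in the Gauss term is repaid by the slack $\theta$, and the remaining cases $n\in\{1,2,3\}$ check directly. Two small points of care: the tangent-line estimate bounds the bracketed difference by $\frac{1}{3}f'\left(\frac{t-2}{3}\right)\left(r-(t-2)\right)$ rather than by $f'\left(\frac{t-2}{3}\right)\left(r-(t-2)\right)$, but since what you need is that three times the bracket is at most $\theta$, the factor of $3$ is absorbed exactly where your final chain uses it; and you should note explicitly that when $r<t-2$ the bracketed difference is negative, so the inequality is immediate there. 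With those remarks your argument is complete, and it is the version of this proof that actually closes.
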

\begin{proof}
Let $n\in\mathbb{N}$.  We know from Lemma \ref{flemma} that for $n\in\mathbb{N}$, ${f(n) \choose 2}=n$.  Since $x \choose 2$ is an increasing function for $x\geq 1$ and $f(n)\geq 1$ for $n\in\mathbb{N}_0$, this implies ${\lfloor f(n)\rfloor \choose 2}\leq {f(n) \choose 2}$.  Hence ${\lfloor f(n)\rfloor \choose 2}\leq n$, and so we can apply the recursive bound (Theorem \ref{recursiveBounds}) with $i=\lfloor f(n)\rfloor$.  Doing so yields
\[\mu(n)\leq\lfloor f(n)\rfloor+\mu\left(n-{\lfloor f(n)\rfloor\choose 2}\right)\]
We can then drop the floor and apply the Gauss bound to the second term to get
\[\mu(n)\leq f(n)+ 3f\left(\frac{n-{\lfloor f(n)\rfloor\choose 2}}{3}\right)\]

Now we would like to get rid of the remaining floor function, starting with the observation that $\lfloor f(n)\rfloor>f(n)-1$. Since $f(n)\geq 2$ when $n\geq 1$, both sides of the inequality $\lfloor f(n)\rfloor>f(n)-1$ are greater than or equal to $1$.  Since the $x\choose 2$ function is increasing for $x\geq 1$, we can apply the $x\choose 2$ function to both sides of $\lfloor f(n)\rfloor>f(n)-1$ to get
\begin{align*}
    {\lfloor f(n)\rfloor\choose 2}>{f(n)-1\choose 2}&=\frac{1}{2}(f(n)-1)(f(n)-2) \\
    &=\frac{1}{2}(f(n)-1)f(n)-\frac{1}{2}(f(n)-1)(2) \\
    &={f(n)\choose 2}-(f(n)-1)\\
    &=n-(f(n)-1)
\end{align*}
Hence
\[n-{\lfloor f(n)\rfloor\choose 2} < n-(n-(f(n)-1))=f(n)-1\]
Since $n-{\lfloor f(n)\rfloor\choose 2}$ is an integer, we can tighten this to
\[n-{\lfloor f(n)\rfloor\choose 2} \leq f(n)-2\]

Since $f$ is an increasing function, this yields
\[\mu(n)\leq f(n)+3f\left(\frac{f(n)-2}{3}\right)\]
\end{proof}

\section{The Relationship between $\mu$ and $\mu_{a,b}$}\label{mu=mu_a,b section}

At this point in the paper, we have now established enough theorems to show the oft-mentioned fact that $\mu(n)=\mu_{a,b}(n)$ when $0\leq n<a$, except for eight particular values of $(a,n)$.  However, the proof of this theorem is quite long and involved, so, in order to make it easier to read, we have broken off some portions of the proof and made them into lemmas.  These lemmas are quite technical, and probably of no particular interest apart from supporting the proof of the theorem.

\begin{lemma}\label{mu=mu(a,b) small cases}
For all $S(a,b)\in\mathcal{Q}_0^\infty$ with $a,b\geq 1$,
\begin{enumerate}
    \item $\mu_{a,b}(0)=0=\mu(0)$
    \item If $a\geq 2$, then $\mu_{a,b}(1)=2=\mu(1)$
    \item If $a\geq 3$, then $\mu_{a,b}(2)=4=\mu(2)$
\end{enumerate}
\end{lemma}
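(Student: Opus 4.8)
The three $\mu$ values are immediate from results already in hand, so the real content of the lemma is the three assertions about $\mu_{a,b}$. Indeed, $\mu(0)=0$ because $z_0=\left(0,\binom{0}{2}\right)=(0,0)\in T$; $\mu(1)=2$ is exactly the case $i=2$ of Corollary \ref{muOfIChoose2}, since $\binom{2}{2}=1$; and $\mu(2)=4$ follows from Theorem \ref{recursiveExact} with $i=2$ (giving $\mu(2)=\mu(1)+2$), or directly by noting that $\binom{n}{2}\geq 3$ for $n\geq 3$ forces any representation of the value $2$ in the second coordinate of $T$ to use $\lambda_2=2$. My plan is then to prove all three $\mu_{a,b}$ claims with one uniform strategy.

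The strategy is to translate membership in $S(a,b)$ into the lifted monoid $T$. From the definition of $\mu_{a,b}$, and because $a>0$, the quantity $\mu_{a,b}(n)a+nb$ is the least element of $S(a,b)$ lying in the residue class $nb \pmod a$: the integers $ma+nb$ with $m\in\mathbb{Z}$ are precisely all integers $\equiv nb \pmod a$, and $S(a,b)\subseteq\mathbb{N}_0$. By Theorem \ref{PhiFunction}, every $s\in S(a,b)$ can be written $s=Aa+Bb$ with $(A,B)\in T$. Reducing mod $a$ and using $\gcd(a,b)=1$ forces $B\equiv n \pmod a$, while the definition of $\mu(B)$ gives $A\geq\mu(B)$, hence $s\geq\mu(B)a+Bb$ for any such representation. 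Thus, to bound $\mu_{a,b}(n)$ from below it suffices to minimize $\mu(B)a+Bb$ over all $B\in\mathbb{N}_0$ with $B\equiv n \pmod a$, and to exhibit a matching element of $S(a,b)$.

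Part 1 is immediate: $0\in S(a,b)$ gives $\mu_{a,b}(0)\leq 0$, and $S(a,b)\subseteq\mathbb{N}_0$ together with $a\geq 1$ rules out negative $m$, so $\mu_{a,b}(0)=0$. For part 2, take $n=1$ with $a\geq 2$: the generator $y_2=2a+b$ shows $\mu_{a,b}(1)\leq 2$, and since $1<a$ the admissible values of $B$ are $B=1$ or $B\geq a+1$. If $B=1$ then $s\geq\mu(1)a+b=2a+b$; if $B\geq a+1$, then Theorem \ref{fBound} and Lemma \ref{flemma} give $\mu(B)\geq f(B)\geq f(3)=3$, so $s\geq 3a+(a+1)b>2a+b$. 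For part 3, take $n=2$ with $a\geq 3$: then $2y_2=4a+2b\in S(a,b)$ gives $\mu_{a,b}(2)\leq 4$, and since $2<a$ the admissible values are $B=2$ or $B\geq a+2$. If $B=2$ then $s\geq\mu(2)a+2b=4a+2b$; if $B\geq a+2\geq 5$, then $\mu(B)\geq f(B)\geq f(5)>3$, forcing the integer $\mu(B)\geq 4$, and with $B-2\geq a$ this gives $s\geq 4a+(a+2)b>4a+2b$. In each part the least admissible element matches the exhibited generator, so $\mu_{a,b}(1)=2$ and $\mu_{a,b}(2)=4$.

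The main obstacle is the ``large $B$'' branch in parts 2 and 3: I must rule out that some representation with an inflated $b$-coordinate $B\geq a+n$ produces an element smaller than the obvious one. This is precisely where the lower bound $\mu(B)\geq f(B)$ of Theorem \ref{fBound} does the work, and it is the reason the hypotheses $a\geq 2$ and $a\geq 3$ appear: they guarantee that $B=n$ is the minimal nonnegative residue representative, while the next candidate $B\geq a+n$ is large enough for the $f$-bound to force $\mu(B)$ up to the required value. I expect the only care needed is checking that $f(3)=3$ and $f(5)>3$ give the exact integer thresholds, which follow directly from Lemma \ref{flemma}.
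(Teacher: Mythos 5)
Your proposal is correct, but it takes a genuinely different route from the paper's proof. The paper argues directly inside $S(a,b)$: to show $\mu_{a,b}(1)=2$, it supposes $ma+b\in S(a,b)$ with $m\leq 1$, observes that the only generator of $S(a,b)$ not exceeding $a+b$ is $y_1=a$ (since $y_2=2a+b>a+b$), so such an element would have to be a multiple of $a$, forcing $a\mid b$ and contradicting $\gcd(a,b)=1$; the case $\mu_{a,b}(2)=4$ is handled the same way with the two generators $a$ and $2a+b$ and a short divisibility argument. You instead lift everything through Theorem \ref{PhiFunction}: every $s\in S(a,b)$ is $\phi_{a,b}(A,B)$ with $(A,B)\in T$, the congruence $B\equiv n\pmod a$ follows from $\gcd(a,b)=1$, and then $s\geq\mu(B)a+Bb$, so you only need to compare the candidate $B=n$ against the candidates $B\geq a+n$, which you dispatch with the lower bound $\mu(B)\geq f(B)$ of Theorem \ref{fBound}. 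Both arguments are sound and non-circular (none of the results you invoke depend on this lemma). The paper's proof is more elementary and self-contained, using nothing beyond the generators of $S(a,b)$ and coprimality; yours is heavier machinery for these three tiny cases, but it is more uniform and in fact previews the exact technique the paper later uses in Lemmas \ref{mu(a,b)<=mu} and \ref{mu(a,b)(n+a)=mu(n+a)} (representing elements of $S(a,b)$ via $T$ and controlling the inflated residue $B=n+ka$ with the $f$-bound), so it generalizes more readily to larger $n$. The numerical checks you flag ($f(3)=3$, $f(5)>3$) are correct and do close the ``large $B$'' branch.
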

\begin{proof}
Let $S(a,b)\in\mathcal{Q}_0^\infty$ with $a,b\geq 1$. We know from the definition of $\mu_{a,b}(n)$ that $\mu_{a,b}(0)=\min\Set{m\in\mathbb{Z} | ma+(0)b\in S(a,b)}$. Since $0$ is the smallest element of $S(a,b)$, $\mu_{a,b}(0)$ is clearly $0$.

Now, assume $a\geq 2$. Since $2a+b\in S(a,b)$, we know that $\mu_{a,b}(1)\leq 2$. Suppose that $\mu_{a,b}(1)<2$. Then there is some $m\in\mathbb{Z}$ with $m\leq 1$ and $ma+b\in S(a,b)$. Then $ma+b\leq a+b$. The only generator less than or equal to $a+b$ is $a$, so $ma+b$ must be a multiple of $a$. So there is some $i\in\mathbb{N}$ such that $ma+b=ia$, which implies $b=(i-m)a$. However, since $a$ divides the right hand side, $a$ must also divide $b$, which is impossible. Thus $\mu_{a,b}(1)=2$.

Now, assume $a\geq 3$. Since $2(2a+b)=4a+2b\in S(a,b)$, we know that $\mu_{a,b}(2)\leq 4$. Suppose that $\mu_{a,b}(2)<4$. Then there is some $m\in\mathbb{Z}$ with $m\leq 3$ and $ma+2b\in S(a,b)$. Then $ma+2b\leq 3a+2b$. The only generators less than or equal to $3a+2b$ are $a$ and $2a+b$. So there are some $i,j\in\mathbb{N}$ such that $ma+2b=ia+j(2a+b)$. Furthermore, since $ma+2b\leq 3a+2b$, we must have $j\leq 1$, because $j\geq 2$ makes $ia+j(2a+b)\geq ia+4a+2b$, which is strictly greater than $ma+2b$. Rearranging to collect all copies of $a$ and $b$, we get $(2-j)b=(i+2j-m)a$. Since $a$ divides the right hand side, it must also divide the left hand side, and since $\gcd(a,b)=1$, $a$ must divide $2-j$. Since $0\leq j\leq 1$, that means $1\leq 2-j\leq 2$. This is impossible, however, because $a\geq 3$, so it cannot divide either 1 or 2. Hence $\mu_{a,b}(2)=4$.

The fact that $\mu(0)=0$, $\mu(1)=2$, and $\mu(2)=4$ is due to Theorem \ref{recursiveExact}.
\end{proof}

\begin{lemma}\label{mu(a,b)<=mu}
For all $S(a,b)\in\mathcal{Q}_0^\infty$ with $a,b\geq 1$ and all $n\in\mathbb{N}_0$, $\mu_{a,b}(n)\leq\mu(n)$.
\end{lemma}
\begin{proof}
Let $S(a,b)\in\mathcal{Q}_0^\infty$ with $a,b\geq 1$ and $n\in\mathbb{N}_0$. Also, let $(\mu(n),n) \in T$ such that $\mu(n)=\min\Set{m\in\mathbb{N} | (m,n)\in T}$, and let $\mu_{a,b}(n)a+nb \in S(a,b)$ such that $\mu_{a,b}(n)=\min\Set{m\in\mathbb{Z} | ma+nb\in S(a,b)}$. 

Suppose $\mu_{a,b}(n)>\mu(n)$. Since we defined $\phi_{a,b}: \mathbb{N}_0^2 \to \mathbb{N}_0$ as $\phi_{a,b}(m,n)=ma+nb$ and Theorem \ref{PhiFunction} states that the image of $T$ under $\phi_{a,b}$ is $S(a,b)$, $\phi_{a,b}(\mu(n),n)=\mu(n)a+nb \in S$. We assumed that $\mu_{a,b}(n)a+nb \in S(a,b)$ such that $\mu_{a,b}(n)=\min\Set{m\in\mathbb{Z} | ma+nb\in S(a,b)}$, but $\mu(n)<\mu_{a,b}(n)$ and $\mu(n) \in \mathbb{Z}$ and $\mu(n)a+nb \in S$, so this is a contradiction. Therefore, for all $n\in\mathbb{N}_0$, $\mu_{a,b}(n)\leq\mu(n)$.
\end{proof}

\begin{lemma}\label{mu(a,b)(n+a)=mu(n+a)}
For all $S(a,b)\in\mathcal{Q}_0^\infty$ with $a,b\geq 1$, if there exists $n\in\mathbb{N}_0$ such that $n<a$ and $\mu_{a,b}(n)<\mu(n)$, then $\mu_{a,b}(n+a)=\mu(n+a)$.
\end{lemma}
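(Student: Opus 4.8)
The plan is to translate the whole statement into a single inequality about the bare sequence $\mu$, and then to attack that inequality with the bounds from the previous section. Throughout, write $h(k)=\mu(n+ka)+kb$ for $k\in\mathbb{N}_0$. First I would record the identity
\[\mu_{a,b}(n)=\min\Set{h(k) | k\in\mathbb{N}_0}\qquad(0\le n<a).\]
The bound ``$\le$'' is immediate: by Theorem \ref{mu(a,b) of n+ia}, $\mu_{a,b}(n+ka)=\mu_{a,b}(n)-kb$, and by Lemma \ref{mu(a,b)<=mu}, $\mu_{a,b}(n+ka)\le\mu(n+ka)$, so $\mu_{a,b}(n)\le h(k)$ for every $k$. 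For ``$\ge$'', note that $\mu_{a,b}(n)a+nb\in S(a,b)=\phi_{a,b}[T]$ by Theorem \ref{PhiFunction}, so this element equals $Ma+Nb$ for some $(M,N)\in T$; since $\gcd(a,b)=1$ we get $N=n+ka$ and $M=\mu_{a,b}(n)-kb$ for some integer $k$, and $N\ge 0$ together with $n<a$ forces $k\ge 0$. As $(M,N)\in T$ gives $\mu(n+ka)\le M$, we obtain $h(k)\le\mu_{a,b}(n)$.

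Next I would isolate what must be shown. By Theorem \ref{mu(a,b) of n+ia} the conclusion $\mu_{a,b}(n+a)=\mu(n+a)$ is equivalent to $\mu(n+a)=\mu_{a,b}(n)-b$, that is, to $\mu_{a,b}(n)=h(1)$; and by Lemma \ref{mu(a,b)<=mu} we already have $\mu_{a,b}(n+a)\le\mu(n+a)$, so only $\mu(n+a)\le\mu_{a,b}(n)-b$ is at issue. In the language above, the hypothesis $\mu_{a,b}(n)<\mu(n)=h(0)$ says that the minimum of $h$ is \emph{not} attained at $k=0$, while the goal says that it \emph{is} attained at $k=1$. Everything therefore reduces to the single family of inequalities
\[h(1)\le h(k)\qquad\text{for all }k\ge 2,\]
which I will call $(\star)$; for then $\min_k h(k)=\min\{h(0),h(1)\}$, and since this minimum lies strictly below $h(0)$ by hypothesis, it must equal $h(1)$, giving $\mu_{a,b}(n)=h(1)=\mu(n+a)+b$ and hence $\mu(n+a)=\mu_{a,b}(n)-b=\mu_{a,b}(n+a)$.

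The inequality $(\star)$, written out, is $\mu(n+a)-\mu(n+ka)\le(k-1)b$, and it is genuinely the crux. It is \emph{false} without the hypothesis: one can exhibit pairs with $n<a$ for which $h(2)<h(1)$ (roughly, when $n+2a$ is triangular while $\mu(n+a)$ is large), but in every such pair the minimum of $h$ turns out to sit at $k=0$, so $\mu_{a,b}(n)=\mu(n)$ and the hypothesis fails. Thus the proof must use $\mu_{a,b}(n)<\mu(n)$ and $n<a$ together. My strategy for $(\star)$ is to bound $\mu(n+a)$ above by the combined bound (Theorem \ref{combined bound}) and $\mu(n+ka)$ below by $f$ (Theorem \ref{fBound}), while using the hypothesis to control the scale of $a$, $b$, and the minimizer. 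Concretely, if the minimizer is $k^*$ then $f(n+k^*a)+k^*b\le h(k^*)<\mu(n)$, and $\mu(n)$ is bounded above purely in terms of $n$ by the Gauss bound (Theorem \ref{GaussBound}); since $n<a$, this pins $n+k^*a$ below $\binom{\mu(n)}{2}$, so that $a$ and $b$ are bounded in terms of $n$ and only finitely many exponents $k$ can even threaten $(\star)$.

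The main obstacle is exactly closing $(\star)$: the analytic bounds above confine the minimizer $k^*$ to a bounded range (of order $n^{1/4}$) rather than collapsing it to $1$, and they likewise leave only finitely many candidate triples $(a,b,n)$ with the hypothesis satisfiable. I expect the honest finish to combine these size estimates with the concavity of $f$ — which controls how far $h$ can dip between $k=0$, $k=1$, and larger $k$ — and then to dispatch the residual small cases directly, mirroring the finitely many exceptional pairs that surface in the main theorem. This estimation-plus-case-check step is where essentially all the difficulty lies; the reductions in the first two paragraphs are routine given the earlier results.
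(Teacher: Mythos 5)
Your setup is sound and in fact mirrors the paper's: you lift the problem to the $\mu$ sequence via the identity $\mu_{a,b}(n)=\min_{k\geq 0}\left(\mu(n+ka)+kb\right)$ (the paper derives the same data from a witness $(m',n')\in T$, concluding $k\geq 1$ exactly as you do), and you correctly identify that everything reduces to forcing the minimizer to be $k=1$. But that forcing step is precisely what you do not prove. You bound the minimizer $k^*$ to ``a bounded range (of order $n^{1/4}$)'' and then defer to an unspecified combination of concavity arguments and a finite case-check. That is a genuine gap: the lemma is not established until $k^*=1$ is actually derived, and no case-check is carried out (nor is it clear what finite list would need checking, since $a$ and $b$ are unbounded here).

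The paper closes this gap with a purely analytic argument that you are missing. Writing $\ell=\mu(n)-\mu_{a,b}(n)\geq 1$, the minimizer $k$ satisfies $\mu(n+ka)+kb+\ell=\mu(n)$, hence by Theorems \ref{fBound} and \ref{GaussBound}, $f(n+ka)+kb+\ell\leq 3f\!\left(\frac{n}{3}\right)$. Dividing by $3$ and applying the increasing function ${x\choose 2}$ to both sides, then using ${f(x)\choose 2}=x$ (Lemma \ref{flemma}) to replace $f(n+ka)^2-f(n+ka)$ by $2(n+ak)$, the inequality collapses to $(2kb+2\ell-2)f(n+ak)+(kb+\ell)(kb+\ell-3)\leq 4n-2ak$. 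Since $kb+\ell\geq 2$, the left side is nonnegative, so $ak\leq 2n<2a$ and $k=1$ -- no residual cases, no concavity argument, and no dependence on the combined bound (which the paper reserves for Lemma \ref{a<=485 lemma}). Incidentally, even a crude version of this estimate pins $k$ to an absolute constant (roughly $k\leq 3$), not to a range growing with $n$, so your quantitative assessment of what the bounds yield is also off; the decisive idea you lack is the substitution $f(n+ak)^2-f(n+ak)=2(n+ak)$, which makes the term $ak$ appear explicitly and lets $n<a$ finish the job.
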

\begin{proof}
Let $S(a,b)\in\mathcal{Q}_0^\infty$ with $a,b\geq 1$, and suppose there exists $n\in\mathbb{N}_0$ such that $n<a$ and $\mu_{a,b}(n)<\mu(n)$.  Due to Lemma \ref{mu=mu(a,b) small cases}, we know that $n\geq 3$.

By the definition of $\mu_{a,b}$, we know that $\mu_{a,b}(n)a+nb\in S(a,b)$.  Then, by Theorem \ref{PhiFunction}, there exists $(m',n')\in T$ such that $\mu_{a,b}(n)a+nb=m'a+n'b$.  We can rearrange this equation to get $(\mu_{a,b}(n)-m')a=(n'-n)b$.  Since $\gcd(a,b)=1$, this implies that there exists $k\in\mathbb{Z}$ such that $\mu_{a,b}(n)-m'=kb$ and $n'-n=ka$.

Suppose that $k=0$.  Then $(\mu_{a,b}(n),n)=(m',n')$, so $(\mu_{a,b}(n),n)\in T$.  Then, by the definition of $\mu$, $\mu(n)\leq \mu_{a,b}(n)$, which contradicts the assumption that $\mu_{a,b}(n)<\mu(n)$.  Now suppose that $k<0$.  Then $n'-n=ka\leq -a$, so $n'\leq n-a$.  However, since $n<a$, this implies $n'<0$, which contradicts the fact that $(m',n')\in T$, which is a subset of $\mathbb{N}_0^2$.  Hence, it must be the case that $k\geq 1$.

Since $(m',n')\in T$, by the definition of $\mu$, $\mu(n')\leq m'$.  Thus, $\mu(n+ka)\leq \mu_{a,b}(n)-kb$.  Using Theorem \ref{mu(a,b) of n+ia}, this becomes $\mu(n+ka)\leq \mu_{a,b}(n+ka)$ which is the same as $\mu(n')\leq\mu_{a,b}(n')$.  On the other hand, Lemma \ref{mu(a,b)<=mu} tells us that $\mu_{a,b}(n')\leq\mu(n')$.  Hence, $\mu_{a,b}(n')=\mu(n')$, which is the same as $\mu_{a,b}(n+ka)=\mu(n+ka)$.

Now we will show that $k=1$, which will yield the statement we are trying to prove.

Theorem \ref{mu(a,b) of n+ia} tells us that $\mu_{a,b}(n)=\mu_{a,b}(n+ka)+kb$, so $\mu_{a,b}(n)=\mu(n+ka)+kb$.  Furthermore, since we assumed that $\mu_{a,b}(n)<\mu(n)$, there exists some $\ell\in\mathbb{N}$ such that $\mu_{a,b}(n)=\mu(n)-\ell$.  Hence, $\mu(n)-\ell= \mu(n+ka)+kb$.  Rearranged, this is $\mu(n+ka)+kb+\ell=\mu(n)$.

Now we will apply the lower bound on $\mu$ from Theorem \ref{fBound} to the left-hand side of the equation, and we will apply the upper bound on $\mu$ from Theorem \ref{GaussBound} to the right-hand side of this equation, to obtain
\begin{align*}
    f\left(n+ka\right)+kb+\ell &\leq 3f\left(\frac{n}{3}\right) \\
    \frac{1}{3}\left(f\left(n+ka\right)+kb+\ell\right) &\leq f\left(\frac{n}{3}\right) \\
\end{align*}

Since $x\geq 0$ implies $f(x)\geq 1$, and since $k,b,\ell\geq 1$, the left-hand side of this inequality is at least $1$, as is the right-hand side.  The $x\choose 2$ function is increasing when $x\geq 1$, so we can apply the $x\choose 2$ function to both sides of this inequality.
\[    {\frac{1}{3}\left(f(n+ka)+kb+\ell\right) \choose 2} \leq {f\left(\frac{n}{3}\right)\choose 2}\]

Furthermore, since we know $n\geq 3$, we have $\frac{n}{3}\geq 1$, so we can apply Lemma \ref{flemma} to say that
\[{f\left(\frac{n}{3}\right)\choose 2}=\frac{n}{3}\]
Hence
\begin{align*}
    \frac{1}{2}\left(\frac{1}{3}\left(f(n+ka)+kb+\ell\right)\right)\left(\frac{1}{3}\left(f(n+ka)+kb+\ell\right)-1\right) &\leq \frac{n}{3} \\
    \frac{1}{18}\left(f(n+ka)+kb+\ell\right)\left(f(n+ka)+kb+\ell-3\right) &\leq \frac{n}{3} \\
    \left(f(n+ka)+kb+\ell\right)\left(f(n+ka)+kb+\ell-3\right) &\leq 6n \\
    f(n+ak)^2+(2kb+2\ell-3)f(n+ak)+(kb+\ell)(kb+\ell-3) &\leq 6n \\
    f(n+ak)^2-f(n+ak)+(2kb+2\ell-2)f(n+ak)+(kb+\ell)(kb+\ell-3) &\leq 6n \\
    2{f(n+ak)\choose 2}+(2kb+2\ell-2)f(n+ak)+(kb+\ell)(kb+\ell-3) &\leq 6n
\end{align*}

Since $a,k\geq 1$, we have $n+ak\geq 1$, so we can apply Lemma \ref{flemma} to say that
\[{f(n+ak)\choose 2}=n+ak\]
Thus the inequality becomes
\[2(n+ak)+(2kb+2\ell-2)f(n+ak)+(kb+\ell)(kb+\ell-3) \leq 6n\]
\begin{equation}\label{ineq1}
    (2kb+2\ell-2)f(n+ak)+(kb+\ell)(kb+\ell-3) \leq 4n-2ak
\end{equation}

Now let us consider the quantities on the left-hand side of the inequality.  Since $k,b,\ell\geq 1$, this implies $2kb+2\ell-2\geq 2$.  We also have $f(x)\geq 1$ whenever $x\geq 0$, so $(2kb+2\ell-2)f(n+ak)\geq 2$.  As for $(kb+\ell)(kb+\ell-3)$, when $kb+\ell\geq 3$, we have $(kb+\ell)(kb+\ell-3)\geq 0$.  Otherwise, if $kb+\ell< 3$, then since $k,b,\ell\geq 1$, we must have $kb+\ell=2$, in which case $(kb+\ell)(kb+\ell-3)=-2$.  Taken together, all of these imply that $0\leq 4n-2ak$.  Rearranged, this is $ak\leq 2n$.

Furthermore, since we assumed that $n<a$, $ak\leq 2n$ implies that $ak<2a$, so $k<2$.  Since we already know that $k\geq 1$, it must be the case that $k=1$.  Hence, $\mu_{a,b}(n+a)=\mu(n+a)$.
\end{proof}

\begin{lemma}\label{a<=485 lemma}
For all $S(a,b)\in\mathcal{Q}_0^\infty$ with $a,b\geq 1$, if there exists $n\in\mathbb{N}_0$ such that $n<a$ and $\mu_{a,b}(n)<\mu(n)$, then $a\leq 485$ and $b+\mu(n)-\mu_{a,b}(n)\leq 4$.
\end{lemma}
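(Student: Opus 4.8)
The plan is to distill the hypothesis into a single numerical inequality and then analyze it. First I would record the exact relation the hypothesis forces. Since $\mu_{a,b}(n)<\mu(n)$ already fails for $n\in\{0,1,2\}$ by Lemma \ref{mu=mu(a,b) small cases}, we have $n\geq 3$. Write $s:=b+\mu(n)-\mu_{a,b}(n)$, and note $s\geq 2$ because $b\geq 1$ and $\mu(n)-\mu_{a,b}(n)\geq 1$. Combining Lemma \ref{mu(a,b)(n+a)=mu(n+a)}, which gives $\mu_{a,b}(n+a)=\mu(n+a)$, with Theorem \ref{mu(a,b) of n+ia} (taken at $i=1$, so $\mu_{a,b}(n+a)=\mu_{a,b}(n)-b$) yields the clean identity
\[\mu(n+a)=\mu(n)-s,\qquad s\geq 2.\]

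Next I would sandwich this identity between the bounds on $\mu$ already established. The lower bound $\mu(n+a)\geq f(n+a)$ from Theorem \ref{fBound} and the combined upper bound $\mu(n)\leq f(n)+3f\!\left(\tfrac{f(n)-2}{3}\right)$ from Theorem \ref{combined bound}, together with $n<a$ (so $n+a>2n$ and hence $f(n+a)>f(2n)$), give
\[s \;<\; f(n)-f(2n)+3f\!\left(\tfrac{f(n)-2}{3}\right)\;=:\;g(n).\]
The Gauss bound alone is too weak at this point, since its error grows like $\sqrt{n}$; using the combined bound is essential, because $f(2n)$ then dominates the slowly growing ($n^{1/4}$-scale) correction term and forces $g(n)\to-\infty$.

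Both conclusions flow from $g$. For $b+\mu(n)-\mu_{a,b}(n)=s\leq 4$ it suffices to prove $g(n)<5$ for all $n\geq 3$: I would treat $g$ as a function of a real variable, verify $g<5$ directly on an explicit finite range and control the tail by monotonicity and asymptotic estimates, so that the maximum of $g$ sits safely below $5$; since $s$ is a positive integer with $s<g(n)<5$, we get $s\leq 4$. For $a\leq 485$ I would use $s\geq 2$ twice. First, $s\geq 2$ forces $g(n)>2$, and because $g$ eventually drops below $2$ this confines $n$ to a finite range $3\leq n\leq N_0$. Second, rearranging $f(n+a)\leq\mu(n+a)=\mu(n)-s\leq\mu(n)-2$ and applying $\binom{\cdot}{2}$ (using $\binom{f(x)}{2}=x$ from Lemma \ref{flemma}) gives $n+a\leq\binom{\mu(n)-2}{2}$, hence $a\leq\binom{\mu(n)-2}{2}-n$. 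Maximizing this over the finitely many admissible $n$, using the exact values of $\mu$ supplied by the recursion of Theorem \ref{recursiveExact}, produces the stated bound $a\leq 485$.

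The main obstacle is the analytic bookkeeping for $g$: proving $\max_{n\geq 3}g(n)<5$ rigorously rather than merely numerically, and pinning down the exact threshold $N_0$ beyond which $g(n)\leq 2$, since $g$ is a sum of nested square roots whose critical point has no convenient closed form. Once $N_0$ is fixed the remaining optimization is finite, but it is delicate because it must use the exact $\mu$ values (not the looser combined bound) to obtain precisely $485$; care is therefore needed to confirm that the loosened bound used to cap $n$ is consistent with the exact values used to cap $a$.
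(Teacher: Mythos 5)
Your first half is exactly the paper's argument: you combine Lemma \ref{mu(a,b)(n+a)=mu(n+a)} with Theorem \ref{mu(a,b) of n+ia} to get $\mu(n+a)+b+\ell=\mu(n)$ (your $s=b+\ell$), then sandwich this between the lower bound of Theorem \ref{fBound} and the combined bound of Theorem \ref{combined bound}, and your observation that the Gauss bound alone is too weak here is correct and matches why the paper needs Theorem \ref{combined bound}. The divergence comes in how the resulting inequality is turned into a bound on $a$. The paper does \emph{not} discard the $a$-dependence: it keeps $f(n)-f(n+a)$ intact, proves this expression is increasing in $n$ for fixed $a$, and evaluates at $n=a-1$ to obtain a univariate function $g(a)=f(a-1)-f(2a-1)+3f\left(\frac{f(a-1)-2}{3}\right)$. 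The single condition $g(a)\geq 2$ then bounds $a$ directly (the equation $g(a)=2$ reduces to a quartic with real roots $a=2$ and $a\approx 485.92$), and $g(a)<5$ on $a\leq 485$ gives $b+\ell\leq 4$ in the same stroke. Your substitution $f(n+a)>f(2n)$ throws away $a$, so your inequality $2\leq s<g(n)$ can only confine $n$, and you are forced to recover the bound on $a$ by a second, structurally different argument.

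That second argument is where the gap is. Your derivation $a\leq\binom{\mu(n)-2}{2}-n$ is valid, but the assertion that maximizing this over the admissible $n$ ``produces the stated bound $a\leq 485$'' is not established --- it is the outcome of a finite computation you have not performed, and it is a \emph{different} computation from the one that produced $485$ in the first place (the paper's $485$ comes from the quartic $g(a)=2$, not from any maximization of $\binom{\mu(n)-2}{2}-n$). One can argue a priori that your quantity is bounded by $\binom{f(n)+3f((f(n)-2)/3)-2}{2}-n$, which is increasing and stays just under $486$ on the admissible range, so your route very likely yields a bound $\leq 485$ (indeed probably a much smaller one, since the exact $\mu$ values sit far below the combined bound); but as written this is a conjecture, not a proof, and without the a priori comparison there is nothing preventing the maximum from landing above $485$. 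You also acknowledge but do not resolve the analytic work of showing $\max_{n\geq 3}g(n)<5$ and locating the threshold $N_0$; the paper handles the analogous facts for its $g(a)$ by reducing $g(a)=2$ to an exactly solvable quartic and reading the bound $g(a)<5$ off the graph. The cleanest repair is simply not to replace $f(n+a)$ by $f(2n)$: keeping the $a$-dependence and using $n\leq a-1$ converts your inequality directly into a constraint on $a$ alone, and both conclusions of the lemma then fall out of the one function $g(a)$.
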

\begin{proof}
Let $S(a,b)\in\mathcal{Q}_0^\infty$ with $a,b\geq 1$, and assume there exists $n\in\mathbb{N}_0$ such that $n<a$ and $\mu_{a,b}(n)<\mu(n)$.  Due to Lemma \ref{mu=mu(a,b) small cases}, we know $n\geq 3$, hence we have $a\geq 4$ as well.  From the last lemma, we know that $\mu(n+a)=\mu_{a,b}(n+a)$, and from Lemma \ref{mu(a,b) of n+ia}, this tells us $\mu(n+a)=\mu_{a,b}(n)-b$.

Let $\ell$ stand for the amount by which $\mu(n)$ exceeds $\mu_{a,b}(n)$, in other words, $\ell:=\mu(n)-\mu_{a,b}(n)$.  Then $\mu(n+a)=\mu_{a,b}(n)-b=\mu(n)-\ell-b$, so we have $\mu(n+a)+b+\ell=\mu(n)$.

Now we will apply the lower bound from Theorem \ref{fBound} to the left side of the equation and the combined bound from Theorem \ref{combined bound} to the right side of the equation to get
\[f(n+a)+b+\ell\leq f(n)+ 3f\left(\frac{f(n)-2}{3}\right)\]
Rearranging, we get
\[b+\ell\leq f(n)-f(n+a)+ 3f\left(\frac{f(n)-2}{3}\right)\]

Now we will show that the right-hand side of the last inequality is increasing on $n$.  Clearly, the last term, $3f\left((f(n)-2)/3\right)$, is increasing on $n$, because $f$ is an increasing function, so we just need to show that $f(n)-f(n+a)$ is increasing on $n$ as well.
\begin{align*}
    f(n)&-f(n+a)=\frac{1}{2}\left(\sqrt{8n+1}-\sqrt{8(n+a)+1}\right)\\
    &=\frac{1}{2}\left(\sqrt{8n+1}-\sqrt{8(n+a)+1}\right)\left(\frac{\sqrt{8n+1}+\sqrt{8(n+a)+1}}{\sqrt{8n+1}+\sqrt{8(n+a)+1}}\right)\\
    &=\frac{1}{2}\left(\frac{8n+1-(8(n+a)+1)}{\sqrt{8n+1}+\sqrt{8(n+a)+1}}\right)\\
    &=\frac{-4a}{\sqrt{8n+1}+\sqrt{8(n+a)+1}}
\end{align*}
Clearly, the denominator of this expression is increasing in $n$, and since the numerator is a negative constant, the overall expression is also increasing in $n$.

Since $f(n)-f(n+a)+ 3f\left(\frac{f(n)-2}{3}\right)$ increases with $n$, for $n<a$, it will achieve its maximal value when $n=a-1$.  Hence
\[b+\ell\leq f(a-1)-f(2a-1)+ 3f\left(\frac{f(a-1)-2}{3}\right)\]

Let us define the right-hand side of the inequality as a function of $a$.  Let $g: \mathbb{N}\to \mathbb{R}$ be given by
\[g(a)=f(a-1)-f(2a-1)+ 3f\left(\frac{f(a-1)-2}{3}\right)\]

This function is not easy to work with algebraically, but it is smooth and continuous, so we can find some of its properties by examining the graph, shown in Figure \ref{gPic}.  For example, there is a local maximum at approximately $g(52.15)\approx 4.59$.

\begin{figure}
  \centering
      \includegraphics[width=1\textwidth]{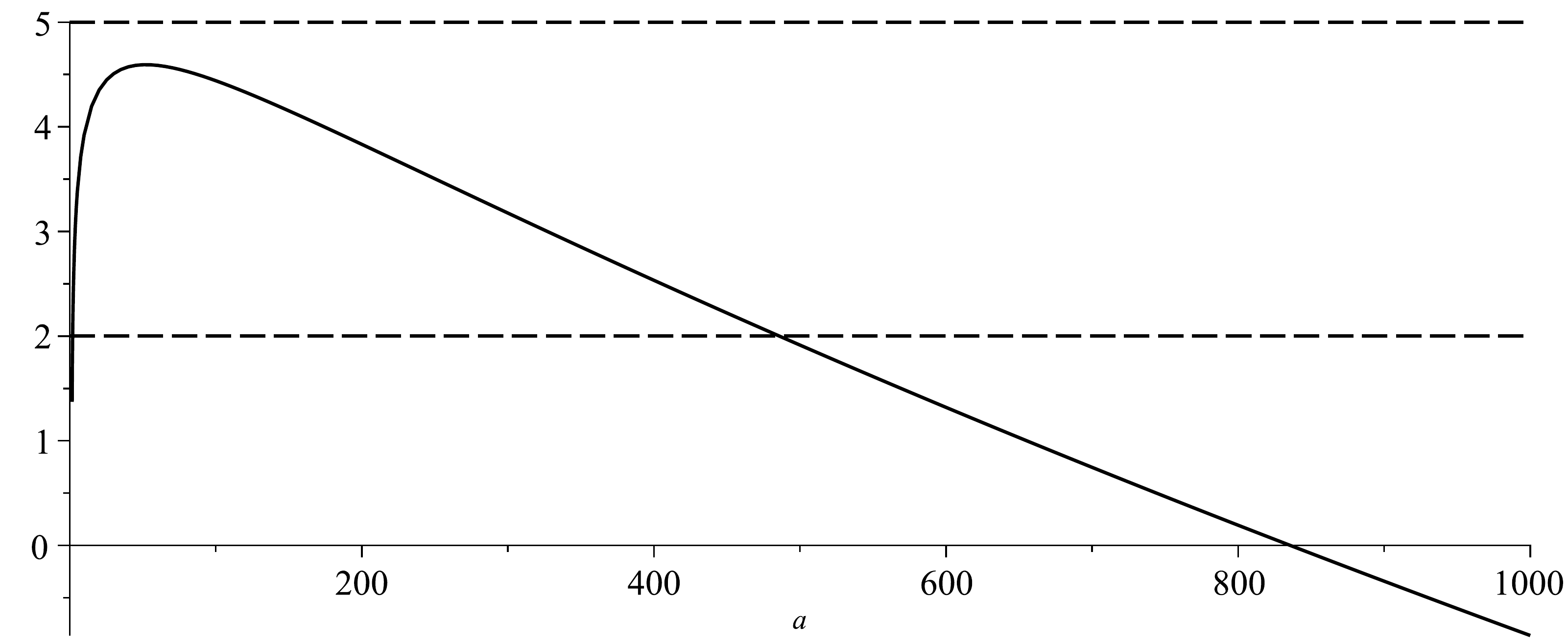}
  \caption{$g(a)$ for $1\leq a\leq 1000$.}
   \label{gPic}
\end{figure}

Since we know $2\leq b+\ell\leq g(a)$, we would like to know what values of $a$ satisfy $2\leq g(a)$.  To find this out, we need to solve $g(a)=2$.  We have decided to omit the solution process here, since it would fill several pages, but, using a computer algebra system, the equation $g(a)=2$ can be converted into a fourth-degree polynomial in $a$, which has four solutions, two real and two complex.  The real solutions are $a=2$ and $a\approx 485.92$.  Since we can see from the graph that $g(a)<2$ for at least one value of $a\geq 486$, we conclude that when $a\geq 486$, $g(a)<2$.  Hence, we must have $a\leq 485$ in order to avoid a contradiction.  This proves the first part of the lemma statement.

In order to prove the second part of the lemma statement, we can simply observe from the graph that for $a\leq 485$, we have $g(a) < 5$.  Since $b+\ell\leq g(a)$, this implies $b+\ell\leq 4$.
\end{proof}

We now present the main result of this section.

\begin{theorem}\label{mu=mu_a,b}
For all $S(a,b)\in\mathcal{Q}_0^\infty$, if $(a,b,n)=(29,1,26)$, $(45,1,33)$, $(47,1,44)$, $(50,1,41)$, $(55,1,50)$, $(67,1,53)$, $(73,1,63)$, or $(79,1,74)$, then
\[\mu_{a,b}(n)=\mu(n)-1.\]
For all other values of $a,b\geq 1$ and $n\in\mathbb{N}_0$ where $n<a$,
\[\mu_{a,b}(n)=\mu(n).\]
\end{theorem}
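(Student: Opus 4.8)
The plan is to use the three preceding lemmas to reduce the statement to a finite, mechanically verifiable computation. Lemma~\ref{mu(a,b)<=mu} guarantees $\mu_{a,b}(n)\leq\mu(n)$ for every $n$, so any failure of the equality $\mu_{a,b}(n)=\mu(n)$ must be a strict inequality $\mu_{a,b}(n)<\mu(n)$. Writing $\ell:=\mu(n)-\mu_{a,b}(n)$, a failure means $\ell\geq 1$. Lemma~\ref{a<=485 lemma} then tells us that whenever a failure occurs with $n<a$, the parameters obey $a\leq 485$ and $b+\ell\leq 4$, and its proof (via Lemma~\ref{mu=mu(a,b) small cases}) shows moreover that $n\geq 3$.

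First I would extract the finite search space. Since $b\geq 1$ and $\ell\geq 1$, the bound $b+\ell\leq 4$ forces $b\in\{1,2,3\}$ and $\ell\in\{1,2,3\}$. Together with $a\leq 485$, the range $3\leq n<a$, and the standing requirement $\gcd(a,b)=1$, the set of triples $(a,b,n)$ that could conceivably satisfy $\mu_{a,b}(n)<\mu(n)$ is finite, numbering fewer than $485\cdot 3\cdot 485$ candidates before the coprimality and $b+\ell$ restrictions are imposed. Every triple with $n<a$ falling outside this window therefore satisfies $\mu_{a,b}(n)=\mu(n)$ automatically, which already settles the ``all other values'' clause for those parameters.

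Next I would perform the computation on the finite window. For each candidate $(a,b,n)$, I would compute $\mu(n)$ from the recursion in Theorem~\ref{recursiveExact}, and compute $\mu_{a,b}(n)$ as the least $m$ with $ma+nb\in S(a,b)$; equivalently, one computes the Ap\'ery set $\Ap(S(a,b),a)$ described in Theorem~\ref{AperyS(a,b)} and reads off the entry lying in the residue class $nb \bmod a$. Comparing the two values over all candidates, one checks that the strict inequality $\mu_{a,b}(n)<\mu(n)$ holds for precisely the eight listed triples $(29,1,26),\ldots,(79,1,74)$, and that for each of them $\ell=1$, i.e.\ $\mu_{a,b}(n)=\mu(n)-1$. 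Since every other candidate satisfies $\mu_{a,b}(n)=\mu(n)$ and every triple outside the window does as well, both clauses of the theorem follow.

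The main obstacle is the reliability and exhaustiveness of this final computation rather than any further conceptual difficulty. The delicate points are: (i) supplying a provably correct procedure for deciding membership in $S(a,b)$, whose generating set $\{ka+{k\choose 2}b : k\in\mathbb{N}\}$ is infinite, so one must first argue that only finitely many generators are relevant to computing $\mu_{a,b}(n)$ for $n<a$ (this follows because $\mu_{a,b}(n)a+nb$ is bounded using $\mu_{a,b}(n)\leq\mu(n)$ and the Gauss bound of Theorem~\ref{GaussBound}, capping the index $k$ of any generator that can appear); (ii) confirming that the search truly exhausts the window rather than merely sampling it; and (iii) the fact that, although the bounds permit $b\in\{2,3\}$ and $\ell\in\{2,3\}$, no discrepancy with those values actually materializes. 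This last point seems to be an arithmetical accident of the window with no evident shortcut, so it must be established by direct verification.
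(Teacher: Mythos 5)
Your proposal is correct in outline and reaches the same finite-search endgame as the paper, but the search you run is genuinely different from the one the paper runs, and the difference matters computationally. The paper does not search over triples $(a,b,n)$ and never computes $\mu_{a,b}$ directly during the search. Instead it combines Lemma~\ref{mu(a,b)(n+a)=mu(n+a)} (which gives $\mu_{a,b}(n+a)=\mu(n+a)$ whenever a discrepancy occurs) with Theorem~\ref{mu(a,b) of n+ia} (which gives $\mu_{a,b}(n+a)=\mu_{a,b}(n)-b$) to obtain the identity $\mu(n)-\mu(n+a)=b+\ell$; together with $b+\ell\leq 4$ from Lemma~\ref{a<=485 lemma}, any discrepancy forces $2\leq\mu(n)-\mu(n+a)\leq 4$. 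This condition involves only $\mu$, so the entire search collapses to pairs $(a,n)$ with $0\leq n<a\leq 485$, computed via the cheap recursion of Theorem~\ref{recursiveExact}, and the parameter $b$ disappears from the search (it is recovered afterwards: the eight surviving pairs all have $\mu(n)-\mu(n+a)=2$, forcing $b=\ell=1$). The price is that this condition is only necessary, so the paper must separately exhibit, for each of the eight pairs, an explicit element $ma+nb=y_j\in S(a,b)$ with $m<\mu(n)$ to confirm the discrepancy actually occurs. Your route instead computes $\mu_{a,b}(n)$ directly over all $(a,b,n)$ in the window, which settles both directions in one pass but obliges you to solve the membership problem for $S(a,b)$; your point (i) about truncating the infinite generating set via $\mu_{a,b}(n)\leq\mu(n)$ and the Gauss bound is the right way to make that rigorous, though note that citing Theorem~\ref{AperyS(a,b)} as the means of computing the Ap\'ery set is circular as written (that theorem expresses the Ap\'ery set in terms of $\mu_{a,b}$), so you should appeal to a standard residue-class minimization over the truncated generating set instead. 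Both routes are valid; the paper's buys a much lighter and $b$-free computation at the cost of a short witness table, while yours buys uniformity at the cost of a heavier and more delicate verification.
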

\begin{proof}
Let $S(a,b)\in\mathcal{Q}_0^\infty$ with $a,b\geq 1$, and suppose $n\in\mathbb{N}_0$ such that $n<a$ and $\mu_{a,b}(n)<\mu(n)$.  Due to Lemma \ref{mu=mu(a,b) small cases}, we know $n\geq 3$, hence we have $a\geq 4$ as well.

Let $\ell$ stand for the amount by which $\mu(n)$ exceeds $\mu_{a,b}(n)$, in other words, $\ell:=\mu(n)-\mu_{a,b}(n)$.  We showed in the last proof that $\mu(n+a)+b+\ell=\mu(n)$, and that $b+\ell\leq 4$.  Since $b,\ell\geq 1$, we have $2\leq \mu(n)-\mu(n+a)\leq 4$.

Since we have an efficient algorithm for calculating values of $\mu$ (Theorem \ref{recursiveExact}), and we know that $a\leq 485$, we can simply calculate all values of $\mu(n)$ and $\mu(n+a)$ for $0\leq n<a\leq 485$, and check which pairs of $(a,n)$ satisfy $2\leq \mu(n)-\mu(n+a)\leq 4$ and which do not.

This search was performed and found that the only pairs satisfying $2\leq \mu(n)-\mu(n+a)\leq 4$ were $(a,n)=(29,26)$, $(45,33)$, $(47,44)$, $(50,41)$, $(55,50)$, $(67,53)$, $(73,63)$, and $(79,74)$.  For each of these pairs, $\mu(n)-\mu(n+a)=2$, which, since $b,\ell\geq 1$, implies that $b=1$ and $\ell=1$.  Hence $\mu_{a,b}(n)=\mu(n)-\ell=\mu(n)-1$.

So far, we have shown that if $\mu_{a,b}(n)<\mu(n)$, then $b=1$ and $(a,n)=(29,26)$, $(45,33)$, $(47,44)$, $(50,41)$, $(55,50)$, $(67,53)$, $(73,63)$, or $(79,74)$.  We will now show the converse, that when $b=1$ and $(a,n)=(29,26)$, $(45,33)$, $(47,44)$, $(50,41)$, $(55,50)$, $(67,53)$, $(73,63)$, or $(79,74)$, we do in fact have $\mu_{a,b}(n)<\mu(n)$.

\begin{center}
\begin{tabular}{ |c|c|c|c| } 
 \hline
 $a$ & $n$ & $\mu(n)$ & $ma+nb$\\ 
 \hline\hline
29 & 26 & 13 & $12(29)+26(1)=374=y_{11}$ \\ \hline
45 & 33 & 15 & $14(45)+33(1)=663=y_{13}$ \\ \hline
47 & 44 & 16 & $15(47)+44(1)=749=y_{14}$ \\ \hline
50 & 41 & 16 & $15(50)+41(1)=791=y_{14}$ \\ \hline
55 & 50 & 17 & $16(55)+50(1)=930=y_{15}$ \\ \hline
67 & 53 & 18 & $17(67)+53(1)=1192=y_{16}$ \\ \hline
73 & 63 & 19 & $18(73)+63(1)=1377=y_{17}$ \\ \hline
79 & 74 & 20 & $19(79)+74(1)=1575=y_{18}$ \\ \hline
\end{tabular}
\end{center}

As is shown in the table above, for the eight exceptional cases of $(a,n)$, there exists $m<\mu(n)$ such that $ma+nb\in S(a,b)$.  Hence, in these cases, $\mu_{a,b}(n)<\mu(n)$, and, as we have already shown, this implies that $\mu_{a,b}(n)=\mu(n)-1$.

\end{proof}

As a consequence of this, we can define the Ap\'ery sets $\Ap(S(a,b),a)$ purely in terms of $\mu$, $a$, and $b$, rather than in terms of $\mu_{a,b}$ as we had previously done.

\begin{corollary}\label{AperyCorollary}
For all $S(a,b)\in\mathcal{Q}_0^\infty$ where $a,b\geq 1$, if $(a,b)\neq (29,1)$, $(45,1)$, $(47,1)$, $(50,1)$, $(55,1)$, $(67,1)$, $(73,1)$, or $(79,1)$, then
\[\Ap(S(a,b),a)=\Set{\mu(n)a+nb | n=0,1,2,\dots,a-1}\]
\end{corollary}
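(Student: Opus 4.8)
The plan is to obtain this corollary as an immediate consequence of Theorem~\ref{AperyS(a,b)} together with the main result Theorem~\ref{mu=mu_a,b}. Theorem~\ref{AperyS(a,b)} already supplies, for every $S(a,b)\in\mathcal{Q}_0^\infty$ with $a\geq 1$,
\[\Ap(S(a,b),a)=\Set{\mu_{a,b}(n)a+nb | n=0,1,2,\dots,a-1},\]
so the only remaining task is to justify replacing each $\mu_{a,b}(n)$ by $\mu(n)$ in this expression. That replacement is legitimate precisely when $\mu_{a,b}(n)=\mu(n)$ holds for every $n$ in the range $0\leq n<a$.

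First I would observe that the eight exceptional triples $(a,b,n)$ appearing in Theorem~\ref{mu=mu_a,b} all have $b=1$ and pairwise distinct first coordinate $a\in\Set{29,45,47,50,55,67,73,79}$. Hence the set of pairs $(a,b)$ that occur as the first two coordinates of some exceptional triple is exactly $\Set{(29,1),(45,1),(47,1),(50,1),(55,1),(67,1),(73,1),(79,1)}$, which is precisely the list of pairs excluded in the hypothesis of the corollary. Consequently, under the corollary's hypothesis that $(a,b)$ is none of these eight pairs, no triple $(a,b,n)$ is exceptional, and so the second clause of Theorem~\ref{mu=mu_a,b} yields $\mu_{a,b}(n)=\mu(n)$ for every $n\in\mathbb{N}_0$ with $n<a$.

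Substituting this equality termwise into the Ap\'ery set description from Theorem~\ref{AperyS(a,b)} then gives
\[\Ap(S(a,b),a)=\Set{\mu(n)a+nb | n=0,1,2,\dots,a-1},\]
as desired. Because the argument is a direct substitution once the correspondence between the two exception lists is established, there is no genuine obstacle here; the only point requiring care is to verify that excluding the eight pairs $(a,b)$ removes \emph{every} exceptional $n$, rather than merely the single exceptional $n$ attached to each such pair. This follows immediately from the fact that each of the excluded pairs has a unique associated exceptional $n$ in Theorem~\ref{mu=mu_a,b}, so eliminating the pair eliminates its only failure of the identity $\mu_{a,b}(n)=\mu(n)$, leaving the identity valid across the full index range $0\leq n<a$.
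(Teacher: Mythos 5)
Your argument is correct and is exactly the route the paper intends: the paper states this corollary without proof as an immediate consequence of Theorem~\ref{mu=mu_a,b} combined with Theorem~\ref{AperyS(a,b)}, which is precisely the substitution you carry out. Your added observation that the eight exceptional triples have pairwise distinct $(a,b)$ components, so excluding those pairs removes every exceptional $n$, is the one point worth making explicit, and you make it correctly.
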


\section{Bounds on the Frobenius Number and the Genus}

As a consequence of Corollary \ref{AperyCorollary} and Lemma \ref{FgLemma}, which gives formulas for the Frobenius number and the genus in terms of an Ap\'ery set, we have the following.

\begin{theorem}\label{FgMuFormula}
For all $S(a,b)\in\mathcal{Q}_0^\infty$ where $a,b\geq 1$ and $(a,b)\neq (29,1)$, $(45,1)$, $(47,1)$, $(50,1)$, $(55,1)$, $(67,1)$, $(73,1)$, or $(79,1)$, the Frobenius number of $S(a,b)$ is given by
\[\F(S(a,b))=\max\Set{\mu(n)a+nb | n=0,1,2,\dots,a-1}-a\]
and the genus of $S(a,b)$ is given by
\[\g(S(a,b))=\frac{(a-1)(b-1)}{2}+\sum_{n=0}^{a-1} \mu(n)\]
\end{theorem}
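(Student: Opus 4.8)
The plan is to combine Corollary \ref{AperyCorollary}, which expresses the Ap\'ery set $\Ap(S(a,b),a)$ explicitly in terms of $\mu$, with Lemma \ref{FgLemma}, which recovers the Frobenius number and genus from any Ap\'ery set. Since the hypotheses on $(a,b)$ in the present statement are precisely those under which Corollary \ref{AperyCorollary} holds, both formulas should drop out by direct substitution. The one preliminary point to verify is that $a$ is a legitimate choice for the distinguished element in Lemma \ref{FgLemma}: because $y_1=1\cdot a+{1\choose 2}b=a$ and $a\geq 1$, the value $a$ is indeed a nonzero element of $S(a,b)$, so Lemma \ref{FgLemma} applies with $n=a$.

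For the Frobenius number, I would apply Lemma \ref{FgLemma} with $n=a$ to obtain $\F(S(a,b))=\max\Ap(S(a,b),a)-a$, and then substitute the set $\Set{\mu(n)a+nb | n=0,1,\dots,a-1}$ supplied by Corollary \ref{AperyCorollary}. This gives the claimed formula immediately, with no further manipulation required.

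For the genus, I would again invoke Lemma \ref{FgLemma} with $n=a$, yielding
\[\g(S(a,b))=\frac{1}{a}\left(\sum_{w\in\Ap(S(a,b),a)}w\right)-\frac{a-1}{2}.\]
Substituting the explicit Ap\'ery set turns the inner sum into $\sum_{n=0}^{a-1}\left(\mu(n)a+nb\right)$, which splits as $a\sum_{n=0}^{a-1}\mu(n)+b\sum_{n=0}^{a-1}n$. Using $\sum_{n=0}^{a-1}n=\frac{a(a-1)}{2}$, dividing through by $a$, and combining the resulting term $\frac{b(a-1)}{2}$ with $-\frac{a-1}{2}$ produces $\frac{(a-1)(b-1)}{2}+\sum_{n=0}^{a-1}\mu(n)$, exactly as stated.

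There is no genuine obstacle in this proof: the whole argument is a substitution of Corollary \ref{AperyCorollary} into Lemma \ref{FgLemma} followed by an elementary evaluation of an arithmetic sum. The only matters that require care are confirming that $a\in S(a,b)\setminus\Set{0}$, so that Lemma \ref{FgLemma} is applicable, and checking that the list of excluded pairs $(a,b)$ in the theorem statement coincides with the list excluded in Corollary \ref{AperyCorollary}, so that the Ap\'ery set formula is valid for every case under consideration.
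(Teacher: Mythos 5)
Your proposal is correct and follows essentially the same route as the paper's own proof: substitute the explicit Ap\'ery set from Corollary \ref{AperyCorollary} into Lemma \ref{FgLemma}, then split the sum and use $\sum_{n=0}^{a-1}n=\frac{a(a-1)}{2}$ to simplify the genus formula. Your extra check that $a=y_1$ is a nonzero element of $S(a,b)$ is a sensible point of care that the paper leaves implicit.
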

\begin{proof}
According to Lemma \ref{FgLemma}, $\F(S(a,b))=\max{\Ap(S(a,b),a)}-a$. Also, Corollary \ref{AperyCorollary} says $\Ap(S(a,b),a)=\Set{\mu(n)a+nb | n=0,1,2,\dots,a-1}$ when $(a,b)\neq (29,1)$, $(45,1)$, $(47,1)$, $(50,1)$, $(55,1)$, $(67,1)$, $(73,1)$, or $(79,1)$. Therefore, $\F(S(a,b))=(\max\Set{\mu(n)a+nb | n=0,1,2,\dots,a-1})-a$.

Also according to Lemma 2.6,
\begin{align*}
\g(S(a,b)) &= \frac{1}{a} \left(\sum_{w\in \Ap(S(a,b),a)} w \right)- \frac{a-1}{2} \\
&= \frac{1}{a} \left(\sum_{n=0}^{a-1} \mu(n)a+nb \right)- \frac{a-1}{2} \\
&= \frac{1}{a} \left(\sum_{n=0}^{a-1} \mu(n)a \right) + \frac{1}{a} \left(\sum_{n=0}^{a-1} nb \right)- \frac{a-1}{2} \\
&= \left(\sum_{n=0}^{a-1} \mu(n) \right) + \frac{b}{a} \left(\sum_{n=0}^{a-1} n \right)- \frac{a-1}{2} \\
&= \left(\sum_{n=0}^{a-1} \mu(n) \right) + \frac{b}{a} \left(\frac{a(a-1)}{2} \right)- \frac{a-1}{2} \\
&= \left(\sum_{n=0}^{a-1} \mu(n) \right) + b \left(\frac{a-1}{2} \right)- \frac{a-1}{2} \\
&= \left(\sum_{n=0}^{a-1} \mu(n) \right) + \frac{(a-1)(b-1)}{2}
\end{align*}
\end{proof}

Using the bounds on $\mu$ developed in the last section, together with the formulas for the Frobenius number $\F(S(a,b))$ and the genus $\g(S(a,b))$ which were given in Theorem \ref{FgMuFormula}, we can now give bounds and asymptotic behavior for $\F(S(a,b))$ and $\g(S(a,b))$ in terms of $a$ and $b$.

\begin{theorem}
For all $S(a,b)\in\mathcal{Q}_0^\infty$ where $a,b\geq 1$ and $(a,b)\neq (29,1)$, $(45,1)$, $(47,1)$, $(50,1)$, $(55,1)$, $(67,1)$, $(73,1)$, or $(79,1)$,
\[\F(S(a,b))\geq\frac{a}{2}\left(1+\sqrt{8a-7}\right)+ab-a-b\]
and
\[\F(S(a,b))\leq\frac{a}{2}\left(3+\sqrt{24a-15}\right)+ab-a-b\]
\end{theorem}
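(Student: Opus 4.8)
The plan is to read both inequalities directly off the Frobenius formula established in Theorem \ref{FgMuFormula}, namely
\[\F(S(a,b))=\max\Set{\mu(n)a+nb | n=0,1,2,\dots,a-1}-a,\]
which holds precisely under the hypotheses assumed here, since the eight excluded pairs $(a,b)$ are exactly those for which the formula can fail. The entire argument then reduces to bounding this single maximum above and below, using the bounds on $\mu$ proved earlier: the lower bound $\mu(n)\geq f(n)$ from Theorem \ref{fBound} and the Gauss bound $\mu(n)\leq 3f(n/3)$ from Theorem \ref{GaussBound}, where $f(x)=\frac{1+\sqrt{8x+1}}{2}$.

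For the lower bound I would simply observe that the maximum over $n\in\{0,\dots,a-1\}$ is at least its value at the single index $n=a-1$, so that
\[\F(S(a,b))\geq \mu(a-1)a+(a-1)b-a.\]
Since $a-1\geq 1$, Theorem \ref{fBound} gives $\mu(a-1)\geq f(a-1)=\frac{1+\sqrt{8(a-1)+1}}{2}=\frac{1+\sqrt{8a-7}}{2}$. Substituting this and regrouping the terms $(a-1)b-a$ as $ab-b-a$ produces the claimed bound $\frac{a}{2}(1+\sqrt{8a-7})+ab-a-b$ after routine algebra.

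For the upper bound I would apply the Gauss bound termwise, obtaining $\mu(n)a+nb\leq 3f(n/3)a+nb$ for every $n$. The key observation is that the right-hand side is increasing in $n$: the term $nb$ is increasing because $b\geq 1$, and $3f(n/3)a$ is increasing because $f$ is increasing and $a\geq 1$. Hence the maximum of $3f(n/3)a+nb$ over $0\leq n\leq a-1$ is attained at $n=a-1$, which gives
\[\F(S(a,b))\leq 3f\!\left(\tfrac{a-1}{3}\right)a+(a-1)b-a.\]
It then remains to simplify $3f((a-1)/3)$: since $\tfrac{8(a-1)}{3}+1=\tfrac{8a-5}{3}$, pulling the factor of $3$ inside the radical turns $\sqrt{\tfrac{8a-5}{3}}$ into $\tfrac{1}{3}\sqrt{24a-15}$, so that $3f((a-1)/3)=\tfrac{1}{2}(3+\sqrt{24a-15})$, and the stated upper bound follows.

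I do not expect a serious obstacle, since both bounds are obtained by substituting the single value $n=a-1$ and invoking results already proved. The one step that genuinely requires care is the monotonicity claim in the upper bound: I must verify that $3f(n/3)a+nb$ is increasing in $n$, which is what lets me replace the true (unknown) maximizer of $\mu(n)a+nb$ by $n=a-1$ without knowing where that maximizer actually lies. Both summands are increasing, so the claim is immediate, but it is the conceptual crux of the argument. Everything else is the algebraic consolidation of the radicals, which is bookkeeping rather than a real difficulty.
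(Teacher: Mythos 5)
Your proposal is correct and follows essentially the same route as the paper: both bounds come from Theorem \ref{FgMuFormula}, the lower bound by evaluating at $n=a-1$ and applying $\mu(a-1)\geq f(a-1)$, and the upper bound by applying the Gauss bound and again landing at $n=a-1$. The only cosmetic difference is that the paper bounds $\max\{a\mu(n)+nb\}$ by $\max\{a\mu(n)\}+\max\{nb\}$ before invoking monotonicity of $f$, whereas you keep the sum together and use monotonicity of $3af(n/3)+nb$ directly; both yield the identical final expression.
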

\begin{proof}
Recall from Theorem \ref{FgMuFormula} that
\begin{align*}
\F(S(a,b))&=\max\Set{a\mu(n)+nb | n=0,1,2,\dots,a-1}-a \\
&\geq a \mu(a-1)+(a-1)b-a \\
&=a \mu(a-1)+ab-a-b
\end{align*}
Applying the lower bound on $\mu$ from Theorem \ref{fBound}, we get
\begin{align*}
\F(S(a,b))&\geq a f(a-1)+ab-a-b \\
&= \frac{a}{2}\left(1+\sqrt{8a-7}\right)+ab-a-b
\end{align*}

As for the upper bound, we can first break up the maximum as follows.
\begin{align*}
\F(S(a,b))&=\max\Set{a\mu(n)+nb | n=0,1,2,\dots,a-1}-a \\
&\leq \max\Set{a\mu(n) | n=0,\dots,a-1}+\max\Set{nb | n=0,\dots,a-1}-a \\
&=\max\Set{a \mu(n) | n=0,1,2,\dots,a-1}+(a-1)b-a \\
&=\max\Set{a \mu(n) | n=0,1,2,\dots,a-1}+ab-a-b
\end{align*}
We can then use the Gauss upper bound from Theorem \ref{GaussBound} to get
\[\F(S(a,b))\leq \max\Set{a\cdot 3f\left(\frac{n}{3}\right) | n=0,1,2,\dots,a-1}+ab-a-b\]
Since $f$ is an increasing function, this yields
\begin{align*}
\F(S(a,b))&\leq 3a f\left(\frac{a-1}{3}\right)+ab-a-b \\
&=\frac{a}{2}\left(3+\sqrt{24a-15}\right)+ab-a-b
\end{align*}
\end{proof}

As a consequence of these bounds, we can use asymptotic Bachmann-Landau notation to say the following.  Readers unfamiliar with this notation should refer to \cite{knuth76}.

\begin{corollary}
For all $S(a,b)\in\mathcal{Q}_0^\infty$, $\F(S(a,b))=\Theta(a^{3/2}+ab)$.
\end{corollary}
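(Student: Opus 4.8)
The plan is to sandwich $\F(S(a,b))$ between the two explicit bounds established in the previous theorem and to verify that each of those bounds is itself $\Theta(a^{3/2}+ab)$; since $\Theta$ here is an asymptotic statement as $a\to\infty$ (with $b\geq 1$ arbitrary), the eight exceptional pairs excluded from that theorem, together with the degenerate cases $S(a,b)=\mathbb{N}_0$ arising when $a\in\{0,1\}$ or $b=0$, all have bounded $a$ and are therefore irrelevant to a claim about growth rate. So it suffices to produce positive constants $c_1,c_2$ and a threshold $a_0$ with $c_1(a^{3/2}+ab)\leq \F(S(a,b))\leq c_2(a^{3/2}+ab)$ for all $a\geq a_0$ and $b\geq 1$.

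For the upper bound I would start from
\[\F(S(a,b))\leq \frac{a}{2}\left(3+\sqrt{24a-15}\right)+ab-a-b,\]
discard the nonpositive terms $-a-b$, and use $\sqrt{24a-15}\leq 2\sqrt{6}\,\sqrt{a}$ to get $\F(S(a,b))\leq \tfrac{3}{2}a+\sqrt{6}\,a^{3/2}+ab$. Since $a\leq a^{3/2}$ for $a\geq 1$, this collapses to $\F(S(a,b))\leq(\tfrac{3}{2}+\sqrt{6})a^{3/2}+ab$, which is $O(a^{3/2}+ab)$.

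For the lower bound I would start from
\[\F(S(a,b))\geq \frac{a}{2}\left(1+\sqrt{8a-7}\right)+ab-a-b\]
and use $\sqrt{8a-7}\geq\sqrt{a}$ for $a\geq 1$, so the leading term is at least $\tfrac12 a^{3/2}$. The only place $b$ enters negatively is the term $-b$, which for $a\geq 2$ is dominated by half of the product term (since $b\leq \tfrac12 ab$), leaving at least $\tfrac12 ab$. The remaining $-a$ is of lower order than $a^{3/2}$, so beyond a fixed threshold (around $a\geq 4$) the $\tfrac12 a^{3/2}$ term absorbs it, yielding $\F(S(a,b))\geq c_1(a^{3/2}+ab)$ uniformly in $b$. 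This gives $\Omega(a^{3/2}+ab)$, and together with the upper bound the desired $\Theta(a^{3/2}+ab)$.

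The computations are routine; the one point I would treat as the main obstacle is that this is a \emph{two-variable} asymptotic, so the threshold on $a$ must be chosen uniformly in $b$, and the stray $-b$ term must be shown to be swallowed by the $+ab$ term rather than by the $a^{3/2}$ term. Confirming that the finitely many excluded pairs (the eight exceptions and the three degenerate families, all with bounded $a$) cannot influence a statement about behavior as $a\to\infty$ then completes the argument.
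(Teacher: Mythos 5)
Your proposal is correct and takes the same route the paper intends: the paper states this corollary without proof as an immediate consequence of the explicit upper and lower bounds in the preceding theorem, and your argument simply carries out the (routine) verification that each of those bounds is $\Theta(a^{3/2}+ab)$, plus the observation that the finitely many excluded cases are irrelevant to the asymptotics.
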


Turning our attention now to the genus, we find the following.

\begin{theorem}
For all $S(a,b)\in\mathcal{Q}_0^\infty$ where $a,b\geq 1$ and $(a,b)\neq (29,1)$, $(45,1)$, $(47,1)$, $(50,1)$, $(55,1)$, $(67,1)$, $(73,1)$, or $(79,1)$,
\[\g(S(a,b))\geq\frac{1}{24}\left((8a-7)^{3/2}+12a-13\right)+\frac{(a-1)(b-1)}{2}\]
and
\[\g(S(a,b))\leq\frac{1}{24}\left(\sqrt{3}(8a+3)^{3/2}+36a-36-11\sqrt{33}\right)+\frac{(a-1)(b-1)}{2}\]
\end{theorem}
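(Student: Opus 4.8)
The plan is to start from the closed formula for the genus established in Theorem \ref{FgMuFormula},
\[\g(S(a,b))=\frac{(a-1)(b-1)}{2}+\sum_{n=0}^{a-1}\mu(n),\]
and to bound the sum $\sum_{n=0}^{a-1}\mu(n)$ from each side using the bounds on $\mu$ already developed. The key preliminary observation is that $\mu(0)=0$, so the sum may be rewritten as $\sum_{n=1}^{a-1}\mu(n)$. This harmless-looking shift is exactly what lets each integral comparison begin at the correct endpoint and thereby produces the precise constants appearing in the statement; without it one obtains strictly weaker bounds.

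For the lower bound, I would apply $\mu(n)\geq f(n)$ from Theorem \ref{fBound} termwise to get $\sum_{n=1}^{a-1}\mu(n)\geq\sum_{n=1}^{a-1}f(n)$. Since $f$ is increasing, each term satisfies $f(n)\geq\int_{n-1}^{n}f(x)\,dx$, so $\sum_{n=1}^{a-1}f(n)\geq\int_{0}^{a-1}f(x)\,dx$. Using the antiderivative $\int f(x)\,dx=\frac{x}{2}+\frac{1}{24}(8x+1)^{3/2}$ and evaluating from $0$ to $a-1$ gives $\frac{1}{24}\left((8a-7)^{3/2}+12a-13\right)$; adding the $\frac{(a-1)(b-1)}{2}$ term yields the claimed lower bound exactly.

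For the upper bound, I would instead apply the Gauss bound $\mu(n)\leq 3f\left(\frac{n}{3}\right)$ from Theorem \ref{GaussBound} termwise, obtaining $\sum_{n=1}^{a-1}\mu(n)\leq\sum_{n=1}^{a-1}3f\left(\frac{n}{3}\right)$. Because $3f(x/3)$ is increasing, each term satisfies $3f(n/3)\leq\int_{n}^{n+1}3f(x/3)\,dx$, so $\sum_{n=1}^{a-1}3f(n/3)\leq\int_{1}^{a}3f(x/3)\,dx$. Rewriting $3f(x/3)=\frac{3}{2}+\frac{\sqrt{3}}{2}\sqrt{8x+3}$ gives the antiderivative $\frac{3x}{2}+\frac{\sqrt{3}}{24}(8x+3)^{3/2}$; evaluating from $1$ to $a$ produces $\frac{\sqrt{3}}{24}(8a+3)^{3/2}+\frac{3a}{2}-\frac{3}{2}-\frac{11\sqrt{33}}{24}$, and combining with $\frac{(a-1)(b-1)}{2}$ recovers the claimed upper bound, with the $11\sqrt{33}$ term arising precisely from the value $(8\cdot 1+3)^{3/2}=11^{3/2}$ at the lower limit $x=1$.

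The conceptual content is minimal: the whole argument is two termwise bound substitutions followed by a standard monotone sum-to-integral comparison, exactly parallel to the Frobenius-number bound proved just above, except that here we integrate rather than simply evaluate at $n=a-1$ because the genus is a sum. I expect the only real obstacle to be bookkeeping---choosing the integration limits so that the direction of each monotonicity estimate is respected, remembering to discard the $n=0$ term, and carrying the constants $(8a-7)^{3/2}$, $(8a+3)^{3/2}$, and $11\sqrt{33}$ through the antiderivative evaluations without error---rather than anything genuinely difficult.
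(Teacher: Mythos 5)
Your proposal is correct and follows essentially the same route as the paper: drop the $\mu(0)$ term, apply Theorem \ref{fBound} termwise for the lower bound and the Gauss bound (Theorem \ref{GaussBound}) termwise for the upper bound, and then compare each monotone sum with the definite integrals $\int_{0}^{a-1} f(x)\,dx$ and $\int_{1}^{a} 3f\left(\frac{x}{3}\right)dx$ respectively. Your write-up is in fact slightly more explicit than the paper's, since you justify the direction of each sum-to-integral comparison and carry out the antiderivative evaluations that the paper leaves implicit.
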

\begin{proof}
Recall from Theorem \ref{FgMuFormula} that
\[\g(S(a,b))=\frac{(a-1)(b-1)}{2}+\sum_{n=0}^{a-1} \mu(n)\]
Furthermore, since $\mu(0)=0$, we can drop the $\mu(0)$ term to get
\[\g(S(a,b))=\frac{(a-1)(b-1)}{2}+\sum_{n=1}^{a-1} \mu(n)\]
Hence, applying the lower bound on $\mu(n)$ from Theorem \ref{fBound}, we get
\[\g(S(a,b))\geq\frac{(a-1)(b-1)}{2}+\sum_{n=1}^{a-1} f(n)\]

As $f$ is an increasing function, we can bound this sum by the following definite integral, which yields the formula in the theorem statement.
\[\g(S(a,b))\geq\frac{(a-1)(b-1)}{2}+\int_{n=0}^{a-1} f(n) dn\]

As for the upper bound on $\g(S(a,b))$, we use the same process, but applying the upper Gauss bound on $\mu(n)$ from Theorem \ref{GaussBound}.  Hence
\begin{align*}
\g(S(a,b))&=\frac{(a-1)(b-1)}{2}+\sum_{n=1}^{a-1} \mu(n) \\
&\leq \frac{(a-1)(b-1)}{2}+\sum_{n=1}^{a-1} 3f\left(\frac{n}{3}\right) \\
&\leq \frac{(a-1)(b-1)}{2}+\int_{n=1}^{a} 3f\left(\frac{n}{3}\right) dn
\end{align*}
which yields the upper bound in the theorem statement.
\end{proof}

As a consequence of these bounds, we can once again use asymptotic \linebreak Bachmann-Landau notation to say the following.

\begin{corollary}
For all $S(a,b)\in\mathcal{Q}_0^\infty$, $\g(S(a,b))=\Theta(a^{3/2}+ab)$.
\end{corollary}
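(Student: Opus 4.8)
The plan is to read the result straight off the two-sided bound on $\g(S(a,b))$ established in the previous theorem, since the two explicit bounds given there are each of order $a^{3/2}+ab$. Before invoking them, I would dispose of the cases the bounding theorem does not cover. The trivial semigroups ($a\leq 1$ or $b=0$, where $S=\mathbb{N}_0$) are excluded by the paper's standing convention, and the eight exceptional pairs $(a,b)$ form a finite set in which $a$ is bounded (all have $a\leq 79$); neither affects a $\Theta$ statement, which describes the behavior as $a\to\infty$. So it suffices to treat the pairs with $a\geq 2$ and $b\geq 1$ to which the bounds apply.

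For the upper direction I would show $\g(S(a,b))=O(a^{3/2}+ab)$ by inspecting each summand of the explicit upper bound: the term $\tfrac{1}{24}\sqrt{3}(8a+3)^{3/2}$ is $\Theta(a^{3/2})$, the remaining constant-and-linear terms $\tfrac{1}{24}(36a-36-11\sqrt{33})$ are $O(a)=O(a^{3/2})$, and $\tfrac12(a-1)(b-1)\leq \tfrac12 ab=O(ab)$. Adding these gives $O(a^{3/2}+ab)$.

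For the lower direction I would first discard the nonnegative term $\tfrac{1}{24}(12a-13)$ (nonnegative for $a\geq 2$) from the explicit lower bound, leaving $\g(S(a,b))\geq \tfrac{1}{24}(8a-7)^{3/2}+\tfrac12(a-1)(b-1)$, whose first summand is already $\Omega(a^{3/2})$. The one delicate point, and the only real obstacle, is the $ab$ contribution, because the factor $(b-1)$ collapses when $b=1$. I would therefore split into cases. If $b=1$, then the target satisfies $a^{3/2}+ab=a^{3/2}+a=\Theta(a^{3/2})$, so $\g(S(a,b))\geq \tfrac{1}{24}(8a-7)^{3/2}=\Omega(a^{3/2})$ already suffices. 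If $b\geq 2$ and $a\geq 2$, then $a-1\geq a/2$ and $b-1\geq b/2$, whence $\tfrac12(a-1)(b-1)\geq \tfrac18 ab=\Omega(ab)$; combining this with $\tfrac{1}{24}(8a-7)^{3/2}=\Omega(a^{3/2})$ through the elementary inequality $\max(x,y)\geq\tfrac12(x+y)$ yields $\g(S(a,b))=\Omega(a^{3/2}+ab)$.

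Combining the two directions gives $\g(S(a,b))=\Theta(a^{3/2}+ab)$. The argument is routine order-of-growth bookkeeping; the point needing care is that the $\Theta$ must hold uniformly in both parameters, so the degenerate case $b=1$ (where the $(a-1)(b-1)$ term of the lower bound vanishes) has to be absorbed into the $a^{3/2}$ term rather than matched against the $ab$ term.
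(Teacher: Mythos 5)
Your proposal is correct and follows exactly the route the paper intends: the corollary is stated as an immediate consequence of the two-sided bounds in the preceding theorem, and the paper offers no further proof. Your write-up is in fact more careful than the paper itself, since you explicitly dispose of the excluded exceptional pairs and the degenerate case $b=1$ where the $(a-1)(b-1)$ term vanishes and the lower bound must be carried by the $a^{3/2}$ term alone.
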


\section{The Embedding Dimension}

Since our numerical semigroups are generated by infinite quadratic sequences, the value of the embedding dimension is non-trivial to establish.  In this section, we will build up a sequence of theorems, culminating in a proof of the exact value of the embedding dimension for all $S(a,b)\in\mathcal{Q}_0^\infty$.

We begin by observing that from Lemma \ref{multembedlemma}, the multiplicity of $S(a,b)$ is $\m(S(a,b))=a$, and so the embedding dimension of $S(a,b)$ is bounded above by $\e(S(a,b))\leq a$.  This fact is also a consequence of the following theorem, which gives a more detailed picture than simply that $\e(S(a,b))\leq a$.

\begin{theorem}\label{notMinGenSetThm}
Let $S(a,b)\in\mathcal{Q}_0^\infty$ and let $y_n=na+{n\choose 2}b$ for all $n\in\mathbb{N}_0$.  If $n > a$, then $y_n$ is not in the minimal set of generators of $S(a,b)$.
\end{theorem}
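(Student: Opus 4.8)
The plan is to prove the statement directly by exhibiting, for every $n>a$, an explicit representation of $y_n$ as a sum of generators $y_i$ with $i\neq n$. We may assume $a\geq 1$, since when $a=0$ we have $S(0,1)=\mathbb{N}_0$, a case already flagged as special. Once such a representation is in hand, non-minimality follows from general principles: the set $A=\{y_0,y_1,y_2,\dots\}$ generates $S(a,b)$, so if $y_n\in\langle A\setminus\{y_n\}\rangle$ then $A\setminus\{y_n\}$ also generates $S(a,b)$; since the minimal system of generators is contained in every generating set, and the $y_i$ are pairwise distinct for $a\geq 1$ (indeed $y_{i+1}-y_i=a+ib\geq 1$, so the sequence is strictly increasing), this forces $y_n$ out of the minimal system.

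So the whole task reduces to one algebraic identity. First I would record the relation ${p+q\choose 2}={p\choose 2}+{q\choose 2}+pq$, which, applied with $p=n-a$ and $q=a$, gives ${n\choose 2}={n-a\choose 2}+{a\choose 2}+a(n-a)$. Substituting into $y_m=ma+{m\choose 2}b$ then yields
\[
y_{n-a}+y_a = na+\left({n\choose 2}-a(n-a)\right)b = y_n-a(n-a)b,
\]
which rearranges to $y_n=y_{n-a}+y_a+a(n-a)b$.

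The one genuinely clever step is recognizing that the correction term is absorbable. A naive split $y_n=y_p+y_q+pq\,b$ for $p+q=n$ always leaves a pure multiple of $b$, and such multiples need not lie in $S(a,b)$; the point of subtracting $y_a$ specifically (rather than an arbitrary $y_q$) is that the resulting correction $a(n-a)b$ is a multiple of $a=y_1$. Explicitly, $a(n-a)b=\bigl(b(n-a)\bigr)y_1$, so
\[
y_n=y_{n-a}+y_a+b(n-a)\,y_1.
\]
Because $n>a\geq 1$, the indices $n-a\geq 1$, $a\geq 1$, and $1$ are all strictly smaller than $n$, and the coefficient $b(n-a)\geq 0$ is a non-negative integer; hence every summand is a generator other than $y_n$, and $y_n\in\langle A\setminus\{y_n\}\rangle$ as required.

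I do not anticipate a real obstacle: the entire difficulty is concentrated in finding the decomposition, after which verification is a routine binomial computation, and the boundary bookkeeping is mild (if $b(n-a)=0$, which can only occur in the degenerate case $b=0$, $a=1$, the identity still reads $y_n=y_{n-a}+y_a$, a sum of two smaller generators). It is also worth noting that this identity illuminates why $a$ is exactly the right cutoff: shifting the index down by precisely the multiplicity $a$ is what collapses the correction term into copies of the smallest generator $y_1$.
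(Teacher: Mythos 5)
Your proof is correct and is essentially the paper's own argument: writing $n=a+k$, the paper derives the identical decomposition $y_n=y_a+y_k+y_1(bk)$, which is your $y_n=y_{n-a}+y_a+b(n-a)\,y_1$. The extra bookkeeping you supply (distinctness of the $y_i$, the $a=0$ and $b=0$ edge cases) is sound but not needed beyond what the paper already does.
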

\begin{proof}
Assume $n>a$. Then, $n=a+k$ for some positive integer $k$. So,
\begin{align*}
y_n&=na+\frac{n(n-1)}{2}b \\
&=(a+k)a+\frac{(a+k)(a+k-1)}{2}b \\
&=a^2+ak+\frac{a^2+k^2+2ak-a-k}{2}b.
\end{align*}
Also, we know that $y_a=a^2+\frac{a^2-a}{2}b$ and $y_k=ak+\frac{k^2-k}{2}b$. Therefore, $y_n=y_a+y_k+abk$. Since $y_1=a$, $y_n=y_a+y_k+y_1(bk)$. Also, $bk \in \mathbb{N}_0$ and $1,a,k<n$, so $y_1,y_a,y_k<y_n$. Therefore, $y_n$ is generated by smaller elements of $S$, so $y_n$ is not in the minimal set of generators of $S$.
\end{proof}

On the other hand, we get a lower bound on the embedding dimension from the following.

\begin{theorem}\label{MinSetGenThm}
Let $S(a,b)\in\mathcal{Q}_0^\infty$ and let $y_n=na+{n\choose 2}b$ for all $n\in\mathbb{N}_0$.  If ${n \choose 2}<a$, then $y_n$ is in the minimal set of generators of $S(a,b)$.
\end{theorem}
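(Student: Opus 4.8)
The plan is to show directly that $y_n$ cannot be written as a sum of two or more generators. Since the generators $y_1,y_2,\dots$ are distinct positive integers (for $a\geq 1$ the sequence $y_n$ is strictly increasing), $y_n$ fails to be a minimal generator precisely when there exist $\lambda_i\in\mathbb{N}_0$ with $\sum_i\lambda_i\geq 2$ and $y_n=\sum_{i\geq 1}\lambda_i y_i$; note any such relation automatically has $\lambda_n=0$, since otherwise cancelling one copy of $y_n$ would write $0$ as a nontrivial sum of positive generators. First I would assume for contradiction that such a relation exists and set $M=\sum_i\lambda_i i$ and $N=\sum_i\lambda_i{i\choose 2}$, so that $(M,N)=\sum_i\lambda_i z_i\in T$ and, applying $\phi_{a,b}$ (Theorem \ref{PhiFunction}), $Ma+Nb=na+{n\choose 2}b$. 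Rearranging gives the key identity $(n-M)a=\left(N-{n\choose 2}\right)b$. Because $\gcd(a,b)=1$ for every $S(a,b)\in\mathcal{Q}_0^\infty$, this forces $a\mid\left(N-{n\choose 2}\right)$ and $b\mid(n-M)$. I would then split into three cases according to the sign of $n-M$.

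The case $M=n$ is handled by superadditivity: here the identity forces $N={n\choose 2}$, but since $\sum_i\lambda_i\geq 2$ with all contributing indices $\geq 1$, the strict superadditivity of ${x\choose 2}$ (the same inequality used in the proof of Theorem \ref{fBound}) gives ${M\choose 2}>N$, i.e.\ ${n\choose 2}>N={n\choose 2}$, a contradiction. The case $M>n$ uses the hypothesis directly: then $(M-n)a=\left({n\choose 2}-N\right)b>0$, so ${n\choose 2}-N$ is a positive multiple of $a$, whence ${n\choose 2}\geq a+N\geq a$, contradicting ${n\choose 2}<a$.

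The remaining case $M<n$ is where the hypothesis and the structure of $T$ combine. Here $N-{n\choose 2}$ is a positive multiple of $a$, so $N\geq{n\choose 2}+a>{n\choose 2}$. On the other hand, since $(M,N)\in T$, the (weak) superadditivity of ${x\choose 2}$ gives ${M\choose 2}\geq N>{n\choose 2}$, and because ${x\choose 2}$ is nondecreasing on $\mathbb{N}_0$ this forces $M>n$, contradicting $M<n$. As all three cases are impossible, no such relation exists and $y_n$ lies in the minimal generating set.

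The main obstacle, and the conceptual heart of the argument, is recognizing that it is not enough to know that $z_n$ is extremal in $T$ (which is exactly what the $M=n$ case records); the projection $\phi_{a,b}$ could in principle realize the value $y_n$ through a different lattice point $(M,N)\neq z_n$, and the two sign cases are precisely what rule this out. The delicate case is $M<n$, since it requires pairing the divisibility bound $N\geq{n\choose 2}+a$ with superadditivity to contradict $M<n$; the hypothesis ${n\choose 2}<a$ and the coprimality $\gcd(a,b)=1$ are each used essentially. I would note that this approach is self-contained given $T$ and $\phi_{a,b}$ and does not require Theorem \ref{notMinGenSetThm}, though that theorem could alternatively be invoked to restrict attention to finitely many generators before running the same case analysis.
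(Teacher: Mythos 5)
Your proof is correct, but it takes a genuinely different route from the paper's. Both arguments start identically: assume a nontrivial representation, rearrange to $(n-M)a=\bigl(N-{n\choose 2}\bigr)b$, and invoke $\gcd(a,b)=1$ to get divisibility. The divergence is in how the sign of $n-M$ is pinned down. The paper restricts the representation to generators $y_1,\dots,y_{n-1}$ (legitimate since the sequence is increasing), multiplies the $b$-equation by $\frac{n-1}{2}$, and adds it to the $a$-equation to produce the identity $\sum_i c_i\frac{i(i-n)}{2}=k\bigl(a+\frac{n-1}{2}b\bigr)$, whose left side is negative precisely because every index satisfies $i<n$; this forces $k\le -1$ and then the same final contradiction ${n\choose 2}\ge a$ that you reach in your $M>n$ case. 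You instead lift to $(M,N)\in T$ and use the superadditivity of ${x\choose 2}$ (strict when $\sum_i\lambda_i\ge 2$) to kill the cases $M=n$ and $M<n$, which lets you allow arbitrary index sets in the representation and ties the argument cleanly to the monoid $T$ and $\phi_{a,b}$ already developed in the paper; the paper's version buys a shorter, purely algebraic computation at the cost of the (implicit) reduction to smaller-indexed generators. One small correction to your closing commentary: in your $M<n$ case you only ever use that $N-{n\choose 2}$ is positive, so neither the hypothesis ${n\choose 2}<a$ nor the full strength of the divisibility is needed there; the hypothesis is used essentially only in the $M>n$ case, exactly as in the paper's endgame. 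Like the paper's proof, your argument tacitly assumes $a,b\ge 1$ (the degenerate semigroups $S(0,1)$, $S(1,b)$, $S(1,0)$ are all $\mathbb{N}_0$ and are excluded elsewhere in the paper), so this is not a defect relative to the original.
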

\begin{proof}
Suppose that ${n \choose 2}<a$ and that $y_n$ is not in the minimal set of generators of $S$. Then there exist some $c_1,c_2,\ldots ,c_{n-1} \in \mathbb{N}_0$ such that
\[ y_n=c_1y_1 + c_2y_2 + \cdots + c_{n-1}y_{n-1}=\sum_{i=1}^{n-1} c_iy_i\]
Plugging in the formula for each $y_i$, we obtain
\[ na+{n \choose 2}b = \sum_{i=1}^{n-1} c_i\left(ia+{i \choose 2}b\right)\]
Rearranging to get all copies of $a$ on the left and all copies of $b$ on the right, we get
\[ \left(n-\sum_{i=1}^{n-1} c_ii\right)a=\left(\left(\sum_{i=1}^{n-1} c_i{i \choose 2}\right)-{n \choose 2}\right)b\]
Since the left hand side is a multiple of $a$, the right hand side must also be a multiple of $a$. Since $a>1$ and $\gcd(a,b)=1$, it must be that $\left(\sum_{i=1}^{n-1} c_i{i \choose 2}\right)-{n \choose 2}$ is a multiple of $a$. Likewise, since the right hand side is a multiple of $b$, the left hand side must also be a multiple of $b$. If $b>1$, then since $\gcd(a,b)=1$, it must be that $n-\sum_{i=1}^{n-1} c_ii$ is a multiple of $b$. On the other hand, if $b=1$, then $n-\sum_{i=1}^{n-1} c_ii$ is a multiple of $b$ by default. Furthermore, in order for the equation to be balanced, these two expressions must be the same multiple of $a$ and $b$ respectively. In other words, there exists some $k \in \mathbb{Z}$ such that
\[ \left(\sum_{i=1}^{n-1} c_i{i \choose 2}\right)-{n \choose 2}=ka\]
\[ n-\sum_{i=1}^{n-1} c_ii=kb\]
If we multiply both sides of the second equation by $\frac{n-1}{2}$, we obtain
\[ {n \choose 2}-\sum_{i=1}^{n-1} c_i\frac{i(n-1)}{2}=\frac{n-1}{2}kb\]
Adding this to the first equation, we get
\[ \left(\sum_{i=1}^{n-1} c_i{i \choose 2}\right)-{n \choose 2}+{n \choose 2}-\sum_{i=1}^{n-1} c_i\frac{i(n-1)}{2}=ka+\frac{n-1}{2}kb\]
Collecting each copy of $c_i$ together and cancelling the ${n \choose 2}$ terms, we get
\[ \sum_{i=1}^{n-1} c_i\left({i \choose 2}-\frac{i(n-1)}{2}\right)=ka+\frac{n-1}{2}kb\]
Simplifying, we obtain
\[ \sum_{i=1}^{n-1} c_i\left(\frac{i(i-1)}{2}-\frac{i(n-1)}{2}\right)=ka+\frac{n-1}{2}kb\]
\[ \sum_{i=1}^{n-1} c_i\left(\frac{i(i-n)}{2}\right)=ka+\frac{n-1}{2}kb\]
Each $i-n$ must be negative, hence the entire left hand side must be negative, since each $c_i\geq 0$ and not all of them can be 0. Therefore,
\[ ka+\frac{n-1}{2}kb<0\]
\[ k\left(a+\frac{n-1}{2}b\right)<0\]
Since $a+\frac{n-1}{2}b$ is clearly positive, we conclude that $k<0$. Recall that
\[ \left(\sum_{i=1}^{n-1} c_i{i \choose 2}\right)-{n \choose 2}=ka\]
Since $k<0$, this implies $k\leq -1$, so
\[ \left(\sum_{i=1}^{n-1} c_i{i \choose 2}\right)-{n \choose 2}\leq -a\]
Rearranging, we obtain
\[ {n \choose 2}\geq a+\sum_{i=1}^{n-1} c_i{i \choose 2}\]
Since every $c_i\geq 0$, this implies ${n \choose 2}\geq a$. However, we began by assuming that ${n \choose 2}<a$. This is a contradiction, so $y_n$ is in the minimal set of generators of $S$ when ${n \choose 2}<a$.
\end{proof}

When the value of $a$ is fixed, we can say a bit more about the set of minimal generators for different values of $b$.

\begin{theorem}\label{bPlusIThm}
Let $S(a,b)\in\mathcal{Q}_0^\infty$ and $S(a,b+i)\in\mathcal{Q}_0^\infty$ for some $i\in\mathbb{N}_0$.  For all $n\in\mathbb{N}_0$, if $na+{n \choose 2}b$ is not in the minimal set of generators of $S(a,b)$, then $na+{n \choose 2}(b+i)$ is not in the minimal set of generators of $S(a,b+i)$.
\end{theorem}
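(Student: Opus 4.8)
The plan is to take a witnessing representation of $y_n$ by strictly smaller generators in $S(a,b)$, reuse the very same coefficients in $S(a,b+i)$, and then patch the resulting discrepancy by adding copies of the generator $a$. I would first dispose of the degenerate situations: if $a\le 1$ (or $b=0$) then $S(a,b)=\mathbb{N}_0$ and the claim is trivial, so assume $a\ge 2$, which together with $\gcd(a,b)=1$ forces $b\ge 1$. Since the generators $y_m=ma+\binom{m}{2}b$ are strictly increasing in $m$ when $a\ge 1$, the hypothesis that $y_n$ is not a minimal generator means $y_n$ lies in the submonoid generated by the other generators, and only indices below $n$ can appear; hence there are $c_1,\dots,c_{n-1}\in\mathbb{N}_0$ with $y_n=\sum_{m=1}^{n-1}c_m y_m$. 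Writing $M=\sum_m c_m m$ and $K=\sum_m c_m\binom{m}{2}$, this says $na+\binom{n}{2}b=Ma+Kb$.

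Next I would transfer the coefficients to $S(a,b+i)$, whose generators are $\bar y_m=ma+\binom{m}{2}(b+i)$. A direct subtraction produces the clean identity
\[\bar y_n-\sum_{m=1}^{n-1}c_m\bar y_m=\Bigl(\binom{n}{2}-K\Bigr)i.\]
Setting $Q=\binom{n}{2}-K$ and using $\bar y_1=a$, the whole problem reduces to showing that $Q$ is a \emph{nonnegative multiple of $a$}: for then $(Q/a)i\in\mathbb{N}_0$ and $\bar y_n=\sum_{m<n}c_m\bar y_m+(Q/a)i\,\bar y_1$ writes $\bar y_n$ as a combination of generators other than itself, so by uniqueness of the minimal generating set $\bar y_n$ is not minimal.

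Divisibility is the easy half: rearranging $na+\binom{n}{2}b=Ma+Kb$ gives $(n-M)a=\bigl(K-\binom{n}{2}\bigr)b$, and since $\gcd(a,b)=1$ this forces $a\mid\bigl(K-\binom{n}{2}\bigr)$, i.e. $a\mid Q$. The nonnegativity $Q\ge 0$ is the crux, and I expect it to be the main obstacle, since a priori the transferred combination could overshoot $\bar y_n$. I would handle it by contradiction using superadditivity of $\binom{\cdot}{2}$ (the same estimate used in the proof of Theorem~\ref{fBound}): if $Q<0$, i.e. $K>\binom{n}{2}$, then the relation $(n-M)a=\bigl(K-\binom{n}{2}\bigr)b$ has a strictly positive right-hand side, forcing $n>M$ and hence $M\le n-1$; but superadditivity gives $K=\sum_m c_m\binom{m}{2}\le\binom{M}{2}\le\binom{n-1}{2}<\binom{n}{2}$, contradicting $K>\binom{n}{2}$.

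Thus $Q\ge 0$ and $a\mid Q$, which finishes the construction above. Two bookkeeping remarks close the argument: for the degenerate indices $n\le 1$ the conclusion is immediate (when $n\le1$ we have $\binom{n}{2}=0$, and $y_1=a$ is the multiplicity, hence always a minimal generator, so the hypothesis is vacuous), and it is worth noting that the proof never invokes $\gcd(a,b+i)=1$—only $a\ge 2$ and $\gcd(a,b)=1$ are used. The real content is therefore the transfer of a single representation together with the superadditivity bound that keeps the correction term nonnegative.
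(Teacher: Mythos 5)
Your proof is correct and follows essentially the same route as the paper's: transfer the witnessing representation $y_n=\sum_{m<n}c_m y_m$ to $S(a,b+i)$ with the same coefficients, observe that the discrepancy equals $\left({n\choose 2}-K\right)i$, and absorb it with copies of $\bar{y}_1=a$ after checking that ${n\choose 2}-K$ is a nonnegative multiple of $a$. The only divergence is in how the sign of that correction is established: the paper imports the $k<0$ computation from the proof of Theorem \ref{MinSetGenThm}, while you derive it from superadditivity of ${x\choose 2}$ via $K\le{M\choose 2}\le{n-1\choose 2}<{n\choose 2}$, which is an equally valid and self-contained justification of the same fact.
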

\begin{proof}
Let us call the generators of $S(a,b)$ by the names $y_\ell $ and the generators of $S(a,b+i)$ by the names $y^\prime_\ell $, so that $y_\ell =\ell a+{\ell  \choose 2}b$, and $y^\prime_\ell =\ell a+{\ell  \choose 2}(b+i)$ for all $\ell \in\mathbb{N}_0$. Suppose that $y_n=na+{n \choose 2}b$ is not in the minimal set of generators of $S(a,b)$. Then there exist some $c_1,c_2,\ldots ,c_{n-1}\in\mathbb{N}_0$ such that
\[ y_n=c_1y_1+c_2y_2+\cdots +c_{n-1}y_{n-1}=\sum_{j=1}^{n-1} c_jy_j\]
Plugging in the formula for each $y_\ell $, we obtain
\[ na+{n \choose 2}b=\sum_{j=1}^{n-1} c_j\left(ja+{j \choose 2}b\right)\]
By following the same technique as in the proof of Theorem \ref{MinSetGenThm}, we can move all copies of $a$ to one side and all copies of $b$ to the other. Then both sides with be multiples of both $a$ and $b$, and since $\gcd(a,b)=1$, $\left(\sum_{j=1}^{n-1} c_j{j \choose 2}\right)-{n \choose 2}$ is a multiple of $a$. So,
\[ \left(\sum_{j=1}^{n-1} c_j{j \choose 2}\right)-{n \choose 2}=ka\]
where $k\in\mathbb{Z}$ and $k<0$. Hence
\[ {n \choose 2}=\left(\sum_{j=1}^{n-1} c_j{j \choose 2}\right)-ka\]
Multiply this by $i$ to get
\[ {n \choose 2}i=\left(\sum_{j=1}^{n-1} c_j{j \choose 2}i\right)-kai\]
Now, consider these two equations:
\[ na+{n \choose 2}b=\sum_{j=1}^{n-1} c_j\left(ja+{j \choose 2}b\right)\]
\[ {n \choose 2}i=\left(\sum_{j=1}^{n-1} c_j{j \choose 2}i\right)-kai\]
If we add these two equations together, we get
\[ na+{n \choose 2}b+{n \choose 2}i=\left(\sum_{j=1}^{n-1} c_j\left(ja+{j \choose 2}b\right)\right)+\left(\sum_{j=1}^{n-1} c_j{j \choose 2}i\right)-kai\]
Collecting terms, we get
\[ na+{n \choose 2}(b+i)=\left(\sum_{j=1}^{n-1} c_j\left(ja+{j \choose 2}(b+i)\right)\right)-kai\]
Replacing with $y^\prime_\ell $ wherever possible, this becomes
\[ y^\prime_n=\left(\sum_{j=1}^{n-1} c_jy^\prime_j\right)-ky^\prime_1i\]
Since $k<0$, this shows that $y^\prime_n$ is a linear combination over $\mathbb{N}_0$ of smaller generators. Hence $y^\prime_n=na+{n \choose 2}(b+i)$ is not in the minimal set of generators for $S(a,b+i)$.
\end{proof}

As a consequence of the previous theorem, the embedding dimension is decreasing on $b$ when $a$ is fixed.

\begin{corollary}
For all $S(a,b_1),S(a,b_2)\in\mathcal{Q}_0^\infty$, if $b_1\leq b_2$, then $\e(S(a,b_1))\geq\e(S(a,b_2))$.
\end{corollary}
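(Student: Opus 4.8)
The plan is to derive this directly from Theorem \ref{bPlusIThm} by a counting argument. First I would set $i := b_2 - b_1$, which is a nonnegative integer since $b_1 \leq b_2$, so that $b_2 = b_1 + i$ and Theorem \ref{bPlusIThm} applies to the pair $S(a,b_1)$ and $S(a,b_1+i) = S(a,b_2)$, both of which lie in $\mathcal{Q}_0^\infty$ by hypothesis. Writing $y_n = na + {n \choose 2}b_1$ and $y'_n = na + {n \choose 2}b_2$ for the respective generators, I would index the two minimal generating sets by the subscripts $n$ at which $y_n$ (respectively $y'_n$) is a minimal generator, and call these index sets $M_1$ and $M_2$. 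Since $\e(S(a,b_1)) = |M_1|$ and $\e(S(a,b_2)) = |M_2|$, it suffices to show $M_2 \subseteq M_1$.

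The inclusion $M_2 \subseteq M_1$ is precisely the contrapositive of Theorem \ref{bPlusIThm}. That theorem says that if $y_n$ is not in the minimal generating set of $S(a,b_1)$, then $y'_n$ is not in the minimal generating set of $S(a,b_2)$; equivalently, $n \notin M_1$ implies $n \notin M_2$, which is exactly the statement $M_2 \subseteq M_1$. Hence $|M_2| \leq |M_1|$, and the desired inequality $\e(S(a,b_1)) \geq \e(S(a,b_2))$ follows immediately.

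The one point that requires care---and the only real obstacle---is justifying that the embedding dimension genuinely equals the cardinality of the index set, that is, that distinct subscripts yield distinct generators so that counting indices is the same as counting generators. For $a,b \geq 1$ the sequence $y_n = na + {n \choose 2}b$ is strictly increasing in $n$, so $n \mapsto y_n$ is injective on $\mathbb{N}$ and the identity element $y_0 = 0$ is never among the generators; the same holds for $y'_n$. By Lemma \ref{multembedlemma} together with Theorem \ref{notMinGenSetThm}, both $M_1$ and $M_2$ are finite subsets of $\{1,2,\dots,a\}$, so their cardinalities are well defined and the counting argument goes through. Finally, the degenerate cases $a \in \{0,1\}$, where $S(a,b) = \mathbb{N}_0$ has embedding dimension $1$ for every admissible $b$, satisfy the inequality trivially.
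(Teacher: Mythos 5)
Your proposal is correct and follows exactly the route the paper intends: the corollary is stated as an immediate consequence of Theorem \ref{bPlusIThm}, and your argument---taking $i=b_2-b_1$, passing to the contrapositive to get the inclusion of index sets $M_2\subseteq M_1$, and comparing cardinalities---is precisely that deduction, with the minor well-definedness details (injectivity of $n\mapsto y_n$, finiteness of the index sets) handled appropriately.
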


So far, we have established the following about the generators $y_n$ of $S(a,b)\in\mathcal{Q}_0^\infty$.  If $n>a$, then $y_n$ is not in the minimal set of generators of $S(a,b)$, from Theorem \ref{notMinGenSetThm}.  If ${n \choose 2}<a$, then $y_n$ is in the minimal set of generators of $S(a,b)$, from Theorem \ref{MinSetGenThm}.  Also, it is obvious that if $n\choose 2$ is a multiple of $a$, then $y_n$ is a multiple of $y_1=a$, so $y_n$ is not in the minimal set of generators of $S(a,b)$.

Thus, it remains only to investigate the generators $y_n$ of $S(a,b)$ where $a<{n\choose 2}\leq {a\choose 2}$ and $n\choose 2$ is not a multiple of $a$.  We will do this in the proof of the next theorem.

\begin{theorem}\label{n choose 2 > a}
For all $S(a,b)\in\mathcal{Q}_0^\infty$ with $a,b\geq 1$, if ${n\choose 2}>a$ and $(a,b,{n\choose 2}\mod a)\neq (29,1,26)$, $(45,1,33)$, $(47,1,44)$, $(50,1,41)$, $(55,1,50)$, $(67,1,53)$, $(73,1,63)$, $(79,1,74)$, then $na+{n\choose 2}b$ is not in the set of minimal generators of $S(a,b)$.
\end{theorem}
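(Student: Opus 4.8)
The plan is to show that, outside the eight excluded triples, the generator $y_n=na+{n \choose 2}b$ is never the smallest element of $S(a,b)$ in its own congruence class modulo $a$; once $y_n$ strictly exceeds the Ap\'ery representative of its class, it is automatically a sum of two nonzero semigroup elements and hence not minimal. First I would write ${n \choose 2}=qa+r$ with $0\leq r<a$, noting that $q\geq 1$ since ${n \choose 2}>a$. If $r=0$, then ${n \choose 2}$ is a multiple of $a$, so $y_n$ is a multiple of $y_1=a$ and is not a minimal generator (as already observed in the paragraph preceding this theorem); hence I may assume $r\geq 1$. Collecting copies of $a$ gives $y_n=(n+qb)a+rb$, so $y_n$ lies in the same class modulo $a$ as the Ap\'ery element $w(r):=\mu_{a,b}(r)a+rb$, which by Theorem \ref{AperyS(a,b)} is the least element of $S(a,b)$ in that class.

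Next I would measure the gap between $y_n$ and $w(r)$. By Theorem \ref{mu(a,b) of n+ia} we have $\mu_{a,b}(r)=\mu_{a,b}\!\left({n \choose 2}\right)+qb$, and since $y_n\in S(a,b)$ the definition of $\mu_{a,b}$ forces $\mu_{a,b}\!\left({n \choose 2}\right)\leq n$. Therefore
\[
y_n-w(r)=\left(n+qb-\mu_{a,b}(r)\right)a=\left(n-\mu_{a,b}\!\left({n \choose 2}\right)\right)a\geq 0 .
\]
If this quantity is strictly positive, then $y_n=w(r)+ta$ with $t\geq 1$, and since both $w(r)$ and $a$ are nonzero elements of $S(a,b)$, the generator $y_n$ decomposes and is not minimal. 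Because $(a,b,r)$ is assumed non-exceptional, Theorem \ref{mu=mu_a,b} gives $\mu_{a,b}(r)=\mu(r)$, so the entire theorem reduces to a single strict inequality, $\mu(r)<n+qb$, equivalently $\mu_{a,b}\!\left({n \choose 2}\right)<n$, equivalently $(n-1)a+{n \choose 2}b\in S(a,b)$.

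The hard part is this strict inequality, and it is delicate precisely because the margin $n+qb-\mu(r)$ is smallest at the eight excluded triples. The lower bound $\mu(r)\geq f(r)$ of Theorem \ref{fBound} points the wrong way, and the Gauss bound of Theorem \ref{GaussBound} is too loose to force $\mu(r)<n+qb$ in the binding regime $q=b=1$ (where $n+qb=n+1$ and $a<{n \choose 2}<2a$). I would instead apply the sharper combined bound of Theorem \ref{combined bound} to $\mu(r)$ and use $r<a$ together with ${n \choose 2}=qa+r$ to show that $f(r)+3f\!\left(\frac{f(r)-2}{3}\right)<n+qb$ for all $a$ beyond an explicit threshold, exactly in the spirit of Lemma \ref{a<=485 lemma}. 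The finitely many remaining $(a,b,r)$ can then be dispatched by direct computation of $\mu$ via the recursion of Theorem \ref{recursiveExact}; this search returns precisely the eight triples of Theorem \ref{mu=mu_a,b} as the only places where the inequality fails, and at those triples one in fact has $\mu(r)=n+qb+1$ while at every other triple $\mu(r)\leq n+qb-1$. This establishes $y_n>w(r)$ in every non-exceptional case, and with it the claim that $y_n$ is not a minimal generator.
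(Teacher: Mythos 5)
Your reduction is sound and is essentially the contrapositive of the paper's key step: you show that if $y_n$ strictly exceeds the Ap\'ery representative $w(r)=\mu_{a,b}(r)a+rb$ of its class, then $y_n=w(r)+ta$ with $t\geq 1$ decomposes and is not minimal, whereas the paper argues that a minimal generator must lie in $\Ap(S(a,b),a)$ and hence must \emph{equal} $w(r)$, forcing $\mu_{a,b}(r)=n+kb$. The bookkeeping with Theorem \ref{mu(a,b) of n+ia}, the passage to $\mu$ via Theorem \ref{mu=mu_a,b}, and the plan to bound $a$ by combining Theorem \ref{fBound} with Theorem \ref{combined bound} are all correct and match the paper's strategy.

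The gap is in your final computational claim. It is not true that the strict inequality $\mu(r)<n+qb$ fails only at the eight exceptional triples: it also fails, with \emph{equality} $\mu(r)=n+qb$, at thirty further pairs with $b=q=1$, beginning with $(a,n)=(10,6)$, $(13,7)$, $(19,9)$, and running up to $(236,31)$. For instance, in $S(10,1)$ one has ${6\choose 2}=15=1\cdot 10+5$, $\mu(5)=7=n+qb$, so $y_6=75$ \emph{is} the least element of its congruence class modulo $10$ and lies in $\Ap(S(10,1),10)$. Your criterion is therefore silent there: membership in the Ap\'ery set does not imply membership in the minimal generating set, and indeed $y_6=2y_2+y_3$ is still decomposable, but only as a sum of three smaller generators drawn from different nonzero residue classes, which your argument cannot detect. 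The paper must handle all thirty such pairs by exhibiting explicit $\mathbb{N}_0$-linear decompositions one by one, and this table is an essential part of the proof, not an empty case. (For $b\geq 2$ these pairs are rescued by $n+qb\geq n+2>\mu(r)$, so the residual work really is confined to $b=1$; but as written your proposal asserts the equality cases do not exist, and without supplying those thirty decompositions the proof is incomplete.)
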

\begin{proof}
Let $S(a,b)\in\mathcal{Q}_0^\infty$ with $a,b\geq 1$.  Let $y_n$ denote the generators $y_n=na+{n \choose 2}b$ for all $n\in\mathbb{N}_0$.  Suppose that ${n\choose 2}>a$ and $(a,b,{n\choose 2}\mod a)\neq (29,1,26)$, $(45,1,33)$, $(47,1,44)$, $(50,1,41)$, $(55,1,50)$, $(67,1,53)$, $(73,1,63)$, $(79,1,74)$ and that $y_n$ is in the minimal set of generators of $S(a,b)$.

We already know that if $n>a$ or $n\choose 2$ is a multiple of $a$, then $y_n$ is not in the minimal set of generators of $S(a,b)$.  Hence $a<{n\choose 2}\leq {a\choose 2}$ and $n\choose 2$ is not a multiple of $a$.  Then there exist $j,k\in\mathbb{N}_0$ such that ${n\choose 2}=ka+j$ and $k\geq 1$ and $1\leq j\leq a-1$.  Note that $j={n\choose 2}\mod a$.  Then
\[y_n=na+{n\choose 2}b=na+(ka+j)b=(n+kb)a+jb\]

The Ap\'ery set $\Ap(S(a,b),a)$, together with $a$, forms a generating set for $S(a,b)$, so it must contain the minimal generating set of $S(a,b)$.  Hence $y_n\in\Ap(S(a,b),a)$.  We established in Theorem \ref{AperyS(a,b)} that 
\[\Ap(S(a,b),a)=\Set{\mu_{a,b}(j)a+jb | j=0,1,2,\dots,a-1}\]
Hence, $y_n=\mu_{a,b}(j)a+jb$, and so $\mu_{a,b}(j)=n+kb$.

Furthermore, since we have assumed that $(a,b,{n\choose 2}\mod a)=(a,b,j)\neq (29,1,26)$, $(45,1,33)$, $(47,1,44)$, $(50,1,41)$, $(55,1,50)$, $(67,1,53)$, $(73,1,63)$, $(79,1,74)$, Theorem \ref{mu=mu_a,b} tells us that $\mu_{a,b}(j)=\mu(j)$, hence $\mu(j)=n+kb$.

We know from the Gauss bound (Theorem \ref{GaussBound}) that $\mu(j)\leq 3f\left(\frac{j}{3}\right)$.  Hence $n+kb\leq 3f\left(\frac{j}{3}\right)$.  We can also deduce from ${n\choose 2}=ka+j$ that $n=f(ka+j)$ and plug this into the inequality to get $f(ka+j)+kb\leq 3f\left(\frac{j}{3}\right)$.  Since $k,b\geq 1$, we can loosen this inequality to say that $f(ka+j)+1\leq 3f\left(\frac{j}{3}\right)$.  Now we will isolate $k$ from this inequality.
\begin{align*}
    f(ka+j)+1&\leq 3f\left(\frac{j}{3}\right) \\
    \frac{1+\sqrt{8(ka+j)+1}}{2}+1&\leq \frac{3\left(1+\sqrt{\frac{8j}{3}+1}\right)}{2} \\
    3+\sqrt{8(ka+j)+1}&\leq 3\left(1+\sqrt{\frac{8j}{3}+1}\right) \\
    \sqrt{8(ka+j)+1}&\leq 3\sqrt{\frac{8j}{3}+1} \\
    8(ka+j)+1 &\leq 9\left(\frac{8j}{3}+1\right) \\
    8ka+8j+1 &\leq 24j+9 \\
    8ka &\leq 16j+8 \\
    ka &\leq 2j+1
\end{align*}

Recall that $j\leq a-1$, hence $ka\leq 2(a-1)+1=2a-1$, which implies $k<2$.  Since we already know $k\geq 1$, we must have $k=1$.

With the value of $k$ now known, the earlier equation $\mu(j)=n+kb$ becomes $\mu(j)=n+b$, or, when rearranged, $b=\mu(j)-n$.  This seemingly innocuous statement is actually quite powerful, because the right hand side, $\mu(j)-n$, depends only on the values of $n$ and $a$ (recall that $j={n \choose 2}\mod a$), not on the value of $b$.

Thus, we conclude that, for a given, fixed value of $a$, if $a<{n\choose 2}\leq {a\choose 2}$ and $n\choose 2$ is not a multiple of $a$, then there is at most one value of $b$ for which $y_n$ is in the minimal generating set of $S(a,b)$.

There are two cases.  If $y_n$ is in the minimal set of generators of $S(a,1)$, then $y_n$ is not in the minimal set of generators for any other $b\geq 2$, due to the considerations in the last paragraph.  On the other hand, if $y_n$ is not in the minimal set of generators of $S(a,1)$, then, due to Theorem \ref{bPlusIThm}, we still have that $y_n$ is not in the minimal set of generators of $S(a,b)$ where $b\geq 2$.

Now let us consider what happens when $b=1$.  Recall from earlier that we know $k=1$, so the earlier equation $\mu(j)=n+kb$ becomes $\mu(j)=n+1$.  Also, $n=f(ka+j)$ becomes $n=f(a+j)$.  Hence $f(a+j)+1=\mu(j)$.  Since $j\geq 1$, we can apply the combined bound (Theorem \ref{combined bound}) to $\mu(j)$ to get
\[f(a+j)+1\leq f(j)+3f\left(\frac{f(j)-2}{3}\right)\]
Rearranged, this is
\[1\leq f(j)-f(a+j)+3f\left(\frac{f(j)-2}{3}\right)\]

Recall that we saw the quantity on the right-hand side of this inequality in the proof of Lemma \ref{a<=485 lemma}, although we had $n$ in place of $j$ in that proof.  At that time, we showed that it is increasing on $j$.  Hence, since we know $j\leq a-1$,
\[1\leq f(a-1)-f(2a-1)+3f\left(\frac{f(a-1)-2}{3}\right)\]

Recall that we also saw the quantity on the right-hand side of this inequality in the proof of Lemma \ref{a<=485 lemma}, and we defined it as the function $g(a)$, which is shown in Figure \ref{gPic} (located in the proof of Lemma \ref{a<=485 lemma}).

In order to determine what values of $a$ will satisfy $g(a)\geq 1$, we need to solve $g(a)=1$.  This equation can be converted to a fourth-degree polynomial, and solved exactly.  However, the solution would fill several pages, as would the solutions themselves, so we have decided to omit that work.  The solutions can be checked with any standard computer algebra system.  There are two real solutions, which are $a\approx 1.55$ and $a\approx 655.24$.  We can see from the graph of $g(a)$ that there is at least one value of $a>655$ for which $g(a)<1$, hence, we must have $a\leq 655$ in order to have $g(a)\geq 1$.

Since we have an efficient algorithm for calculating the values of $\mu$ (Theorem \ref{recursiveExact}), we can simply check all values of $1\leq n\leq a\leq 655$ to see which pairs $(a,n)$ satisfy the equation
\[n+1=\mu\left({n \choose 2}\mod a\right).\]

After performing this check, we found that the only pairs $(a,n)$ with $1\leq n\leq a\leq 655$ and satisfying the above equation are those shown in the table below.  However, as shown in the third column, $y_n$ is not in the minimal set of generators of $S(a,1)$ in any of these cases, because it can be written as a linear combination over $\mathbb{N}_0$ of smaller generators.

\begin{center}
\begin{tabular}{ |l|l|l| } 
 \hline
 $a$ & $n$ & $y_n=c_1 y_1+c_2 y_2+\cdots$ \\ 
 \hline\hline
10 & 6 & $y_6=2y_2+y_3$ \\ \hline
13 & 7 & $y_7=2y_2+y_4$ \\ \hline
19 & 9 & $y_9=2y_2+y_6$ \\ \hline
22 & 9 & $y_9=y_2+y_3+y_5$ \\ \hline
26 & 10 & $y_{10}=y_2+y_3+y_6$ \\ \hline
34 & 12 & $y_{12}=y_2+y_3+y_8$ \\ \hline
40 & 12 & $y_{12}=y_2+y_5+y_6$ \\ \hline
43 & 13 & $y_{13}=y_2+y_4+y_8$ \\ \hline
53 & 15 & $y_{15}=y_2+y_4+y_{10}$ \\ \hline
58 & 14 & $y_{14}=2y_2+y_3+y_8$ \\ \hline
61 & 15 & $y_{15}=y_2+y_6+y_8$ \\ \hline
64 & 15 & $y_{15}=2y_2+y_3+y_9$ \\ \hline
66 & 16 & $y_{16}=y_3+y_4+y_{10}$ \\ \hline
70 & 16 & $y_{16}=2y_2+y_3+y_{10}$ \\ \hline
78 & 18 & $y_{18}=y_3+y_4+y_{12}$ \\ \hline
82 & 18 & $y_{18}=2y_2+y_3+y_{12}$ \\ \hline
83 & 17 & $y_{17}=2y_2+y_4+y_{10}$ \\ \hline
90 & 18 & $y_{18}=2y_2+y_4+y_{11}$ \\ \hline
97 & 19 & $y_{19}=2y_2+y_4+y_{12}$ \\ \hline
104 & 20 & $y_{20}=2y_2+y_4+y_{13}$ \\ \hline
106 & 21 & $y_{21}=y_3+y_5+y_{14}$ \\ \hline
107 & 21 & $y_{21}=2y_4+y_{14}$ \\ \hline
118 & 22 & $y_{22}=2y_2+y_4+y_{15}$ \\ \hline
142 & 24 & $y_{24}=y_2+y_8+y_{15}$ \\ \hline
181 & 27 & $y_{27}=2y_2+y_6+y_{18}$ \\ \hline
184 & 27 & $y_{27}=y_2+y_3+y_5+y_{18}$ \\ \hline
190 & 28 & $y_{28}=2y_2+y_6+y_{19}$ \\ \hline
193 & 28 & $y_{28}=y_2+y_3+y_5+y_{19}$ \\ \hline
226 & 30 & $y_{30}=y_2+y_3+y_6+y_{20}$ \\ \hline
236 & 31 & $y_{31}=y_2+y_3+y_6+y_{21}$ \\ \hline
\end{tabular}
\end{center}

The theorem is thus proved by contradiction.
\end{proof}

In the next theorem, we will cover the eight exceptional pairs $(a,b)$ that were excluded in the hypothesis of the previous theorem.

\begin{theorem}\label{n choose 2 > a exceptions}
For all $S(a,b)\in\mathcal{Q}_0^\infty$ with $a,b\geq 1$, if $(a,b)= (29,1)$, $(45,1)$, $(47,1)$, $(50,1)$, $(55,1)$, $(67,1)$, $(73,1)$, or $(79,1)$, then there is exactly one $n\in\mathbb{N}$ such that ${n\choose 2}>a$ and $na+{n\choose 2}b$ is in the set of minimal generators of $S(a,b)$.
\end{theorem}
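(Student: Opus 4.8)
The plan is to prove the theorem in two halves—existence and uniqueness—after first pinning down the unique candidate generator. Fix one of the eight pairs $(a,b)=(a,1)$ and let $r$ be the third coordinate of the matching triple in Theorem~\ref{mu=mu_a,b}, so that $\mu_{a,b}(r)=\mu(r)-1$. Let $w=\mu_{a,b}(r)a+rb$ be the associated Apéry element; by Theorem~\ref{AperyS(a,b)} we have $w\in\Ap(S(a,b),a)$, and the table in the proof of Theorem~\ref{mu=mu_a,b} shows that $w=y_m$ for a specific index $m$ (for example $m=11$ when $a=29$). For each of the eight cases one checks directly that ${m\choose 2}>a$. I claim this $y_m$ is the unique generator with ${n\choose 2}>a$ lying in the minimal generating set.

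For uniqueness, suppose $y_n$ is a minimal generator with ${n\choose 2}>a$. The contrapositive of Theorem~\ref{n choose 2 > a} forces $(a,b,{n\choose 2}\bmod a)$ to be one of the eight triples; since $(a,b)$ is fixed and each pair appears in exactly one triple, we must have ${n\choose 2}\equiv r\pmod a$. Then $y_n$ lies in the same congruence class modulo $a$ as $w$, namely $rb\bmod a$, and since $w$ is the least element of $S(a,b)$ in that class we get $y_n\ge w$. If $y_n>w$, then $y_n-w$ is a positive multiple of $a=y_1$, so $y_n=w+ty_1$ with $t\ge 1$ exhibits $y_n$ as a sum of strictly smaller generators, contradicting minimality. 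Hence $y_n=w=y_m$, and since $y_n$ is strictly increasing in $n$ this gives $n=m$.

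For existence I would use the standard reduction that a nonzero element of $\Ap(S(a,b),a)$ is a minimal generator if and only if it is not a sum of two nonzero elements of $\Ap(S(a,b),a)$; this holds because in any decomposition $w=s_1+s_2$ with $s_i\in S(a,b)\setminus\{0\}$ neither $s_i$ can be a multiple of $a$ (that would contradict minimality of $w$ in its class), so each $s_i$ dominates, and may be replaced by, the Apéry element of its class. Suppose then for contradiction that $w=w_p+w_q$ with $w_p=\mu_{a,b}(p)a+pb$ and $w_q=\mu_{a,b}(q)a+qb$ nonzero Apéry elements, $p,q\in\{1,\dots,a-1\}$. Reducing modulo $a$ and using $\gcd(a,b)=1$ gives $p+q\equiv r\pmod a$, so either $p+q=r$ or $p+q=r+a$; in both cases $p,q\neq r$ (else the other index would be $0$ or $a$), so $\mu_{a,b}(p)=\mu(p)$ and $\mu_{a,b}(q)=\mu(q)$ by Theorem~\ref{mu=mu_a,b}. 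When $p+q=r$, matching the coefficients of $a$ gives $\mu(r)-1=\mu(p)+\mu(q)$, which contradicts the subadditivity bound $\mu(p)+\mu(q)\ge\mu(p+q)=\mu(r)$ of Theorem~\ref{recursiveBounds}.

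The hard part will be the wrap-around case $p+q=r+a$, where matching coefficients yields $\mu(p)+\mu(q)=\mu_{a,b}(r)-b\le\mu(r)-2$. To rule this out I would combine three ingredients: the lower bound $\mu(p)+\mu(q)\ge f(p)+f(q)$ from Theorem~\ref{fBound}; the concavity of $f$, which for fixed $p+q=r+a$ with $r+1\le p,q\le a-1$ forces $f(p)+f(q)\ge f(a-1)+f(r+1)\ge f(r)+f(r+1)$; and the Gauss bound $\mu(r)\le 3f(r/3)$ from Theorem~\ref{GaussBound}. These combine to $f(r)+f(r+1)+2\le 3f(r/3)$, which a direct estimate with $f(x)=\tfrac12\bigl(1+\sqrt{8x+1}\bigr)$ shows is false for all $r\ge 1$, since the dominant term $\sqrt{8r+1}$ on the left already exceeds $\tfrac{\sqrt 3}{2}\sqrt{8r+3}$ on the right. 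This contradiction shows $w$ is not a sum of two nonzero Apéry elements, so $y_m$ is a minimal generator, completing existence. As a fallback, since $a\le 79$ in every case, each of the eight claims could instead be verified by directly computing $\Ap(S(a,b),a)$ via Theorem~\ref{recursiveExact}.
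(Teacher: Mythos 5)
Your proposal is correct, and it takes a genuinely different route from the paper. The paper's proof is essentially computational on both halves: for ``at most one'' it enumerates every pair $(a,n)$ with $a<{n\choose 2}\leq{a\choose 2}$ and ${n\choose 2}\bmod a$ equal to the exceptional residue, and exhibits an explicit decomposition $y_n=c_1y_1+c_2y_2+\cdots$ for each candidate except one per value of $a$; for ``at least one'' it asserts a brute-force check over all coefficient vectors $c_i\leq y_n/y_i$ showing no decomposition of the remaining candidate exists. You replace both halves with structural arguments. Your uniqueness argument (any minimal generator $y_n$ with ${n\choose 2}>a$ is forced by Theorem \ref{n choose 2 > a} into the congruence class $rb\bmod a$, hence dominates the Ap\'ery element $w$ of that class, hence equals $w$ or decomposes as $w+ty_1$) is cleaner and entirely avoids the paper's table of decompositions. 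Your existence argument uses the standard criterion that a nonzero Ap\'ery element is a minimal generator iff it is not a sum of two nonzero Ap\'ery elements, then kills the case $p+q=r$ by subadditivity of $\mu$ and the wrap-around case $p+q=r+a$ by combining the lower bound $\mu\geq f$, concavity of $f$, and the Gauss bound; I checked the resulting inequality $f(r)+f(r+1)+2\leq 3f(r/3)$ and it does indeed fail for all $r\geq 1$ (squaring gives $8r+1>\tfrac{3}{4}(8r+3)$, i.e.\ $8r>5$), so the contradiction is genuine. What your approach buys is a human-verifiable proof with no unverifiable computer search; what the paper's approach buys is that the explicit decompositions double as certificates and require no new inequalities. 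Two small points to make explicit if you write this up: you should note that $r\leq a-2$ in all eight cases (so the wrap-around interval $[r+1,a-1]$ makes sense), and that $p,q\neq r$ in both cases so Theorem \ref{mu=mu_a,b} really does give $\mu_{a,b}(p)=\mu(p)$ and $\mu_{a,b}(q)=\mu(q)$ --- both of which you have essentially already observed.
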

\begin{proof}
Let $S(a,b)\in\mathcal{Q}_0^\infty$ with $a,b\geq 1$.  Let $y_n$ denote the generators $y_n=na+{n \choose 2}b$ for all $n\in\mathbb{N}_0$.  Suppose that $(a,b)= (29,1)$, $(45,1)$, $(47,1)$, $(50,1)$, $(55,1)$, $(67,1)$, $(73,1)$, or $(79,1)$.

We know from the previous theorem that if ${n\choose 2}>a$ and $(a,b,{n\choose 2}\mod a)\neq (29,1,26)$, $(45,1,33)$, $(47,1,44)$, $(50,1,41)$, $(55,1,50)$, $(67,1,53)$, $(73,1,63)$, $(79,1,74)$, then $y_n$ is not in the minimal set of generators of $S(a,b)$.  We also know from Theorem \ref{notMinGenSetThm} that if $n>a$, then $y_n$ is not in the minimal set of generators.

Thus, suppose that $a<{n\choose 2}\leq {a\choose 2}$ and $(a,b,{n\choose 2}\mod a)= (29,1,26)$, $(45,1,33)$, $(47,1,44)$, $(50,1,41)$, $(55,1,50)$, $(67,1,53)$, $(73,1,63)$, or $(79,1,74)$.  We can find all pairs $(a,n)$ that meet this description with a straightforward computation.  They are shown in the table below.  For those values of $(a,n)$ for which we found a way to write $y_n$ as a linear combination over $\mathbb{N}_0$ of smaller generators, we have also listed that.

\begin{center}
\begin{tabular}{ |l|l|l| } 
 \hline
 $a$ & $n$ & $y_n=c_1 y_1+c_2 y_2+\cdots$ \\ 
 \hline\hline
29 & 11 &  \\ \hline
29 & 19 & $y_{19}=y_1+y_2+y_8+y_{11}$ \\ \hline
45 & 13 &  \\ \hline
45 & 33 & $y_{33}=6y_1+2y_2+y_5+2y_{13}$ \\ \hline
47 & 14 &  \\ \hline
47 & 34 & $y_{34}=11y_1+y_3+2y_{14}$ \\ \hline
50 & 14 &  \\ \hline
50 & 39 & $y_{39}=y_1+y_2+y_3+y_5+y_{10}+2y_{14}$ \\ \hline
55 & 15 &  \\ \hline
55 & 26 & $y_{26}=15y_1+y_{15}$ \\ \hline
55 & 30 & $y_{30}=5y_1+y_5+y_{10}+y_{15}$ \\ \hline
55 & 41 & $y_{41}=y_5+3y_{15}$ \\ \hline
67 & 16 &  \\ \hline
67 & 52 & $y_{52}=10y_1+y_8+3y_{16}$ \\ \hline
73 & 17 &  \\ \hline
73 & 57 & $y_{57}=
9y_1+2y_2+y_3+y_6+3y_{17}$ \\ \hline
79 & 18 &  \\ \hline
79 & 62 & $y_{62}=y_6+4y_{18}$ \\ \hline

\end{tabular}
\end{center}

Hence, for all $S(a,b)$ where $(a,b)= (29,1)$, $(45,1)$, $(47,1)$, $(50,1)$, $(55,1)$, $(67,1)$, $(73,1)$, or $(79,1)$, there is at most one $n\in\mathbb{N}$ such that ${n\choose 2}>a$ and $na+{n\choose 2}b$ is in the set of minimal generators of $S(a,b)$.

We will now show that for $(a,n)=(29,11)$, $(45,13)$, $(47,14)$, $(50,14)$, $(55,15)$, $(67,16)$, $(73,17)$, and $(79,18)$, which in the table above have a blank entry in the third column, $y_n$ is in the minimal set of generators of $S(a,1)$.  We will show this by contradiction.

Suppose that $y_n$ is not in the minimal set of generators of $S(a,1)$, where $(a,n)=(29,11)$, $(45,13)$, $(47,14)$, $(50,14)$, $(55,15)$, $(67,16)$, $(73,17)$, or $(79,18)$.  Then there exist some $c_1,c_2,\dots,c_{n-1}\in \mathbb{N}_0$ such that
\[y_n=c_1 y_1+c_2 y_2+\cdots+c_{n-1} y_{n-1}\]

Then for $i=1,2,\dots,n-1$, $y_n\geq c_i y_i$, hence $c_i\leq y_n/y_i$.  Since there are only a finite number of values possible for each $c_i$, and there are only a finite number of terms in the sum, it is straightforward, although tedious, to check that for each pair $(a,n)$, every choice of $c_1,c_2,\dots,c_{n-1}$ such that $c_i\leq y_n/y_i$ for each $i=1,2,\dots,n-1$ has the property that
\[y_n\neq c_1 y_1+c_2 y_2+\cdots+c_{n-1} y_{n-1}\]

We have omitted the process of checking each choice of coefficients, but, having performed the check, we conclude that for $(a,n)=(29,11)$, $(45,13)$, $(47,14)$, $(50,14)$, $(55,15)$, $(67,16)$, $(73,17)$, or $(79,18)$, $y_n$ is in the minimal set of generators of $S(a,1)$.

\end{proof}

We are now prepared to establish the exact embedding dimension of all numerical semigroups generated by an infinite quadratic sequence with initial term zero.

\begin{theorem}
For all $S(a,b)\in\mathcal{Q}_0^\infty$, if $(a,b)=(29,1)$, $(45,1)$, $(47,1)$, $(50,1)$, $(55,1)$, $(67,1)$, $(73,1)$, or $(79,1)$, then
\[\e(S(a,b))=\lceil f(a)\rceil.\]
For all other values of $a,b\geq 1$,
\[\e(S(a,b))=\lceil f(a)\rceil-1.\]
\end{theorem}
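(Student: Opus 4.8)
The plan is to compute $\e(S(a,b))$, the cardinality of the minimal generating set, by determining exactly which generators $y_n = na + {n \choose 2}b$ are minimal and then counting them. Since the sequence satisfies $y_1 < y_2 < \cdots$ strictly whenever $a \geq 1$, the minimal generating set consists of distinct values $y_n$, so counting it reduces to counting the indices $n \in \mathbb{N}$ for which $y_n$ belongs to the minimal set of generators. First I would assemble the classification theorems of this section into a single dichotomy. For $a, b \geq 1$, Theorem \ref{MinSetGenThm} shows $y_n$ is a minimal generator whenever ${n \choose 2} < a$; the observation that $y_n$ is a multiple of $y_1 = a$ whenever ${n \choose 2}$ is a multiple of $a$ (in particular when ${n \choose 2} = a$) removes those indices; and Theorem \ref{n choose 2 > a}, which internally already absorbs the case $n > a$ through Theorem \ref{notMinGenSetThm}, shows that $y_n$ is not a minimal generator when ${n \choose 2} > a$, outside the eight exceptional triples. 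Hence, away from the exceptional pairs, $y_n$ is a minimal generator precisely when ${n \choose 2} < a$.

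The second step is the counting. By Lemma \ref{flemma} we have ${f(a) \choose 2} = a$, and since ${x \choose 2}$ is strictly increasing for $x \geq 1$, the condition ${n \choose 2} < a$ is equivalent to $n < f(a)$ for $n \in \mathbb{N}$. Thus $\e(S(a,b))$ equals the number of positive integers strictly less than $f(a)$, and I would show this equals $\lceil f(a) \rceil - 1$ in both subcases: if $f(a) \notin \mathbb{Z}$ the count is $\lfloor f(a) \rfloor = \lceil f(a) \rceil - 1$, while if $f(a) \in \mathbb{Z}$ (that is, $a$ is a triangular number) the count is $f(a) - 1 = \lceil f(a) \rceil - 1$. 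This establishes the formula for all non-exceptional $a, b \geq 1$; I would also record the degenerate case $a = 1$, where $S = \mathbb{N}_0$ and $\e = 1 = \lceil f(1) \rceil - 1$, to confirm consistency.

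For the eight exceptional pairs $(a,b)$, the count of minimal generators with ${n \choose 2} < a$ is still $\lceil f(a) \rceil - 1$ by Theorem \ref{MinSetGenThm}, which has no exceptions. Theorem \ref{n choose 2 > a exceptions} then supplies exactly one additional index $n$ with ${n \choose 2} > a$ whose generator $y_n$ is minimal, and this generator is genuinely new because ${n \choose 2} > a$ places it outside the previously counted family. Adding this single generator gives $\e(S(a,b)) = \lceil f(a) \rceil - 1 + 1 = \lceil f(a) \rceil$, as claimed.

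The conceptual work has already been carried out in Theorems \ref{MinSetGenThm}, \ref{n choose 2 > a}, and \ref{n choose 2 > a exceptions}, so the main obstacle is the careful bookkeeping of the counting step. In particular, I expect the delicate point to be handling the boundary: one must verify that the translation of ${n \choose 2} < a$ into $n < f(a)$ is exact, and that the triangular-number edge case, where $f(a)$ is an integer and the index $n = f(a)$ satisfies ${n \choose 2} = a$, is correctly excluded by the ``multiple of $a$'' observation rather than by the strict inequality, so that both subcases collapse to the same formula $\lceil f(a) \rceil - 1$.
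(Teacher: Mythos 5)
Your proposal is correct and follows essentially the same route as the paper: classify the minimal generators as exactly those $y_n$ with ${n\choose 2}<a$ via Theorems \ref{MinSetGenThm} and \ref{n choose 2 > a}, count them as $\lceil f(a)\rceil-1$ by splitting on whether $f(a)$ is an integer, and add one extra generator from Theorem \ref{n choose 2 > a exceptions} in the eight exceptional cases. Your explicit handling of the boundary case ${n\choose 2}=a$ via the ``multiple of $a$'' observation is slightly more careful than the paper's write-up, but the argument is the same.
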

\begin{proof}
We will deal with the eight exceptional cases at the end of the proof, so assume that $(a,b)\neq(29,1)$, $(45,1)$, $(47,1)$, $(50,1)$, $(55,1)$, $(67,1)$, $(73,1)$, or $(79,1)$.

We know from Theorems \ref{notMinGenSetThm}, \ref{MinSetGenThm}, and \ref{n choose 2 > a}, that the minimal set of generators of $S(a,b)$ is the set of generators $y_n$ such that ${n\choose 2} < a$.

We now simply need to count how many values of $n$ meet this description in order to get the embedding dimension.  The number of values will be the largest value of $n$ meeting this description.

We know that for $n\geq 1$, ${n\choose 2}<a$ if and only if $n<f(a)$.  Let $N$ stand for the largest value of $n\in\mathbb{N}$ such that ${n\choose 2} < a$.  There are two cases.  If $f(a)$ is an integer, then $N=f(a)-1=\lceil f(a)\rceil -1$.  On the other hand, if $f(a)$ is not an integer, then $N=\lfloor f(a)\rfloor=\lceil f(a)\rceil -1$.  In either case, this tells us that the number of elements $n\in\mathbb{N}$ such that ${n\choose 2}<a$ is $\lceil f(a)\rceil -1$, hence, $\e(S(a,b))=\lceil f(a)\rceil -1$.

Now let us consider those possible eight exceptions, and assume that $(a,b)=(29,1)$, $(45,1)$, $(47,1)$, $(50,1)$, $(55,1)$, $(67,1)$, $(73,1)$, or $(79,1)$.  From Theorems \ref{notMinGenSetThm}, \ref{MinSetGenThm}, and \ref{n choose 2 > a}, the minimal set of generators of $S(a,b)$ contains the generators $y_n$ such that ${n\choose 2} < a$, and Theorem \ref{n choose 2 > a exceptions} tells us that there is exactly one element $y_n$ in the minimal set of generators such that ${n\choose 2} > a$.  Hence, using the same counting arguments as before, $\e(S(a,b))=\lceil f(a)\rceil$.

\end{proof}

\section{Conclusions and Future Work}

In this article, we have investigated all numerical semigroups generated by an infinite quadratic sequence with initial term zero.  We have shown a computationally efficient way to calculate the Ap\'ery set, and given bounds on the elements of the Ap\'ery set, which led to bounds on the genus and the Frobenius number.  With those bounds, we were able to find the asymptotic behavior of the genus and the Frobenius number in terms of the coefficients of the quadratic sequence.  Furthermore, we determined the exact embedding dimension for all such numerical semigroups.

However, there still remain many interesting and unanswered questions.  For numerical semigroups generated by an infinite quadratic sequence with initial term zero, there is still much that could be done.  We could tighten the bounds on $\mu$, or on the Frobenius number, the genus, or the embedding dimension.  We could investigate other associated sets or parameters that we did not mention here, such as the pseudo-Frobenius numbers or the type.

Furthermore, there is much yet to be done in the area of quadratic sequences more generally.  We could investigate infinite quadratic sequences which begin with initial terms other than zero, or we could investigate the effect of using finite numbers of generators.

Lastly, and most importantly, the realm of polynomial sequences of higher degree remains almost completely unexplored.  In this article, we have planted our flag on the edge of that terrain, but an infinite expanse yet remains to be surveyed.

\printbibliography

\end{document}